\def\eqref#1{equation~\ref{#1}}
\def\1{\bm{1}}
\DeclareMathAlphabet{\mathsfit}{\encodingdefault}{\sfdefault}{m}{sl}
\SetMathAlphabet{\mathsfit}{bold}{\encodingdefault}{\sfdefault}{bx}{n}
\def\gO{{\mathcal{O}}}
\newcommand{\E}{\mathbb{E}}
\newcommand{\R}{\mathbb{R}}
\theoremstyle{plain}
\newtheorem{theorem}{Theorem}
\newtheorem{lemma}{Lemma}
\newtheorem{assumption}{Assumption}
\newtheorem{proposition}{Proposition}
\theoremstyle{definition}
\newtheorem{remark}{Remark}
\title{A stochastic first-order method with multi-extrapolated momentum for highly smooth unconstrained optimization}
\author{
Chuan He\thanks{Department of Mathematics, Link\"oping University, Sweden (email: {\tt chuan.he@liu.se}). This work was partially supported by the Wallenberg AI, Autonomous Systems and Software Program (WASP) funded by the Knut and Alice Wallenberg Foundation.}
}
\date{
\today
}
\begin{document}
\maketitle
	
\begin{abstract}
In this paper, we consider unconstrained stochastic optimization problems in which the objective function exhibits higher-order smoothness. In particular, we propose a stochastic first-order method (SFOM) with multi-extrapolated momentum, where multiple extrapolations are performed in each iteration, followed by a momentum update based on these extrapolations. We show that the proposed SFOM can accelerate optimization by exploiting the higher-order smoothness of the objective function $f$. Assuming that the $p$th-order derivative of $f$ is Lipschitz continuous for some $p\ge2$, and under additional mild assumptions, we establish that our method achieves a sample and first-order operation complexity of $\widetilde{\gO}(\epsilon^{-(3p+1)/p})$ for finding an $\epsilon$-stochastic stationary point $x$ that satisfies $\E[\|\nabla f(x)\|]\le\epsilon$. To the best of our knowledge, this is the first SFOM to leverage the Lipschitz continuity of arbitrarily higher-order derivatives of the objective function for acceleration.
Preliminary numerical experiments validate the practical performance of our method and support our theoretical results.
\end{abstract}

\noindent{\small\textbf{Keywords:} Unconstrained optimization, higher-order smoothness, stochastic first-order method, extrapolation, momentum, sample complexity, first-order operation complexity}

\medskip

\noindent{\small {\bf Mathematics Subject Classification} 49M05, 49M37, 90C25, 90C30}

\section{Introduction}\label{sec:intro}
In this paper, we consider the smooth unconstrained optimization problem:
\begin{align}\label{pb:uc}
\min_{x\in\R^n} f(x), 
\end{align}
where $f:\R^n\to\R$ is continuously differentiable and has a Lipschitz continuous $p$th-order derivative for some $p\ge2$ (see \cref{asp:pth-smth} for details). We assume that problem \cref{pb:uc} has at least one optimal solution. Our goal is to develop a first-order method for solving \cref{pb:uc} in the stochastic regime, where the derivatives of $f$ are not directly accessible. Instead, the algorithm relies solely on the stochastic estimator $G(\cdot;\xi)$ for $\nabla f(\cdot)$, which is unbiased and has bounded variance (see Assumption \ref{asp:basic}(c) for details). Here, $\xi$ is a random variable with a sample space $\Xi$.

In recent years, there have been significant developments in stochastic first-order methods (SFOMs) with complexity guarantees for solving problem~\cref{pb:uc}. Notably, when assuming $\nabla f$ is Lipschitz continuous (see \cref{asp:basic}(b)), SFOMs \cite{cutkosky2020momentum,ghadimi2013stochastic,ghadimi2016accelerated} have been proposed with a sample and first-order operation complexity\footnote{Sample complexity and first-order operation complexity refer to the total number of samples of $\xi$ and the evaluations of $G(\cdot,\cdot)$ throughout the algorithm, respectively.} of $\mathcal{O}(\epsilon^{-4})$ for finding an $\epsilon$-stochastic stationary point $x$ of problem \cref{pb:uc} such that 
\begin{align}\label{expect-eps}
\E[\|\nabla f(x)\|]\le\epsilon,
\end{align}
where $\epsilon\in(0,1)$ is a given tolerance parameter, and the expectation is taken over the randomness in the algorithm. This complexity bound has been proven to be optimal in \cite{arjevani2023lower} without any additional smoothness assumptions beyond the Lipschitz continuity of the gradient. Among these works, \cite{cutkosky2020momentum} proposed a normalized SFOM that incorporates Polyak momentum updates:
\begin{align}\label{polyak-mom}
m^k = (1-\gamma_{k-1}) m^{k-1} + \gamma_{k-1}G(x^k;\xi^k)\qquad\forall k\ge0,     
\end{align}
where $\{x^k\}$ denotes the algorithm iterates, $\{\gamma_k\}$ denotes the momentum parameters, and $\{m^k\}$ denotes the estimators of $\{\nabla f(x^k)\}$. Recently, \cite{gao2024non} proposed an SFOM with Polyak momentum for stochastic composite optimization. As shown in \cite{cutkosky2020momentum,gao2024non}, Polyak momentum induces a variance reduction effect in gradient estimation for SFOMs. 

Several other attempts have been made to improve the sample complexity of SFOMs by leveraging the second-order smoothness of $f$. Assuming that $\nabla^2 f$ is Lipschitz continuous, i.e., \cref{asp:pth-smth} with $p=2$, an SFOM \cite{cutkosky2020momentum} has been proposed with sample and first-order operation complexity of $\mathcal{O}(\epsilon^{-7/2})$ for finding $x$ satisfying \cref{expect-eps}. This method incorporates implicit gradient transport that combines extrapolations with Polyak momentum updates:
\begin{align}\label{extra-p-moment}
z^k = x^k + \frac{1-\gamma}{\gamma}(x^k-x^{k-1}),\quad m^k = (1-\gamma) m^{k-1} + \gamma G(z^k;\xi^k)\qquad\forall k\ge0
\end{align}
with some fixed parameter $\gamma\in(0,1)$. It has been shown in \cite{cutkosky2020momentum} that constructing $\{z^k\}$ through extrapolations and combining them with Polyak momentum achieves faster variance reduction for gradient estimators $\{m^k\}$, leading to an improved overall complexity bound. In addition, a restarted stochastic gradient method has been proposed in \cite{fang2019sharp} to escape saddle points for problem \cref{pb:uc} assuming a Lipschitz continuous Hessian, which achieves a sample and first-order operation complexity of $\mathcal{O}(\epsilon^{-7/2})$ for finding an $(\epsilon,\mathcal{O}(\sqrt{\epsilon}))$-second-order stationary point of problem \cref{pb:uc} with high probability. However, the algorithm input parameters in \cite{cutkosky2020momentum, fang2019sharp} (e.g., step sizes and momentum parameters) require explicit access to the problem-specific parameters (e.g., Lipschitz constants and bounds for noise), and is therefore generally hard to implement in practice. Furthermore, there appear to be no SFOMs that leverage higher-order smoothness of $f$ beyond the Hessian Lipschitz condition for acceleration.

Aside from leveraging the smoothness of $f$ for acceleration, recent work \cite{cutkosky2019momentum,fang2018spider,li2021page} has shown that SFOMs can be accelerated by leveraging the mean-squared smoothness of the stochastic gradient estimator $G(\cdot;\xi)$, which is expressed as
\begin{align}\label{asp:ave-smt}
\E_{\xi}[\|G(y;\xi) - G(x;\xi)\|^2]\le L^2 \|y-x\|^2\qquad\forall x,y\in\R^n  
\end{align}
for some $L>0$. These methods achieve a sample and first-order operation complexity of $\mathcal{O}(\epsilon^{-3})$ for finding an $\epsilon$-stochastic stationary point of \cref{pb:uc}, which has been proven to be optimal in \cite{arjevani2023lower} under the assumption of \cref{asp:ave-smt}. In addition, SFOMs \cite{pham2020proxsarah,tran2022hybrid,wang2019spiderboost,xu2023momentum} have been proposed for stochastic composite optimization problems, achieving a sample and first-order operation complexity of $\mathcal{O}(\epsilon^{-3})$ under mean-squared smoothness condition stated in \cref{asp:ave-smt}. Thus far, the mean-squared smoothness in \cref{asp:ave-smt} appears to be crucial for achieving the complexity of $\mathcal{O}(\epsilon^{-3})$ for finding $x$ that satisfies \cref{expect-eps}. 
On the other hand, it should be noted that the mean-squared smoothness is an assumption imposed on the stochastic gradient estimator $G(\cdot;\xi)$ rather than the function $f$ itself. To illustrate this, we provide an example in \cref{apx:ctexp} that shows an unbiased stochastic gradient estimator $G(\cdot; \xi)$ with bounded variance can violate \cref{asp:ave-smt} when the gradient and arbitrarily higher-order derivatives of $f$ are Lipschitz continuous.

Recently, many studies have also extended the aforementioned acceleration methods for unconstrained optimization to develop accelerated SFOMs for constrained optimization with complexity guarantees. For example, \cite{alacaoglu2024complexity,huang2019faster,li2024stochastic,lu2024variance,shi2025momentum} establish the complexity for equality constrained stochastic optimization problems, while \cite{he2024stochastic} establishes the complexity for conic constrained stochastic optimization. Since the main focus of this paper is on unconstrained optimization, we omit a detailed discussion of these studies.

In this paper, we explore the possibility to accelerate SFOMs for solving problem \cref{pb:uc} by exploiting the higher-order smoothness of the objective function $f$. In particular, we show that SFOMs can achieve acceleration by {\it exploiting arbitrarily higher-order smoothness of $f$} through the introduction of an SFOM with multi-extrapolated momentum (Algorithm \ref{alg:unf-sfom}). 
We demonstrate that for any $p\ge2$, performing $p-1$ separate extrapolations in each iteration and combining them with a Polyak momentum update can accelerate variance reduction by exploiting the smoothness of the $p$th-order derivative of $f$, thereby leading to a sample and first-order operation complexity of $\widetilde{\mathcal{O}}(\epsilon^{-(3p+1)/p})$\footnote{$\widetilde{\mathcal{O}}(\cdot)$ represents ${\mathcal{O}}(\cdot)$ with hidden logarithmic factors.}. Our complexity result significantly generalizes the complexity of $\mathcal{O}(\epsilon^{-4})$ assuming Lipschitz continuous gradients \cite{cutkosky2020momentum, ghadimi2013stochastic, ghadimi2016accelerated}, and the complexity of $\mathcal{O}(\epsilon^{-7/2})$ assuming Lipschitz continuous Hessians \cite{cutkosky2020momentum, fang2019sharp}. Moreover, our complexity analysis is based on establishing a descent relation for the potential functions, which significantly differs from \cite{cutkosky2020momentum}, allowing our algorithm to employ time-varying input parameters (e.g., step sizes and momentum parameters) without explicit knowledge of the problem-specific parameters (e.g., Lipschitz constants and bounds for noise). Consequently, given that the algorithm input parameters in \cite{cutkosky2020momentum} require explicit access to the problem-specific parameters, our algorithm is easier to implement for problems with Lipschitz continuous Hessians.
For ease of comparison, we summarize the sample and first-order operation complexity of several existing SFOMs, along with their associated smoothness assumptions, as well as those of our method, in Table \ref{table:sum-sc}.

\begin{table}[h!]
\centering
\caption{Comparison of sample and first-order operation complexity of several SFOMs and their associated smoothness assumptions in the literature with those of our method for finding an $\epsilon$-stochastic stationary point of \cref{pb:uc}. Here, SG and PM stand for ``stochastic gradient" and ``Polyak momentum", respectively.}
\smallskip
\begin{tabular}{l|l|l}
\hline
Method  & Complexity & Smoothness assumption \\
\hline
SG \cite{ghadimi2013stochastic} & ${\mathcal{O}}(\epsilon^{-4})$ & gradient Lipschitz \\
SG-PM \cite{cutkosky2020momentum} & ${\mathcal{O}}(\epsilon^{-4})$ &  gradient Lipschitz \\
NIGT \cite{cutkosky2020momentum} & ${\mathcal{O}}(\epsilon^{-7/2})$ & gradient \& Hessian Lipschitz \\
{\bf \cref{alg:unf-sfom} (ours)} &  $\widetilde{\mathcal{O}}(\epsilon
^{-(3p+1)/p})$ & gradient \& $p$th-order derivative Lipschitz \\
STORM \cite{cutkosky2019momentum}  &  ${\mathcal{O}}(\epsilon^{-3})$ & mean-squared smooth \\
SPIDER \cite{fang2018spider} & ${\mathcal{O}}(\epsilon^{-3})$ & mean-squared smooth \\
PAGE \cite{li2021page} & ${\mathcal{O}}(\epsilon^{-3})$ & mean-squared smooth \\
\hline
\end{tabular}
\label{table:sum-sc}
\end{table}

The main contributions of this paper are highlighted below.

\begin{itemize}
\item We propose an SFOM with multi-extrapolated momentum (\cref{alg:unf-sfom}). This method the first SFOM to leverage the Lipschitz continuity of {\it arbitrarily higher-order derivatives} of the objective function for acceleration, providing insights for future algorithmic design. 

\item We show that, assuming that the $p$th-order derivative of $f$ is Lipschitz continuous and under other mild assumptions, our proposed method achieves a sample and first-order operation complexity of $\widetilde{\mathcal{O}}(\epsilon^{-(3p+1)/p})$ for finding an $\epsilon$-stochastic stationary point of problem \cref{pb:uc}. This complexity result improves upon the best-known results for SFOMs without assuming mean-squared smoothness.
\end{itemize}

The rest of this paper is organized as follows. In \cref{sec:not-asm}, we introduce some notation, assumptions, and preliminaries that will be used in the paper. In \cref{sec:method}, we propose an SFOM with multi-extrapolated momentum and establish its complexity. \cref{sec:ne} presents preliminary numerical results. 
In \cref{sec:proof,sec:cr}, we present the proofs of the main results, and some concluding remarks, respectively.

\section{Notation, assumptions, and preliminaries}\label{sec:not-asm}

Throughout this paper, let $\R^n$ denote the $n$-dimensional Euclidean space and $\langle\cdot,\cdot\rangle$ denote the standard inner product. We use $\|\cdot\|$ to denote the Euclidean norm of a vector or the spectral norm of a matrix. For any $p\ge1$ and a $p$th-order continuously differentiable function $\varphi$, we denote by $D^p\varphi(x)[h_1,\ldots,h_p]$ the $p$th-order directional derivative of $\varphi$ at $x$ along $h_i\in\R^n$, $1\le i\le p$, and use $D^p \varphi(x)[\cdot]$ to denote the associated symmetric $p$-linear form. For any symmetric $p$-linear form $\mathcal{T}[\cdot]$, we denote its norm as
\begin{align}\label{def:pnorm}
\|\mathcal{T}\|_{(p)}\doteq\max_{h_1,\ldots,h_p}\{\mathcal{T}[h_1,\ldots,h_p]:\|h_i\|\le1,1\le i\le p\}. 
\end{align}
For any $x\in\R^n$ and $h_i\in\R^n$ with $1\le i\le p-1$, we define $\nabla^p\varphi(x)(h_1,\ldots,h_{p-1})\in\R^n$ as follows:
\[
\langle \nabla^p\varphi(x)(h_1,\ldots,h_{p-1}), h_p\rangle \equiv D^p\varphi(x)[h_1,\ldots,h_p]\qquad \forall h_p\in\R^n.
\]
For any $x,h\in\R^n$, we denote $D^p \varphi(x)[h]^{p}\doteq D^p \varphi(x)[h,\ldots,h]$ and $\nabla^p \varphi(x)(h)^{p-1}\doteq\nabla^p \varphi(x)(h,\ldots,h)$. For any $s\in\R$, we let $\mathrm{sgn}(s)$ be $1$ if $s\ge0$ and $-1$ otherwise. In addition, $\widetilde{\mathcal{O}}(\cdot)$ represents $\mathcal{O}(\cdot)$ with logarithmic terms omitted.

We now make the following assumptions throughout this paper.
\begin{assumption}\label{asp:basic}
\begin{enumerate}[{\rm (a)}]
\item There exists a finite $f_{\mathrm{low}}$ such that $f(x)\ge f_{\mathrm{low}}$ for all $x\in\R^n$.    
\item There exists $L_1>0$ such that $\|\nabla f(y) - \nabla f(x)\| \le L_1\|y-x\|$ holds for all $x,y\in\R^n$.
\item The stochastic gradient estimator $G:\R^n\times \Xi\to\R^n$ satisfies 
\begin{align}\label{est-unbias-varbd}
\E_{\xi}[G(x;\xi)]=\nabla f(x),\quad \E_{\xi}[\|G(x;\xi) - \nabla f(x)\|^2]\le\sigma^2\qquad \forall x\in\R^n
\end{align}
for some $\sigma>0$.
\end{enumerate}
\end{assumption}

We now make some remarks on \cref{asp:basic}.

\begin{remark}
Assumptions \ref{asp:basic}(a) and (b) are standard. It follows from \cref{asp:basic}(b) that
\begin{align}\label{ineq:1st-desc}
f(y) \le f(x) + \nabla f(x)^T(y-x) + \frac{L_1}{2}\|y-x\|^2\qquad \forall x,y\in\R^n.    
\end{align}
In addition, \cref{asp:basic}(c) implies that $G(\cdot;\xi)$ is an unbiased stochastic estimator for $\nabla f(\cdot)$ with bounded variance, which is commonly used in stochastic optimization (e.g., see \cite{bottou2018optimization,curtis2017optimization,lan2020first}).
\end{remark}

We also make the following assumption regarding the Lipschitz continuity of $D^pf$ for some $p\ge2$.
\begin{assumption}\label{asp:pth-smth}
The function $f$ is $p$th-order continuously differentiable for some $p\ge2$, and moreover, there exists $L_p>0$ such that $\|D^p f(y) - D^p f(x)\|_{(p)} \le L_p\|y-x\|$ holds for all $x,y\in\R^n$.
\end{assumption}

The following lemma provides a useful inequality under \cref{asp:pth-smth}, and its proof is deferred to \cref{sec:proof-lem12}.

\begin{lemma}\label{lem:pth-desc}
Under \cref{asp:pth-smth}, the following inequality holds:
\begin{align}\label{ineq:pth-desc}
\Big\|\nabla f(y) -\sum_{r=1}^p\frac{1}{(r-1)!}\nabla^r f(x)(y-x)^{r-1}\Big\|\le \frac{L_p}{p!}\|y-x\|^p\qquad\forall x,y\in\R^n.
\end{align}
\end{lemma}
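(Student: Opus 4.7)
\textbf{Proof plan for \cref{lem:pth-desc}.} The inequality is a standard Taylor-remainder estimate, so the plan is to reduce it to a one-dimensional Taylor expansion along the segment from $x$ to $y$ and then apply the $p$th-order Lipschitz hypothesis.

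First I would fix $x,y\in\R^n$ and introduce the auxiliary vector-valued function $g:[0,1]\to\R^n$ defined by $g(t)=\nabla f(x+t(y-x))$. Under \cref{asp:pth-smth}, $g$ is $(p-1)$-times continuously differentiable on $[0,1]$ with
\[
g^{(k)}(t)=\nabla^{k+1}f\bigl(x+t(y-x)\bigr)(y-x)^{k}, \qquad k=0,1,\dots,p-1,
\]
and $g^{(p-1)}$ is Lipschitz in $t$ by the hypothesis on $D^pf$. In particular, $g^{(k)}(0)=\nabla^{k+1}f(x)(y-x)^{k}$, so after the index shift $r=k+1$ the sum appearing in \cref{ineq:pth-desc} is exactly $\sum_{k=0}^{p-1} g^{(k)}(0)/k!$, while the term $\nabla f(y)$ is $g(1)$.

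Next I would apply the Taylor formula with integral remainder of order $p-1$ to $g$:
\[
g(1)=\sum_{k=0}^{p-2}\frac{g^{(k)}(0)}{k!}+\int_{0}^{1}\frac{(1-t)^{p-2}}{(p-2)!}\,g^{(p-1)}(t)\,dt,
\]
which holds (with the convention $(1-t)^0=1$ when $p=2$) because $g^{(p-1)}$ is absolutely continuous. Adding and subtracting $g^{(p-1)}(0)$ inside the integral and using $\int_0^1 (1-t)^{p-2}/(p-2)!\,dt=1/(p-1)!$, I would rewrite this as
\[
g(1)-\sum_{k=0}^{p-1}\frac{g^{(k)}(0)}{k!}=\int_{0}^{1}\frac{(1-t)^{p-2}}{(p-2)!}\bigl[g^{(p-1)}(t)-g^{(p-1)}(0)\bigr]dt.
\]

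Finally, I would bound the integrand using \cref{asp:pth-smth} together with the operator-norm inequality $\|\nabla^p f(u)(h_1,\dots,h_{p-1})-\nabla^pf(v)(h_1,\dots,h_{p-1})\|\le \|D^pf(u)-D^pf(v)\|_{(p)}\prod_i\|h_i\|$ (which follows directly from the dual characterization of $\|\cdot\|_{(p)}$ in \cref{def:pnorm}). Applied with $h_i=y-x$ and $u=x+t(y-x)$, $v=x$, this yields $\|g^{(p-1)}(t)-g^{(p-1)}(0)\|\le L_p t\|y-x\|^{p}$. Plugging this in and computing the resulting Beta integral $\int_0^1(1-t)^{p-2}t\,dt=(p-2)!/p!$ gives the stated bound $L_p\|y-x\|^p/p!$. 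The only mildly delicate point is justifying the Taylor formula at the smoothness level we actually have (Lipschitz $g^{(p-1)}$ rather than $C^p$); this is standard and follows from the fundamental theorem of calculus for absolutely continuous functions, so I do not expect any real obstacle.
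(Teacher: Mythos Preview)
Your proof is correct and follows essentially the same approach as the paper: Taylor expansion with integral remainder along the segment, subtraction of the base-point term, and bounding via the Lipschitz hypothesis on $D^pf$ together with the Beta integral $\int_0^1(1-t)^{p-2}t\,dt=1/(p(p-1))$. The only cosmetic difference is that the paper first dualizes by fixing a unit vector $u$ and working with the scalar function $\phi(\cdot)=\langle\nabla f(\cdot),u\rangle$ before taking the supremum over $u$, whereas you apply Taylor's formula directly to the vector-valued map $t\mapsto\nabla f(x+t(y-x))$.
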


\section{A stochastic first-order method with multi-extrapolated momentum}\label{sec:method}
In this section, we propose an SFOM with multi-extrapolated momentum in \cref{alg:unf-sfom} and establish its sample and first-order operation complexity under Assumptions \ref{asp:basic} and \ref{asp:pth-smth}.

Specifically, at the $k$th iteration, our method performs $q$ separate extrapolations for some $q\ge1$ as in \cref{update-zk} to obtain $q$ points $\{z^{k,t}\}_{1\le t\le q}$, where the extrapolation parameters $\{\gamma_{k-1,t}\}_{1\le t\le q}$ in \cref{update-zk} are chosen to have distinct positive values. Then, a gradient estimator $m^k$ for $\nabla f(x^k)$ is constructed by combining the previous gradient estimator $m^{k-1}$ with the stochastic gradient estimator $G(\cdot;\xi^k)$ evaluated at $\{z^{k,t}\}_{1\le t\le q}$, as described in \cref{update-mk}. To exploit the higher-order smoothness of $f$, the values of the weighting parameters $\{\theta_{k-1,t}\}_{1\le t\le q}$ in \cref{update-mk} need to be determined by solving the linear system in \cref{pth-lss}, with the coefficient matrix constructed using $\{\gamma_{k-1,t}\}_{1\le t\le q}$. 
After obtaining $m^k$, the next iterate $x^{k+1}$ is generated via a normalized update\footnote{Normalized updates are recognized as an important technique in training deep neural networks (see, e.g., \cite{You2020Large}).}, as described in \cref{update-xk}. For ease of understanding our extrapolations and momentum updates, we visualize the updates of $\{z^{k,t}\}_{1\le t\le 3}$ and $m^k$ generated by \cref{alg:unf-sfom} with $q=3$ on a two-dimensional contour plot, as shown in \cref{visual}.

\begin{figure}[ht]
\centering
\begin{minipage}[b]{0.49\linewidth}
\centering
\includegraphics[width=\linewidth]{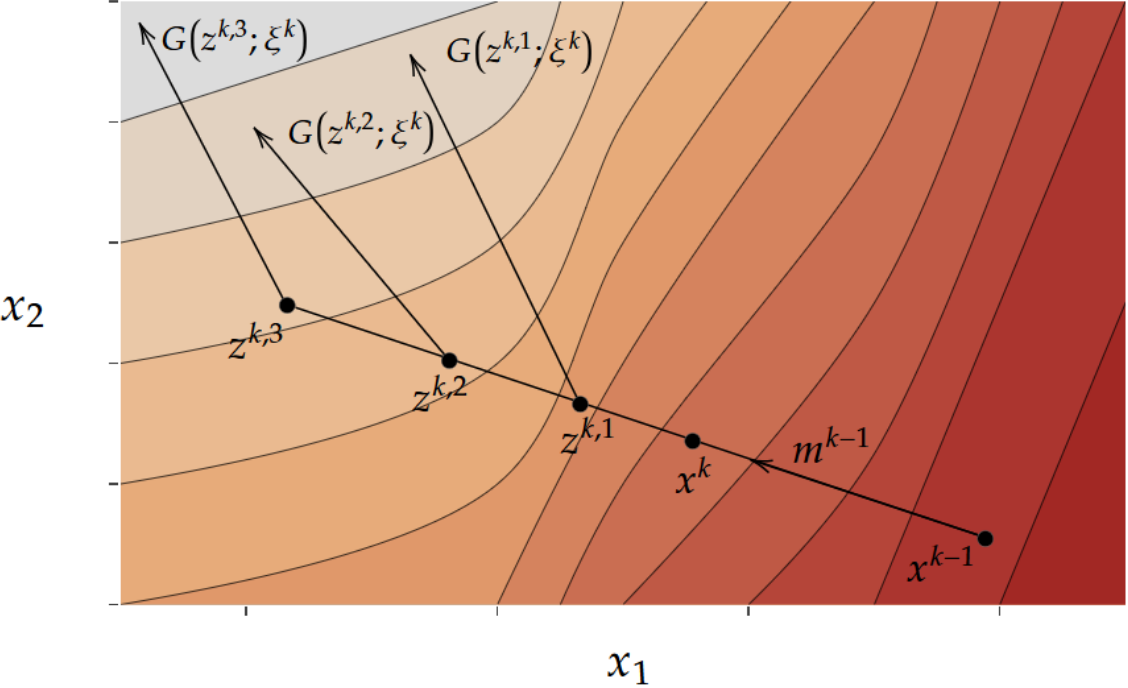}
\end{minipage}
\hfill
\begin{minipage}[b]{0.49\linewidth}
\centering
 \hfill\includegraphics[width=\linewidth]{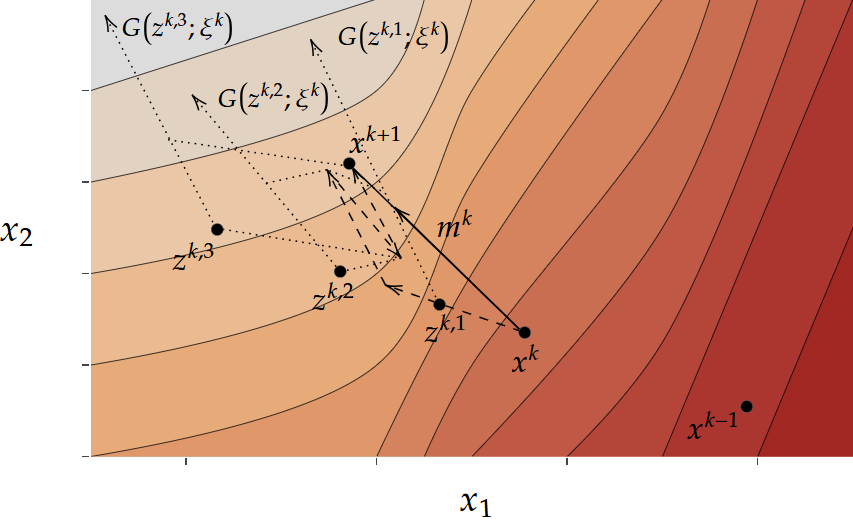}
\end{minipage}
\caption{Visualization of the updates for $\{z^{k,t}\}_{1\le t\le 3}$ (left) and $m^k$ (right) on a contour plot.}
\label{visual}
\end{figure}

Before proceeding, we present the following lemma regarding the descent relation of $f$ on iterates generated by \cref{alg:unf-sfom}. Its proof is deferred to \cref{sec:proof-lem12}.

\begin{lemma}\label{lem:1-desc}
Suppose that Assumptions \ref{asp:basic} and \ref{asp:pth-smth} hold. Let $\{(x^k,m^k)\}_{k\ge 0}$ be generated by \cref{alg:unf-sfom} with step sizes $\{\eta_k\}_{k\ge0}$. Then, 
\begin{align}\label{ineq:1st-desc-1}
f(x^{k+1}) \le f(x^k) - \eta_k\|\nabla f(x^k)\| + 2\eta_k\|\nabla f(x^k) - m^k\| + \frac{L_1}{2}\eta_k^2\qquad\forall k\ge0,
\end{align}
where $L_1$ is given in \cref{asp:basic}(b).
\end{lemma}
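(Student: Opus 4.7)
\textbf{Proof proposal for \cref{lem:1-desc}.}
The plan is to start from the standard descent inequality \cref{ineq:1st-desc} implied by \cref{asp:basic}(b), evaluated at the pair $(x^k, x^{k+1})$, and then control the inner-product term using the fact that the update in \cref{update-xk} is a \emph{normalized} step along $m^k$. Writing the step as $x^{k+1} - x^k = -\eta_k m^k/\|m^k\|$ (with the convention that the update vanishes if $m^k = 0$), we immediately get $\|x^{k+1} - x^k\| = \eta_k$, which produces the quadratic term $L_1\eta_k^2/2$ in \cref{ineq:1st-desc-1}.

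The main work is in bounding the linear term $\nabla f(x^k)^T(x^{k+1}-x^k) = -\eta_k \langle \nabla f(x^k), m^k/\|m^k\|\rangle$. I would split $\nabla f(x^k) = m^k + (\nabla f(x^k) - m^k)$ to obtain
\begin{equation*}
-\eta_k \Big\langle \nabla f(x^k), \frac{m^k}{\|m^k\|}\Big\rangle = -\eta_k \|m^k\| - \eta_k \Big\langle \nabla f(x^k) - m^k, \frac{m^k}{\|m^k\|}\Big\rangle.
\end{equation*}
Then I would apply Cauchy--Schwarz (with $\|m^k/\|m^k\|\| \le 1$) to bound the second term by $\eta_k \|\nabla f(x^k) - m^k\|$, and the reverse triangle inequality $\|m^k\| \ge \|\nabla f(x^k)\| - \|\nabla f(x^k) - m^k\|$ to bound $-\eta_k \|m^k\| \le -\eta_k \|\nabla f(x^k)\| + \eta_k \|\nabla f(x^k) - m^k\|$. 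Summing these two bounds yields the desired $-\eta_k \|\nabla f(x^k)\| + 2\eta_k\|\nabla f(x^k) - m^k\|$, and combining with the quadratic term completes the inequality.

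I do not anticipate any real obstacle here: the argument is essentially the textbook descent lemma for normalized gradient-type methods, and neither the multi-extrapolation structure of $\{z^{k,t}\}$ nor the higher-order smoothness from \cref{asp:pth-smth} enters the proof — those only appear later when one estimates $\|\nabla f(x^k) - m^k\|$. The only mild subtlety is handling the degenerate case $m^k = 0$, which is trivially consistent with any normalization convention (e.g.\ $x^{k+1} = x^k$), and in that case the inequality \cref{ineq:1st-desc-1} reduces to $f(x^k) \le f(x^k) + 2\eta_k\|\nabla f(x^k)\| + L_1\eta_k^2/2$, which holds automatically.
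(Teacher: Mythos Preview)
Your proposal is correct and follows essentially the same approach as the paper: apply the descent inequality \cref{ineq:1st-desc} at $(x^k,x^{k+1})$, use the normalized update \cref{update-xk} to replace $\|x^{k+1}-x^k\|$ by $\eta_k$, split $\nabla f(x^k)$ into $m^k$ plus the error $\nabla f(x^k)-m^k$, then bound the cross term by Cauchy--Schwarz and convert $-\eta_k\|m^k\|$ into $-\eta_k\|\nabla f(x^k)\|+\eta_k\|\nabla f(x^k)-m^k\|$ via the triangle inequality. Your explicit handling of the degenerate case $m^k=0$ is a nice addition that the paper leaves implicit.
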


\subsection{An SFOM with double-extrapolated momentum}\label{subsec:sfom-dem}
In this subsection, we analyze a simple variant of \cref{alg:unf-sfom} with $q=2$, which we refer to as the SFOM with double-extrapolated momentum, since two separate extrapolations are performed in each iteration. In the following, we establish its sample and first-order operation complexity under Assumption \ref{asp:basic} and \cref{asp:pth-smth} with $p=3$.

Throughout this subsection, we impose the following equations on the algorithm input parameters $\{(\gamma_{k,t},\theta_{k,t})\}_{1\le t\le 2,k\ge0}$:
\begin{align}\label{3rd-lss}
\theta_{k,1}/\gamma_{k,1} + \theta_{k,2}/\gamma_{k,2} = 1,\quad \theta_{k,1}/\gamma_{k,1}^2 + \theta_{k,2}/\gamma_{k,2}^2 = 1\qquad\forall k\ge0.
\end{align}  
It can be verified that for any $\gamma_{k,1}$ and $\gamma_{k,2}$ with two distinct positive values, the values of $\theta_{k,1}$ and $\theta_{k,2}$ can be uniquely determined by solving the above equations. In addition, we require that
\begin{align}\label{3rd-theta-01}
\theta_{k,1} + \theta_{k,2} \in(0,1)\qquad \forall k\ge 0.    
\end{align}
The specific selection of $\{(\gamma_{k,t},\theta_{k,t})\}_{1\le t\le 2,k\ge0}$ for \cref{alg:unf-sfom} with $q=2$ such that \cref{3rd-lss} and \cref{3rd-theta-01} hold is given in \cref{eta-gamma-3} and \cref{theta-3} below. We next provide some useful recurrence properties for \cref{alg:unf-sfom} with $q=2$ assuming that \cref{3rd-lss} and \cref{3rd-theta-01} hold. The following lemma establishes the recurrence for the estimation error of the gradient estimators $\{m^k\}_{k\ge0}$ generated by \cref{alg:unf-sfom} with $q=2$, whose proof is deferred to \cref{subsec:proof-dem}. 

\begin{algorithm}[!htbp]
\caption{An SFOM with multi-extrapolated momentum}
\label{alg:unf-sfom}
\begin{algorithmic}[0]
\State \textbf{Input:} starting point $x^0\in\R^n$, step sizes $\{\eta_k\}_{k\ge0}\subset(0,+\infty)$, extrapolations per iteration $q$, extrapolation parameters $\{\gamma_{k,t}\}_{1\le t\le q,k\ge0}\subset(0,1)$, weighting parameters $\{\theta_{k,t}\}_{1\le t\le q,k\ge0}$ with $\sum_{t=1}^q\theta_{k,t}\in(0,1)$ for all $k\ge0$. 
\State Initialize $x^{-1}=x^0$, $m^{-1}=0$, and $(\gamma_{-1,t},\theta_{-1,t})=(1,1/q)$ for all $1\le t\le q$.
\For{$k=0,1,2,\ldots$}
\State Perform $q$ separate extrapolations: 
\begin{align}\label{update-zk}
z^{k,t} = x^k + \frac{1-\gamma_{k-1,t}}{\gamma_{k-1,t}}(x^k - x^{k-1})\qquad \forall 1\le t\le q.
\end{align}
\State Compute the search direction:
\begin{align}\label{update-mk}
m^k = \Big(1 - \sum_{t=1}^{q}\theta_{k-1,t}\Big) m^{k-1} + \sum_{t=1}^{q}\theta_{k-1,t}G(z^{k,t};\xi^{k}).
\end{align}
\State Update the next iterate:
\begin{align}\label{update-xk}
x^{k+1} = x^k - \eta_k \frac{m^k}{\|m^k\|}. 
\end{align}
\EndFor
\end{algorithmic}
\end{algorithm}


\begin{lemma}\label{lem:3rd-rec}
Suppose that \cref{asp:basic} holds, and \cref{asp:pth-smth} holds with $p=3$. Let $\{(x^k,m^k)\}_{k\ge0}$ be generated by \cref{alg:unf-sfom} with inputs $q=2$, step sizes $\{\eta_k\}_{k\ge0}$, and $\{(\gamma_{k,t},\theta_{k,t})\}_{1\le t\le 2,k\ge0}$ satisfying \cref{3rd-lss} and \cref{3rd-theta-01}. Then, 
\begin{align}
&\mathbb{E}_{\xi^{k+1}}[\|m^{k+1} - \nabla f (x^{k+1})\|^2]\le (1-\theta_{k,1}-\theta_{k,2})\|m^k - \nabla f (x^k)\|^2\nonumber\\
&\qquad + \frac{L_3^2\eta_k^6\theta_{k,1}^2}{12\gamma_{k,1}^6(\theta_{k,1}+\theta_{k,2})} + \frac{L_3^2\eta_k^6\theta_{k,2}^2}{12\gamma_{k,2}^6(\theta_{k,1}+\theta_{k,2})} + \frac{L_3^2\eta_k^6}{12(\theta_{k,1}+\theta_{k,2})} + 2(\theta_{k,1}^2+\theta_{k,2}^2) \sigma^2\qquad\forall k\ge0,\label{rec:3rd-m}
\end{align}
where $\sigma$ and $L_3$ are given in Assumptions \ref{asp:basic}(b) and \ref{asp:pth-smth}, respectively.
\end{lemma}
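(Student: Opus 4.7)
The plan is to decompose the one-step error $m^{k+1} - \nabla f(x^{k+1})$ into a carryover term, a mean-zero stochastic term, and a deterministic bias term, and then analyze each separately. Concretely, adding and subtracting $(1-\theta_{k,1}-\theta_{k,2})\nabla f(x^k) + \theta_{k,1}\nabla f(z^{k+1,1}) + \theta_{k,2}\nabla f(z^{k+1,2})$ inside \cref{update-mk} gives a decomposition $m^{k+1} - \nabla f(x^{k+1}) = A + B + C$ with
\[
A = (1-\theta_{k,1}-\theta_{k,2})(m^k-\nabla f(x^k)), \quad B = \sum_{t=1}^2 \theta_{k,t}\bigl(G(z^{k+1,t};\xi^{k+1}) - \nabla f(z^{k+1,t})\bigr),
\]
and $C = (1-\theta_{k,1}-\theta_{k,2})\nabla f(x^k) + \sum_{t}\theta_{k,t}\nabla f(z^{k+1,t}) - \nabla f(x^{k+1})$. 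Since $x^{k+1}$ and the $z^{k+1,t}$'s are determined before $\xi^{k+1}$ is drawn, both $A$ and $C$ are independent of $\xi^{k+1}$, while $B$ has conditional mean zero by \cref{asp:basic}(c). Hence the cross term vanishes and $\mathbb{E}_{\xi^{k+1}}[\|A+B+C\|^2] = \|A+C\|^2 + \mathbb{E}_{\xi^{k+1}}[\|B\|^2]$.

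For the deterministic piece, I would apply Young's inequality with the specific parameter $\rho=(\theta_{k,1}+\theta_{k,2})/(1-\theta_{k,1}-\theta_{k,2})$ (which is valid thanks to \cref{3rd-theta-01}) to obtain
\[
\|A+C\|^2 \le (1-\theta_{k,1}-\theta_{k,2})\|m^k-\nabla f(x^k)\|^2 + \tfrac{1}{\theta_{k,1}+\theta_{k,2}}\|C\|^2.
\]
This already accounts for the leading momentum-contraction coefficient in \cref{rec:3rd-m} and the $1/(\theta_{k,1}+\theta_{k,2})$ factor on the remaining bias terms.

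The core of the argument — and the main obstacle — is showing that $\|C\|^2 = O(L_3^2 \eta_k^6)$, which is what makes the recurrence exploit third-order smoothness. To do this I would Taylor-expand $\nabla f(x^k)$ and $\nabla f(z^{k+1,t})$ around $x^{k+1}$ up to second order, using \cref{lem:pth-desc} with $p=3$ to control the remainders by $L_3\|\cdot\|^3/6$. Writing $\Delta = x^{k+1}-x^k$, so that $\|\Delta\| = \eta_k$ by \cref{update-xk} and $z^{k+1,t} - x^{k+1} = \frac{1-\gamma_{k,t}}{\gamma_{k,t}}\Delta$, the zeroth-order contribution collapses because the coefficients $(1-\theta_{k,1}-\theta_{k,2}),\theta_{k,1},\theta_{k,2}$ sum to $1$; the first- and second-order contributions collapse precisely under the two linear conditions \cref{3rd-lss} (after noting $(1/\gamma_{k,t}-1)^j$ expands in powers of $1/\gamma_{k,t}$ and recombining). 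Hence $C$ is a convex-type combination of three remainder vectors of norms at most $L_3\eta_k^3/6$ and $L_3\eta_k^3/(6\gamma_{k,t}^3)$ (the latter using $(1-\gamma_{k,t})/\gamma_{k,t} < 1/\gamma_{k,t}$). Applying $\|a_0+a_1+a_2\|^2 \le 3(\|a_0\|^2+\|a_1\|^2+\|a_2\|^2)$ and bounding $(1-\theta_{k,1}-\theta_{k,2})^2 \le 1$ yields the three bias terms appearing in \cref{rec:3rd-m}.

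Finally, $\mathbb{E}_{\xi^{k+1}}[\|B\|^2]$ is handled by the elementary bound $\|B\|^2 \le 2\theta_{k,1}^2\|G(z^{k+1,1};\xi^{k+1})-\nabla f(z^{k+1,1})\|^2 + 2\theta_{k,2}^2\|G(z^{k+1,2};\xi^{k+1})-\nabla f(z^{k+1,2})\|^2$ and the variance bound in \cref{est-unbias-varbd}, producing the $2(\theta_{k,1}^2+\theta_{k,2}^2)\sigma^2$ term. I anticipate that the algebraic verification of the Taylor-term cancellation (especially tracking the second-order term in $\nabla^3 f(x^{k+1})$) will be the delicate bookkeeping step, but the two equations in \cref{3rd-lss} were clearly designed to make exactly this happen.
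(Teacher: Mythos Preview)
Your proposal is correct and follows essentially the same strategy as the paper: the same $A+B+C$ decomposition, the same Young-inequality parameter $\rho=(\theta_{k,1}+\theta_{k,2})/(1-\theta_{k,1}-\theta_{k,2})$, the same $\|a_0+a_1+a_2\|^2\le 3\sum\|a_i\|^2$ bound on the bias, and the same $2(\theta_{k,1}^2+\theta_{k,2}^2)\sigma^2$ variance bound. The only substantive difference is the Taylor expansion center: the paper expands $\nabla f(x^{k+1})$ and each $\nabla f(z^{k+1,t})$ around $x^k$, so that the first- and second-order coefficients are literally $1-\sum_t\theta_{k,t}/\gamma_{k,t}^{r}$ and vanish \emph{by inspection} from \cref{3rd-lss}; you expand around $x^{k+1}$, which forces you to expand $((1-\gamma_{k,t})/\gamma_{k,t})^j$ binomially and recombine to recover the same cancellation. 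Both routes are valid and give the identical final bound (your remainders $\tfrac{L_3}{6}((1-\gamma_{k,t})/\gamma_{k,t})^3\eta_k^3$ are in fact slightly sharper than the paper's $\tfrac{L_3}{6\gamma_{k,t}^3}\eta_k^3$ before you relax them), but the paper's choice of center makes the ``delicate bookkeeping'' you anticipate disappear entirely.
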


The following theorem establishes the descent relation of potentials $\{P_k\}_{k\ge0}$ for \cref{alg:unf-sfom} with $q=2$, whose proof is relegated to \cref{subsec:proof-dem}.

\begin{theorem}\label{thm:3rd-conv}
Suppose that \cref{asp:basic} holds, and \cref{asp:pth-smth} holds with $p=3$. Let $\{(x^k,m^k)\}_{k\ge0}$ be generated by \cref{alg:unf-sfom} with inputs $q=2$, step sizes $\{\eta_k\}_{k\ge0}$, and $\{(\gamma_{k,t},\theta_{k,t})\}_{1\le t\le 2,k\ge0}$ satisfying \cref{3rd-lss} and \cref{3rd-theta-01}. Define
\begin{align}\label{def:3rd-Pk}
P_k \doteq f(x^k) + p_k\|m^k - \nabla f(x^k)\|^2\qquad\forall k\ge0,
\end{align}
where $\{p_k\}_{k\ge0}$ is a positive sequence satisfying
\begin{align}\label{thetak-3rd}
(1-\theta_{k,1}-\theta_{k,2})p_{k+1}\le (1-(\theta_{k,1}+\theta_{k,2})/2)p_{k}\qquad\forall k\ge0.    
\end{align}
Then,
\begin{align}
&\E_{\xi^{k+1}}[P_{k+1}]\le P_k - \eta_k\|\nabla f(x^k)\| + \frac{L_1}{2}\eta_k^2 + \frac{2\eta_k^2}{(\theta_{k,1} + \theta_{k,2})p_k}\nonumber\\
&\qquad + \frac{L_3^2\eta_k^6\theta_{k,1}^2p_{k+1}}{12\gamma_{k,1}^6(\theta_{k,1} + \theta_{k,2})} + \frac{L_3^2\eta_k^6\theta_{k,2}^2p_{k+1}}{12\gamma_{k,2}^6(\theta_{k,1} + \theta_{k,2})} + \frac{L_3^2\eta_k^6p_{k+1}}{12(\theta_{k,1} + \theta_{k,2})} + 2(\theta_{k,1}^2+\theta_{k,2}^2)p_{k+1}\sigma^2\qquad\forall k\ge 0,\label{3rd-stat-upbd}  
\end{align}
where $L_1$ and $\sigma$ are given in \cref{asp:basic}, and $L_3$ is given in \cref{asp:pth-smth}.
\end{theorem}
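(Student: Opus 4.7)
The plan is to combine the single-step descent for $f$ from \cref{lem:1-desc} with the recurrence for the gradient estimation error from \cref{lem:3rd-rec}, glued together by the potential in \cref{def:3rd-Pk}. Crucially, since $x^{k+1}$ is determined by $x^k$ and $m^k$ (and $m^k$ depends on $\xi^k$ but not $\xi^{k+1}$), all quantities appearing in \cref{lem:1-desc} are measurable with respect to the filtration up to time $k$, so the expectation $\E_{\xi^{k+1}}$ only needs to act on $\|m^{k+1}-\nabla f(x^{k+1})\|^2$.

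First I would invoke \cref{lem:1-desc} to write
\[
f(x^{k+1}) \le f(x^k) - \eta_k\|\nabla f(x^k)\| + 2\eta_k\|\nabla f(x^k) - m^k\| + \frac{L_1}{2}\eta_k^2,
\]
and then split the middle cross term using Young's inequality $2ab \le a^2/\alpha + \alpha b^2$ with the tailored choice $\alpha = (\theta_{k,1}+\theta_{k,2})p_k/2$ and $a=\eta_k$, $b=\|\nabla f(x^k) - m^k\|$. This produces exactly the term $\tfrac{2\eta_k^2}{(\theta_{k,1}+\theta_{k,2})p_k}$ that appears on the right-hand side of \cref{3rd-stat-upbd}, while leaving a controlled residual of $\tfrac{(\theta_{k,1}+\theta_{k,2})p_k}{2}\|\nabla f(x^k)-m^k\|^2$. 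The specific choice of $\alpha$ is the single nonobvious calibration in the proof.

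Next I would add $p_{k+1}\E_{\xi^{k+1}}[\|m^{k+1}-\nabla f(x^{k+1})\|^2]$ to both sides to build the full potential on the left, then bound the newly introduced term by $p_{k+1}$ times the right-hand side of \cref{rec:3rd-m}. Multiplying through by $p_{k+1}$ produces precisely the four ``error" terms listed in \cref{3rd-stat-upbd} (with $L_3$ and $\sigma^2$), along with a contribution of $(1-\theta_{k,1}-\theta_{k,2})p_{k+1}\|m^k-\nabla f(x^k)\|^2$ from the first term in \cref{rec:3rd-m}.

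Finally, it remains to absorb the two surviving $\|m^k-\nabla f(x^k)\|^2$ contributions back into $p_k\|m^k-\nabla f(x^k)\|^2$, so as to complete $P_k$ on the right. Their combined coefficient is
\[
\tfrac{(\theta_{k,1}+\theta_{k,2})p_k}{2} + (1-\theta_{k,1}-\theta_{k,2})p_{k+1},
\]
and the hypothesis \cref{thetak-3rd} is designed exactly so that this sum is bounded by $p_k$. Rearranging gives \cref{3rd-stat-upbd}. The main obstacle is not analytic but bookkeeping: matching the Young's parameter $\alpha$ to the potential weight $p_k$ so that the residual inner-product term telescopes cleanly against $(1-\theta_{k,1}-\theta_{k,2})p_{k+1}\|m^k-\nabla f(x^k)\|^2$, which is precisely what condition \cref{thetak-3rd} encodes.
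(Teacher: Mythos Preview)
Your proposal is correct and follows essentially the same approach as the paper: combine \cref{lem:1-desc} and \cref{lem:3rd-rec} via the potential \cref{def:3rd-Pk}, use Young's inequality with exactly the parameter $\alpha=(\theta_{k,1}+\theta_{k,2})p_k/2$, and invoke \cref{thetak-3rd} so that the two $\|m^k-\nabla f(x^k)\|^2$ contributions collapse to $p_k\|m^k-\nabla f(x^k)\|^2$. The only cosmetic difference is ordering---the paper applies \cref{thetak-3rd} before Young's inequality rather than after---but the computations are identical.
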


The following theorem establishes an upper bound for the number of iterations of \cref{alg:unf-sfom} with $q=2$ before finding an $\epsilon$-stochastic stationary point of problem \cref{pb:uc}. Its proof is deferred to \cref{subsec:proof-dem}.

\begin{theorem}\label{cor:rate-3}
Suppose that \cref{asp:basic} holds, and \cref{asp:pth-smth} holds with $p=3$. Define
\begin{align}\label{M3}
M_3\doteq 4\big( f(x^0) - f_{\mathrm{low}} + 19\sigma^2 + L_1  + 4L_3^2 + 2\big),
\end{align}
where $f_{\mathrm{low}}$, $L_1$, and $\sigma$ are given in \cref{asp:basic}, and $L_3$ is given in \cref{asp:pth-smth}. Let $\{x^k\}_{k\ge0}$ be all iterates generated by \cref{alg:unf-sfom} with inputs $q=2$ and $\{(\eta_k,\gamma_{k,t},\theta_{k,t})\}_{1\le t\le 2,k\ge0}$ specified as
\begin{align}
&\eta_k = \frac{1}{(k+3)^{7/10}},\quad \gamma_{k,1} = \frac{1}{(k+3)^{3/5}},\quad \gamma_{k,2} = \frac{1}{2(k+3)^{3/5}}\qquad\forall k\ge0,\label{eta-gamma-3}\\
&\theta_{k,1}= \frac{2(k+3)^{3/5}-1}{(k+3)^{6/5}},\quad \theta_{k,2}= \frac{1-(k+3)^{3/5}}{2(k+3)^{6/5}}\qquad\forall k\ge0.\label{theta-3}
\end{align}
Let $\iota_K$ be uniformly drawn from $\{0,\ldots,K-1\}$. Then,
\begin{align}\label{expect-kappa-3}
\E[\|\nabla f(x^{\iota_K})\|]\le  \epsilon\qquad\forall K\ge \max\Big\{\Big(\frac{20M_3}{3\epsilon}\ln\Big(\frac{20M_3}{3\epsilon}\Big)\Big)^{10/3},5\Big\},
\end{align}
where $\epsilon\in(0,1)$, and the expectation is taken over the randomness in \cref{alg:unf-sfom} and in the drawing of $\iota_K$.
\end{theorem}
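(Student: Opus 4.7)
The plan is to apply \cref{thm:3rd-conv} under the parameter schedule \cref{eta-gamma-3,theta-3}, choose the potential weights $\{p_k\}$ so that every summand on the right-hand side of \cref{3rd-stat-upbd} is at most of order $(k+3)^{-1}$, and then telescope. First I would verify the premises of \cref{thm:3rd-conv}: direct substitution gives $\theta_{k,1}/\gamma_{k,1}+\theta_{k,2}/\gamma_{k,2}=1$ and $\theta_{k,1}/\gamma_{k,1}^{2}+\theta_{k,2}/\gamma_{k,2}^{2}=1$, and $\theta_{k,1}+\theta_{k,2}=(3(k+3)^{3/5}-1)/(2(k+3)^{6/5})\in(0,1)$ for all $k\ge0$, so both \cref{3rd-lss} and \cref{3rd-theta-01} hold. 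I would then take $p_k=c(k+3)^{1/5}$ for a suitable constant $c>0$. Using $p_{k+1}/p_k\le 1+1/(5(k+3))$ together with the asymptotic $(\theta_{k,1}+\theta_{k,2})/(2(1-\theta_{k,1}-\theta_{k,2}))\sim 3/(4(k+3)^{3/5})$, the recursion \cref{thetak-3rd} reduces to the inequality $1/(5(k+3))\le 3/(4(k+3)^{3/5})$, which holds for all $k\ge 0$.

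Substituting the scalings $\eta_k\sim(k+3)^{-7/10}$, $\gamma_{k,t}\sim(k+3)^{-3/5}$, $\theta_{k,t}\sim(k+3)^{-3/5}$, $(\theta_{k,1}+\theta_{k,2})^{-1}\sim(k+3)^{3/5}$, and $p_k\sim(k+3)^{1/5}$ into the right-hand side of \cref{3rd-stat-upbd}, one finds that the estimation-error term $\eta_k^{2}/((\theta_{k,1}+\theta_{k,2})p_k)$, the three sixth-order remainders $\eta_k^{6}\theta_{k,t}^{2}p_{k+1}/(\gamma_{k,t}^{6}(\theta_{k,1}+\theta_{k,2}))$ and $\eta_k^{6}p_{k+1}/(\theta_{k,1}+\theta_{k,2})$, and the variance term $(\theta_{k,1}^{2}+\theta_{k,2}^{2})p_{k+1}\sigma^{2}$ each scale as $(k+3)^{-1}$ or faster, while $\eta_k^{2}$ is summable. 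Telescoping \cref{3rd-stat-upbd} from $k=0$ to $K-1$, taking full expectation, and using $P_K\ge f_{\mathrm{low}}$ thus produces
\begin{align*}
\sum_{k=0}^{K-1}\eta_k\,\E[\|\nabla f(x^k)\|]\le \E[P_0]-f_{\mathrm{low}}+C\bigl(1+L_1+L_3^{2}+\sigma^{2}\bigr)\ln(K+3),
\end{align*}
for an absolute constant $C$; lumping the right-hand side (including $\E[P_0]\le f(x^0)+p_0\sigma^{2}$) yields a bound of the form $M_3/4$ in the notation of \cref{M3}.

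Finally, because $\{\eta_k\}$ is decreasing, $\eta_{K-1}\sum_k\E[\|\nabla f(x^k)\|]\le\sum_k\eta_k\E[\|\nabla f(x^k)\|]$, and with $K\eta_{K-1}=K/(K+2)^{7/10}\gtrsim K^{3/10}$ (valid for $K\ge 5$), the uniform distribution of $\iota_K$ yields $\E[\|\nabla f(x^{\iota_K})\|]\lesssim M_3\ln(K+3)/K^{3/10}$. Imposing the right-hand side to be $\le\epsilon$ reduces to $K^{3/10}/\ln(K+3)\ge 20M_3/(3\epsilon)$, and the ansatz $K=((20M_3/(3\epsilon))\ln(20M_3/(3\epsilon)))^{10/3}$ verifies this inequality by direct substitution, giving \cref{expect-kappa-3}. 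The main obstacle I foresee is constant tracking: selecting $c$ in $p_k$ so that \cref{thetak-3rd} holds uniformly in $k\ge0$ (not merely asymptotically) while simultaneously ensuring that the aggregated error bound collapses into the specific form of $M_3$ requires tight bounds at the first few indices, where the asymptotic scalings used above are loosest. Inverting the implicit inequality $K^{3/10}/\ln K\gtrsim A$ to the explicit $K\ge (A\ln A)^{10/3}$ form is routine but must be executed deliberately.
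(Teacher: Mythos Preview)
Your proposal is correct and follows essentially the same route as the paper: the paper likewise chooses $p_k=(k+3)^{1/5}$ (i.e., your $c=1$), verifies \cref{thetak-3rd} via the same ratio comparison $p_{k+1}/p_k\le 1+1/(5(k+3))$ against a lower bound on $(\theta_{k,1}+\theta_{k,2})/2$, substitutes the schedules into \cref{3rd-stat-upbd} to show each error term is $O((k+3)^{-1})$, telescopes, and then inverts $K^{-3/10}\ln K\le \epsilon/M_3$ via a dedicated calculus lemma. Your concern about constant tracking is well placed but is the only remaining work; the paper handles it by packaging the parameter estimates into a separate lemma (your bounds $\theta_{k,1}+\theta_{k,2}\in(1/(k+3)^{3/5},3/(2(k+3)^{3/5}))$ and $\theta_{k,t}^2\le 4/(k+3)^{6/5}$ are exactly what is needed) and by using $p_{k+1}\le 2p_k$ rather than $p_{k+1}\sim p_k$ when bounding the summed right-hand side.
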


\begin{remark}
From \cref{cor:rate-3}, we observe that \cref{alg:unf-sfom} achieves a sample and first-order operation complexity of $\widetilde{\mathcal{O}}(\epsilon^{-10/3})$ for finding an $\epsilon$-stochastic stationary point of problem \cref{pb:uc} under \cref{asp:basic} and \cref{asp:pth-smth} with $p=3$.   
\end{remark}

\subsection{An SFOM with multi-extrapolated momentum}\label{subsec:sfom-mem}

In this subsection, we analyze \cref{alg:unf-sfom} with $q=p-1$, which is capable of exploiting the smoothness of $D^p f$ for some $p\ge2$. In the following, we establish the sample and first-order operation complexity of \cref{alg:unf-sfom} with $q=p-1$ under Assumption \ref{asp:basic} and Assumption \ref{asp:pth-smth} for some $p\ge2$.

Throughout this subsection, we impose the following systems of equations on the input parameters $\{(\gamma_{k,t},\theta_{k,t})\}_{1\le t\le q,k\ge0}$ of \cref{alg:unf-sfom}:
\begin{align}\label{pth-lss}
\begin{bmatrix}
1/\gamma_{k,1} & 1/\gamma_{k,2} & \cdots & 1/\gamma_{k,q}\\ 
1/\gamma_{k,1}^2 & 1/\gamma_{k,2}^2 & \cdots & 1/\gamma_{k,q}^2\\ 
\vdots&\vdots&\ddots & \vdots\\
1/\gamma_{k,1}^q & 1/\gamma_{k,2}^q & \cdots & 1/\gamma_{k,q}^q
\end{bmatrix}\begin{bmatrix}
\theta_{k,1}\\
\theta_{k,2}\\
\vdots\\
\theta_{k,q}
\end{bmatrix}  = \begin{bmatrix}
1\\
1\\
\vdots\\
1
\end{bmatrix}\qquad\forall k\ge0.
\end{align}
In addition, we require that 
\begin{align}\label{pth-theta-01}
\sum_{t=1}^q\theta_{k,t} \in(0,1)\qquad\forall k\ge0.
\end{align}
Given $\{\gamma_{k,t}\}_{1\le t\le q}$, \cref{pth-lss} can be viewed as a system of linear equations with unknowns $\{\theta_{k,t}\}_{1\le t\le q}$, where the coefficient matrix in \cref{pth-lss} takes the form of a Vandermonde matrix (e.g., see \cite{horn2012matrix,humpherys2020foundations}). 
The following lemma demonstrates that if the values of $\{\gamma_{k,t}\}_{1\le t\le q}$ are positive and distinct, the values of $\{\theta_{k,t}\}_{1\le t\le q}$ can be uniquely determined by solving \cref{pth-lss}. In addition, this lemma provides an explicit expression for the solution to \cref{pth-lss}, along with the alternating-sign property of $\{\theta_{k,t}\}_{1\le t\le q}$, assuming that the values of $\{\gamma_{k,t}\}_{1\le t\le q}$ are ordered. Its proof is deferred to \cref{subsec:proof-mem}.

\begin{lemma}\label{lem:p-sol}
Assume that $\{\gamma_{k,t}\}_{1\le t\le q}\subset(0,1)$ with $\gamma_{k,1}>\cdots>\gamma_{k,q}$ are given for some $k\ge0$. Then, the solution $\{\theta_{k,t}\}_{1\le t\le q}$ to the linear system in \cref{pth-lss} is unique and can be explicitly written as
\begin{align}\label{theta-kt-explicit}
\theta_{k,t} = \frac{\prod_{1\le s\le q,s\neq t}(1-1/\gamma_{k,s})}{1/\gamma_{k,t}\prod_{1\le s\le q,s\neq t}(1/\gamma_{k,t}-1/\gamma_{k,s})}\qquad\forall 1\le t\le q,    
\end{align} 
which satisfies $\theta_{k,t}>0$ for all odd $t$ and $\theta_{k,t}<0$ for all even $t$. Moreover, it holds that
\begin{align}\label{sum-q-theta}
\sum_{t=1}^q   \theta_{k,t} = 1 - \frac{\prod_{t=1}^q(1/\gamma_{k,t}-1)}{\prod_{t=1}^q1/\gamma_{k,t}}. 
\end{align}
\end{lemma}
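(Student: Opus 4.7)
The plan is to work with the change of variables $a_t = 1/\gamma_{k,t}$, so the system \cref{pth-lss} becomes $\sum_{t=1}^q \theta_{k,t} a_t^j = 1$ for $j=1,\ldots,q$, where $a_1 < a_2 < \cdots < a_q$ all lie in $(1,\infty)$. The coefficient matrix is, up to scaling each column by $a_t$, a transposed Vandermonde matrix in the distinct nodes $a_1,\ldots,a_q$, hence nonsingular. This immediately gives uniqueness of the solution.

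To verify the explicit formula \cref{theta-kt-explicit}, I would recognize the factor
\[
\prod_{s\neq t}\frac{1-a_s}{a_t-a_s} = \psi_t(1)
\]
as the Lagrange basis polynomial $\psi_t(x)=\prod_{s\neq t}(x-a_s)/(a_t-a_s)$ evaluated at $x=1$. Then, with $\theta_{k,t}=a_t^{-1}\psi_t(1)$, one gets
\[
\sum_{t=1}^q \theta_{k,t} a_t^j \;=\; \sum_{t=1}^q a_t^{j-1}\psi_t(1).
\]
For $1\le j\le q$, the polynomial $x^{j-1}$ has degree at most $q-1$ and therefore coincides with its own Lagrange interpolant at the nodes $a_1,\ldots,a_q$; evaluating this identity at $x=1$ yields $\sum_t a_t^{j-1}\psi_t(1) = 1^{j-1} = 1$, which establishes that \cref{theta-kt-explicit} indeed solves \cref{pth-lss}.

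For the closed form \cref{sum-q-theta} of $\sum_t\theta_{k,t}$, the same viewpoint gives $\sum_t\theta_{k,t} = \sum_t a_t^{-1}\psi_t(1) = p(1)$, where $p$ is the degree-$\le q{-}1$ polynomial interpolating $1/x$ at the nodes. To evaluate $p(1)$ I would introduce $q(x) \doteq xp(x)-1$, which vanishes at each $a_t$ and has degree $\le q$, so $q(x)=c\prod_t(x-a_t)$ for some constant $c$; evaluating at $x=0$ fixes $c=(-1)^{q+1}/\prod_t a_t$, and then evaluating at $x=1$ produces $p(1)=1 - \prod_t(a_t-1)/\prod_t a_t$, which is exactly \cref{sum-q-theta} after unwinding $a_t=1/\gamma_{k,t}$.

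The remaining alternating-sign claim is a direct sign count on \cref{theta-kt-explicit}: since each $a_s>1$, the numerator $\prod_{s\neq t}(1-a_s)$ carries sign $(-1)^{q-1}$; and since $a_1<\cdots<a_q$, the denominator contribution $\prod_{s\neq t}(a_t-a_s)$ has exactly $q-t$ negative factors, giving sign $(-1)^{q-t}$. Combined with the positive factor $a_t$, the overall sign of $\theta_{k,t}$ is $(-1)^{(q-1)-(q-t)}=(-1)^{t-1}$, so $\theta_{k,t}>0$ for odd $t$ and $\theta_{k,t}<0$ for even $t$. The main conceptual step — and the one most likely to trip up a reader — is recognizing $\prod_{s\neq t}(1-a_s)/(a_t-a_s)$ as $\psi_t(1)$; once this identification is made, both the verification of \cref{theta-kt-explicit} and the evaluation \cref{sum-q-theta} reduce to standard Lagrange-interpolation manipulations, and no lengthy computation is required.
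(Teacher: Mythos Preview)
Your proof is correct and follows essentially the same Lagrange-interpolation route as the paper. The only cosmetic difference is that the paper works with the degree-$q$ polynomials $h_t(\alpha)=\alpha\,\psi_t(\alpha)/a_t$ (which also vanish at $0$) and explicitly checks $V\Gamma=I$, whereas you verify the system directly by interpolating $x^{j-1}$; for the sum, the paper identifies $h(\alpha)=\sum_t h_t(\alpha)=\alpha\,p(\alpha)$ by its $q{+}1$ values at $0,a_1,\ldots,a_q$, which is exactly your $q(x)=xp(x)-1=c\prod_t(x-a_t)$ argument written differently.
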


The specific selection of $\{(\gamma_{k,t},\theta_{k,t})\}_{1\le t\le q,k\ge0}$ for \cref{alg:unf-sfom} such that \cref{pth-lss} and \cref{pth-theta-01} hold is given in \cref{eta-gamma-p} and \cref{theta-p} below. We next provide some useful recurrence properties for \cref{alg:unf-sfom} assuming that \cref{pth-lss} and \cref{pth-theta-01} hold. The following lemma establishes the recurrence for the estimation error of the gradient estimators $\{m^k\}_{k\ge0}$ of \cref{alg:unf-sfom} with $q=p-1$. Its proof is deferred to \cref{subsec:proof-mem}.


\begin{lemma}\label{lem:pth-rec}
Suppose that \cref{asp:basic} holds, and \cref{asp:pth-smth} holds for some $p\ge2$. Let $\{(x^k,m^k)\}_{k\ge0}$ be generated by \cref{alg:unf-sfom} with inputs $q=p-1$, step sizes $\{\eta_k\}_{k\ge0}$, and $\{(\gamma_{k,t},\theta_{k,t})\}_{1\le t\le p-1, k\ge0}$ satisfying \cref{pth-lss} and \cref{pth-theta-01}. Then, 
\begin{align}\label{rec:pth-m}
&\mathbb{E}_{\xi^{k+1}}[\|m^{k+1} - \nabla f (x^{k+1})\|^2]\nonumber\\
&\le \Big(1-\sum_{t=1}^{p-1}\theta_{k,t}\Big)\|m^k - \nabla f (x^k)\|^2 +  \frac{pL_p^2\eta_k^{2p}}{(p!)^2\sum_{t=1}^{p-1}\theta_{k,t}} \Big(1+\sum_{t=1}^{p-1} \frac{\theta_{k,t}^2}{\gamma_{k,t}^{2p}}\Big) + (p-1)\sigma^2\sum_{t=1}^{p-1}\theta_{k,t}^2\qquad\forall k\ge0,
\end{align}    
where $\sigma$ is given in Assumption \ref{asp:basic}(b), and $p$ and $L_p$ are given in Assumption \ref{asp:pth-smth}.
\end{lemma}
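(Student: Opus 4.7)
The plan is to decompose the gradient estimation error into three pieces and bound each separately. Writing $S = \sum_{t=1}^{p-1}\theta_{k,t}$ and invoking the update rule~\cref{update-mk}, I would write
\[
m^{k+1} - \nabla f(x^{k+1}) = \underbrace{(1-S)\bigl(m^k-\nabla f(x^k)\bigr)}_{A} + \underbrace{(1-S)\nabla f(x^k) + \sum_{t=1}^{p-1}\theta_{k,t}\nabla f(z^{k+1,t}) - \nabla f(x^{k+1})}_{B} + \underbrace{\sum_{t=1}^{p-1}\theta_{k,t}\bigl[G(z^{k+1,t};\xi^{k+1}) - \nabla f(z^{k+1,t})\bigr]}_{C}.
\]
Since each $z^{k+1,t}$ depends only on $x^k$ and $x^{k+1}$ and is therefore independent of $\xi^{k+1}$, the term $C$ is mean-zero conditional on the history under~\cref{asp:basic}(c), so the cross terms vanish and $\mathbb{E}_{\xi^{k+1}}[\|A+B+C\|^2] = \|A+B\|^2 + \mathbb{E}_{\xi^{k+1}}[\|C\|^2]$.

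Next I would apply the elementary inequality $\|A+B\|^2 \le \|A\|^2/(1-S) + \|B\|^2/S$, valid because $S\in(0,1)$ by~\cref{pth-theta-01}. The first piece becomes $(1-S)\|m^k-\nabla f(x^k)\|^2$, matching the leading term of~\cref{rec:pth-m}. The variance term $\mathbb{E}_{\xi^{k+1}}[\|C\|^2]$ is handled by Cauchy--Schwarz across the $p-1$ summands and~\cref{asp:basic}(c), yielding $(p-1)\sigma^2\sum_t\theta_{k,t}^2$, matching the last term of~\cref{rec:pth-m}.

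The heart of the argument is showing that $B$ collapses to a sum of Taylor remainders. I would Taylor-expand $\nabla f(x^k)$ and each $\nabla f(z^{k+1,t})$ around $x^{k+1}$ via~\cref{lem:pth-desc}. Writing $\Delta = x^{k+1}-x^k$ with $\|\Delta\| = \eta_k$, one has $x^k = x^{k+1}-\Delta$ and $z^{k+1,t} = x^{k+1} + \frac{1-\gamma_{k,t}}{\gamma_{k,t}}\Delta$, so the coefficient of $\tfrac{1}{(r-1)!}\nabla^r f(x^{k+1})\Delta^{r-1}$ in $B$ is
\[
(1-S)(-1)^{r-1} + \sum_{t=1}^{p-1}\theta_{k,t}\Bigl(\frac{1-\gamma_{k,t}}{\gamma_{k,t}}\Bigr)^{r-1}.
\]
Expanding $(1/\gamma_{k,t}-1)^{r-1}$ via the binomial theorem and invoking~\cref{pth-lss} (i.e.\ $\sum_t\theta_{k,t}/\gamma_{k,t}^i = 1$ for $i=1,\dots,p-1$) together with $\sum_t\theta_{k,t} = S$, the second sum evaluates to $(-1)^{r-1}(S-1)$ for every $2 \le r \le p$, so the full coefficient vanishes; for $r=1$ the coefficient equals $1$ and cancels $-\nabla f(x^{k+1})$. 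What remains is $B = (1-S)E_1 + \sum_t\theta_{k,t}E_{2,t}$ with $\|E_1\|\le\frac{L_p}{p!}\eta_k^p$ and $\|E_{2,t}\|\le\frac{L_p}{p!}\bigl(\tfrac{1-\gamma_{k,t}}{\gamma_{k,t}}\bigr)^p\eta_k^p$ by~\cref{lem:pth-desc}.

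Finally, applying Cauchy--Schwarz across the $p$ summands of $B$, combined with $(1-S)^2\le 1$ and $(1-\gamma_{k,t})^{2p}\le 1$, gives $\|B\|^2 \le \frac{pL_p^2\eta_k^{2p}}{(p!)^2}\bigl(1+\sum_t\theta_{k,t}^2/\gamma_{k,t}^{2p}\bigr)$, and dividing by $S$ produces the middle term of~\cref{rec:pth-m}. The main obstacle is recognising that the Vandermonde system~\cref{pth-lss} is engineered precisely to annihilate the Taylor coefficients of orders $1$ through $p-1$ in $B$; once that cancellation is established, the $\eta_k^{2p}$ factor in the bias bound follows immediately from the standard remainder estimate of~\cref{lem:pth-desc}.
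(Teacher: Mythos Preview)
Your argument is correct and follows the same overall architecture as the paper: the same three-way split into past error, deterministic bias, and noise; the same Young-type inequality $\|A+B\|^2\le \|A\|^2/(1-S)+\|B\|^2/S$; and the same Cauchy--Schwarz bounds on the noise and remainder terms.

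The one genuine difference is the center of the Taylor expansion used to control $B$. The paper expands $\nabla f(x^{k+1})$ and each $\nabla f(z^{k+1,t})$ around $x^k$, where $z^{k+1,t}-x^k=\Delta/\gamma_{k,t}$; then the $r$th-order Taylor term carries the coefficient $1-\sum_t\theta_{k,t}/\gamma_{k,t}^{r-1}$, which vanishes \emph{directly} by~\cref{pth-lss} without any binomial algebra. Your expansion around $x^{k+1}$ is equally valid but forces you through the binomial identity for $(1/\gamma_{k,t}-1)^{r-1}$ before~\cref{pth-lss} can be invoked. In exchange you get a marginally sharper raw remainder for $E_{2,t}$ (a factor $(1-\gamma_{k,t})^{p}$ instead of $1$), which you then discard via $(1-\gamma_{k,t})^{2p}\le 1$ to match the stated bound. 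So the paper's choice is slightly slicker, yours slightly tighter before the final simplification, and both land in the same place.
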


The following theorem establishes the descent relation of potentials $\{P_k\}_{k\ge0}$ for \cref{alg:unf-sfom} with $q=p-1$, whose proof is deferred to \cref{subsec:proof-mem}.


\begin{theorem}\label{thm:pth-conv}
Suppose that \cref{asp:basic} holds, and \cref{asp:pth-smth} holds for some $p\ge2$. Let $\{(x^k,m^k)\}_{k\ge0}$ be generated by \cref{alg:unf-sfom} with inputs $q=p-1$, step sizes $\{\eta_k\}_{k\ge0}$, and $\{(\gamma_{k,t},\theta_{k,t})\}_{1\le t\le p-1, k\ge0}$ satisfying \cref{pth-lss} and \cref{pth-theta-01}. Define
\begin{align}\label{def:pth-Pk}
P_k \doteq f(x^k) + p_k\|m^k - \nabla f(x^k)\|^2\qquad\forall k\ge0,
\end{align}
where $\{p_k\}_{k\ge0}$ is a positive sequence such that
\begin{align}\label{thetak-pth}
\Big(1- \sum_{t=1}^{p-1}\theta_{k,t}\Big)p_{k+1}\le \Big(1-\sum_{t=1}^{p-1}\theta_{k,t}/(p+1)\Big)p_{k}\qquad\forall k\ge0.
\end{align}
Then, 
\begin{align}
&\E_{\xi^{k+1}}[P_{k+1}]\le P_k - \eta_k\|\nabla f(x^k)\| + \frac{L_1}{2}\eta_k^2\nonumber\\
&\qquad\qquad + \frac{(p+1)\eta_k^2}{p_k\sum_{t=1}^{p-1}\theta_{k,t}} +  \frac{pL_p^2\eta_k^{2p}p_{k+1}}{(p!)^2\sum_{t=1}^{p-1}\theta_{k,t}} \Big(1 +\sum_{t=1}^{p-1} \frac{\theta_{k,t}^2}{\gamma_{k,t}^{2p}}\Big) + (p-1)\sigma^2p_{k+1}\sum_{t=1}^{p-1}\theta_{k,t}^2\qquad\forall k\ge0,\label{pth-stat-upbd} 
\end{align}
where $f_{\mathrm{low}}$, $L_1$, and $\sigma$ are given in \cref{asp:basic}, and $p$ and $L_p$ are given in \cref{asp:pth-smth}.
\end{theorem}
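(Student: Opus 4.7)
The plan is to combine the descent inequality for $f$ from \cref{lem:1-desc} with the variance recursion for $\{m^k\}$ from \cref{lem:pth-rec}, and then absorb the crossterm $2\eta_k\|\nabla f(x^k) - m^k\|$ using Young's inequality. Concretely, I would first take \cref{ineq:1st-desc-1} evaluated at iteration $k$ together with $p_{k+1}$ times the bound in \cref{rec:pth-m}, and take conditional expectation with respect to $\xi^{k+1}$. This yields
\begin{align*}
\E_{\xi^{k+1}}[P_{k+1}] &\le f(x^k) + p_{k+1}\Big(1-\sum_{t=1}^{p-1}\theta_{k,t}\Big)\|m^k - \nabla f(x^k)\|^2 \\
&\quad - \eta_k\|\nabla f(x^k)\| + 2\eta_k\|\nabla f(x^k) - m^k\| + \tfrac{L_1}{2}\eta_k^2 + p_{k+1}\cdot(\text{remainder terms}),
\end{align*}
where the remainder terms are exactly those appearing on the right of \cref{pth-stat-upbd} apart from the $\eta_k^2/(p_k\sum\theta_{k,t})$ piece.

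Next I would use the assumed recursion \cref{thetak-pth} to replace the coefficient of $\|m^k - \nabla f(x^k)\|^2$, obtaining
\[
p_{k+1}\Big(1-\sum_{t=1}^{p-1}\theta_{k,t}\Big)\|m^k - \nabla f(x^k)\|^2 \;\le\; p_k\|m^k - \nabla f(x^k)\|^2 - \frac{p_k\sum_{t=1}^{p-1}\theta_{k,t}}{p+1}\|m^k - \nabla f(x^k)\|^2.
\]
The first term on the right is absorbed into $P_k$, while the second is a negative quadratic in $\|m^k - \nabla f(x^k)\|$ that I will use to dominate the crossterm. Specifically, Young's inequality with parameter $\alpha = p_k\sum_{t=1}^{p-1}\theta_{k,t}/(p+1)$ gives
\[
2\eta_k\|\nabla f(x^k) - m^k\| \;\le\; \frac{p_k\sum_{t=1}^{p-1}\theta_{k,t}}{p+1}\|\nabla f(x^k) - m^k\|^2 + \frac{(p+1)\eta_k^2}{p_k\sum_{t=1}^{p-1}\theta_{k,t}},
\]
so the quadratic term cancels exactly and leaves the residual $(p+1)\eta_k^2/(p_k\sum_{t=1}^{p-1}\theta_{k,t})$, matching the term in \cref{pth-stat-upbd}.

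Collecting everything then yields the claimed bound \cref{pth-stat-upbd} directly, with no further estimation required. Since all the nontrivial work (the descent of $f$, the mean-square recursion exploiting the $p$th-order Taylor expansion, and the explicit expression for the $\{\theta_{k,t}\}$) has been done in \cref{lem:1-desc,lem:pth-rec,lem:p-sol}, this step is essentially algebraic bookkeeping. The only subtle point, and arguably the main obstacle in designing the proof, is the choice of Young's parameter: it has to be matched precisely to the $1/(p+1)$ contraction factor appearing in \cref{thetak-pth} so that the $\|m^k-\nabla f(x^k)\|^2$ terms cancel and the resulting $\eta_k^2$ penalty is exactly $(p+1)\eta_k^2/(p_k\sum_t \theta_{k,t})$. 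This explains why condition \cref{thetak-pth} is stated with the denominator $p+1$ rather than an arbitrary constant.
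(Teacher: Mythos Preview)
Your proposal is correct and follows essentially the same approach as the paper: combine \cref{ineq:1st-desc-1} with $p_{k+1}$ times \cref{rec:pth-m}, apply \cref{thetak-pth} to convert the $p_{k+1}(1-\sum_t\theta_{k,t})$ coefficient into $p_k(1-\sum_t\theta_{k,t}/(p+1))$, and then use Young's inequality with exactly the parameter you chose so that the quadratic in $\|m^k-\nabla f(x^k)\|$ cancels and the residual $(p+1)\eta_k^2/(p_k\sum_t\theta_{k,t})$ appears. The paper's proof is line-for-line the same argument.
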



The following theorem establishes an upper bound for the number of iterations of \cref{alg:unf-sfom} with $q=p-1$ before finding an $\epsilon$-stochastic stationary point of problem \cref{pb:uc}. Its proof is deferred to \cref{subsec:proof-mem}.

\begin{theorem}\label{cor:rate-p}
Suppose that \cref{asp:basic} holds, and \cref{asp:pth-smth} holds for some $p\ge2$. Define 
\begin{align}\label{M-p}
M_p\doteq 4\Big(f(x^0) - f_{\mathrm{low}} + p\sigma^2 +\frac{3L_1}{2} + \frac{7L_p^2}{(p!)^2} + 2(p+1 + 32p^{2p}L_p^2 + 16(p!)^2\sigma^2) \Big),
\end{align}
where $f_{\mathrm{low}}$, $L_1$, and $\sigma$ are given in \cref{asp:basic}, and $p$ and $L_p$ are given in \cref{asp:pth-smth}. Let $\{x^k\}_{k\ge0}$ be all iterates generated by \cref{alg:unf-sfom} with inputs $q=p-1$ and $\{(\eta_k,\gamma_{k,t},\theta_{k,t})\}_{1\le t\le p-1,k\ge0}$ specified as
\begin{align}
&\eta_k = \frac{1}{(k+p)^{(2p+1)/(3p+1)}},\quad \gamma_{k,t} = \frac{1}{t(k+p)^{2p/(3p+1)}}\qquad \forall 1\le t\le p-1,k\ge0,\label{eta-gamma-p}\\
&\theta_{k,t} = \frac{\prod_{1\le s\le p-1,s\neq t}(1-s(k+p)^{2p/(3p+1)})}{t(k+p)^{2p/(3p+1)}\prod_{1\le s\le p-1,s\neq t}((t-s)(k+p)^{2p/(3p+1)})}\qquad\forall 1\le t\le p-1,k\ge0.\label{theta-p}
\end{align}
Let $\iota_K$ be uniformly drawn from $\{0,\ldots,K-1\}$. Then,
\begin{align}\label{expect-kappa-p}
\E[\|\nabla f(x^{\iota_K})\|]\le \epsilon\qquad \forall k\ge \max\Big\{ \Big(\frac{(6p+2)M_p}{p\epsilon}\ln\Big(\frac{(6p+2)M_p}{p\epsilon}\Big)\Big)^{(3p+1)/p}, 2p\Big\},
\end{align}
where $\epsilon\in(0,1)$, and the expectation is taken over the randomness in \cref{alg:unf-sfom} and in the drawing of $\iota_K$.
\end{theorem}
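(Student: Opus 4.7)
The plan is to assemble the complexity bound \cref{expect-kappa-p} by combining the potential-descent inequality of \cref{thm:pth-conv} with sharp polynomial estimates for the time-varying parameters prescribed in \cref{eta-gamma-p,theta-p}. First I would verify admissibility: \cref{lem:p-sol} shows that, for the strictly decreasing sequence $\gamma_{k,t}=1/(t(k+p)^{2p/(3p+1)})$, formula \cref{theta-p} is exactly the unique Vandermonde solution to \cref{pth-lss}, so the prerequisites of \cref{thm:pth-conv} are met as soon as \cref{pth-theta-01} is checked. For the latter, \cref{sum-q-theta} yields
\[
\sum_{t=1}^{p-1}\theta_{k,t} \;=\; 1 \;-\; \prod_{t=1}^{p-1}\bigl(1 - 1/(t(k+p)^{2p/(3p+1)})\bigr),
\]
which is positive and strictly below $1$ for all $k\ge 0$ and, once $k\ge 2p$, is $\Theta\bigl((k+p)^{-2p/(3p+1)}\bigr)$. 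From \cref{theta-kt-explicit} I would also extract the uniform-in-$t$ estimates $|\theta_{k,t}|\lesssim p^{\,p}(k+p)^{-2p/(3p+1)}$ and $\theta_{k,t}^2/\gamma_{k,t}^{2p}\lesssim p^{2p}(k+p)^{(4p^2-4p)/(3p+1)}$, which is exactly where the $p^{2p}L_p^2$ factor visible in the definition \cref{M-p} of $M_p$ comes from.

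Next I would specify the potential weight sequence $p_k=(k+p)^{(p-1)/(3p+1)}$ and verify the recursion \cref{thetak-pth}. Using Bernoulli's inequality to write $p_{k+1}/p_k\le 1+(p-1)/((3p+1)(k+p))$, the inequality in \cref{thetak-pth} reduces to
\[
\frac{p}{p+1}\sum_{t=1}^{p-1}\theta_{k,t} \;\ge\; \frac{(p-1)/(3p+1)}{k+p}\Bigl(1-{\textstyle\sum_{t}\theta_{k,t}}\Bigr) + O\bigl((k+p)^{-2}\bigr),
\]
which holds for $k\ge 2p$ since $\sum_t\theta_{k,t}\gtrsim (k+p)^{-2p/(3p+1)}$ decays strictly slower than $1/(k+p)$. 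With this choice of $p_k$ the exponents of $(k+p)$ in each error term of \cref{pth-stat-upbd} can be computed directly: the term $\sigma^2 p_{k+1}\sum_t\theta_{k,t}^2$ and the term $p_{k+1}\eta_k^{2p}\theta_{k,t}^2/(\gamma_{k,t}^{2p}\sum_t\theta_{k,t})$ both land at exponent $-(3p+1)/(3p+1)=-1$, while the $L_1\eta_k^2$, $\eta_k^2/(p_k\sum_t\theta_{k,t})$, and $p_{k+1}\eta_k^{2p}/\sum_t\theta_{k,t}$ terms decay strictly faster; in particular every error term is bounded by a constant (dependent only on $p$, $L_1$, $L_p$, $\sigma$) times $1/(k+p)$.

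The third step is to take total expectation in \cref{pth-stat-upbd}, telescope from $k=0$ to $K-1$, and use $P_K\ge f_{\mathrm{low}}$. Folding the cumulative constants into the expression \cref{M-p} for $M_p$, this yields
\[
\sum_{k=0}^{K-1}\eta_k\,\E[\|\nabla f(x^k)\|] \;\le\; P_0-f_{\mathrm{low}}+\frac{M_p}{4}\sum_{k=0}^{K-1}\frac{1}{k+p}\;\le\;\frac{M_p}{2}\ln K.
\]
Since $\eta_k$ is nonincreasing, $\eta_k\ge\eta_{K-1}$ for every $k\le K-1$, so the uniform draw of $\iota_K$ gives
\[
\E[\|\nabla f(x^{\iota_K})\|] \;=\; \frac{1}{K}\sum_{k=0}^{K-1}\E[\|\nabla f(x^k)\|] \;\le\; \frac{M_p\ln K}{2K\,\eta_{K-1}} \;\lesssim\; \frac{M_p\ln K}{(K+p)^{p/(3p+1)}}.
\]
Imposing the right-hand side to be at most $\epsilon$ and applying the standard logarithmic-inversion lemma to the resulting inequality $(K+p)^{p/(3p+1)}\ge(6p+2)M_p\ln K/(p\epsilon)$ produces exactly the threshold in \cref{expect-kappa-p}; the companion condition $K\ge 2p$ is precisely what we needed for the $\sum_t\theta_{k,t}$ lower bound to be in force.

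The main obstacle I expect is Step~2: the sign-alternating structure of $\theta_{k,t}$ in \cref{lem:p-sol} makes $\sum_t\theta_{k,t}$ a delicate signed cancellation, so a clean positive lower bound of the correct order $(k+p)^{-2p/(3p+1)}$ must be extracted before the potential recursion \cref{thetak-pth} can be closed and the cross-term exponents made to align at exactly $-1$ in Step~3. Once that matching of exponents is in hand, the remaining computation is routine polynomial bookkeeping.
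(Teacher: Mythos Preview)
Your plan is essentially the paper's own proof: verify \cref{pth-lss,pth-theta-01} via \cref{lem:p-sol}, take $p_k=(k+p)^{(p-1)/(3p+1)}$, check \cref{thetak-pth}, telescope \cref{pth-stat-upbd}, and invert the resulting $M_pK^{-p/(3p+1)}\ln K$ bound with the log-inversion lemma. One point to correct: the paper obtains the lower bound $\sum_t\theta_{k,t}\ge\tfrac12(k+p)^{-2p/(3p+1)}$ and the recursion \cref{thetak-pth} for \emph{all} $k\ge0$ (using a Weierstrass-product estimate together with $(k+p)^{2p/(3p+1)}\ge p^{2p/(3p+1)}>1$), not only for $k\ge 2p$; you need the full range since the telescoping begins at $k=0$, and the threshold $K\ge 2p$ in \cref{expect-kappa-p} is actually consumed only at the very end to absorb constants such as $(K+p-1)^{(2p+1)/(3p+1)}\le 2K^{(2p+1)/(3p+1)}$ and $\ln(2K/(2p-1)+1)\le 2\ln K$.
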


\begin{remark}
From \cref{cor:rate-p}, we observe that \cref{alg:unf-sfom} achieves a sample and first-order operation complexity of $\widetilde{\mathcal{O}}(\epsilon^{-(3p+1)/p})$ for finding an $\epsilon$-stochastic stationary point of problem \cref{pb:uc} under \cref{asp:basic} and \cref{asp:pth-smth} with some $p\ge2$. This result generalizes the complexity of $\mathcal{O}(\epsilon^{-4})$ under the assumption of Lipschitz continuous gradients \cite{cutkosky2020momentum, ghadimi2013stochastic, ghadimi2016accelerated}, and the complexity of $\mathcal{O}(\epsilon^{-7/2})$ under the assumption of Lipschitz continuous Hessians \cite{cutkosky2020momentum, fang2019sharp}.
\end{remark}

\section{Numerical experiments}\label{sec:ne}
In this section, we conduct preliminary numerical experiments to evaluate the practical performance of our proposed SFOMs with multi-extrapolated momentum (\cref{alg:unf-sfom}). We compare our proposed method with the normalized stochastic gradient method with Polyak momentum (abbreviated as SG-PM), studied in \cite{cutkosky2020momentum}, for solving a data fitting problem and a robust regression problem with noisy gradient evaluations. All algorithms are implemented in MATLAB, and the computations are performed on a laptop with a 2.20 GHz Intel Core i9-14900HX processor and 32 GB of RAM.

\subsection{Data fitting problems}\label{subsec:df-pb}
In this subsection, we consider a data fitting problem:
\begin{align}\label{df}
\min_{x\in\R^n}  \Big\{f(x) = \sum_{i=1}^n \|s(a_i^Tx)-b_i\|^2\Big\},
\end{align}
where $s(t)=e^t/(1+e^t)$ is the sigmoid function, and $\{(a_i,b_i)\}_{1\le i\le n}\subset\R^n\times\R$ denotes the given data. It can be verified that the objective function satisfies Assumptions \ref{asp:basic}(a), \ref{asp:basic}(b), and \cref{asp:pth-smth} with $p=2,3$. We stimulate the noisy gradient evaluations by setting $G(x;\xi)=\nabla f(x) +  \Tilde{\sigma}\xi\min\{\sqrt{\|x\|},1\}\mathbf{1}$ for all $x\in\mathbb{R}^n$, where $\mathbf{1}\in\R^n$ is the all-one vector, $\xi$ follows the multivariate standard normal distribution, and $\Tilde{\sigma}>0$ is a given constant.  Then, in view of \cref{pro:special}, we see that $G(\cdot;\cdot)$ satisfies \cref{asp:basic}(c) but does not satisfy the mean-squared smoothness condition.

\begin{figure}[ht]
\centering
\begin{minipage}[b]{0.32\linewidth}
\centering
\includegraphics[width=\linewidth]{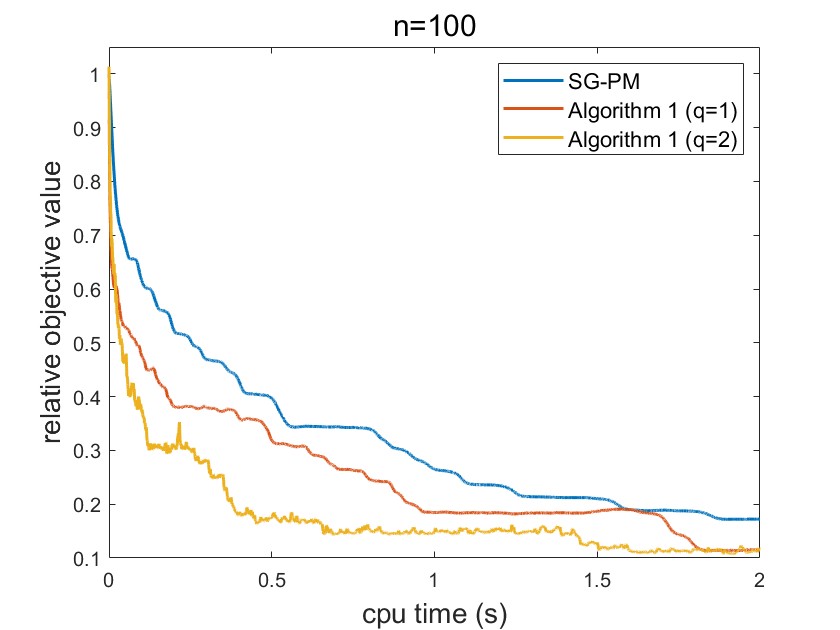}
\end{minipage}
\begin{minipage}[b]{0.32\linewidth}
\centering
\hfill\includegraphics[width=\linewidth]{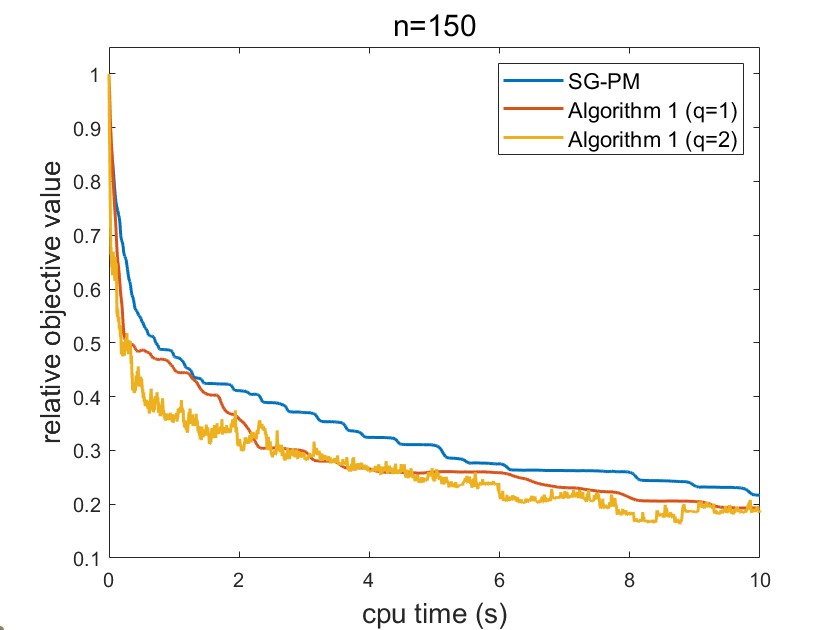}
\end{minipage}
\begin{minipage}[b]{0.32\linewidth}
\centering
\hfill\includegraphics[width=\linewidth]{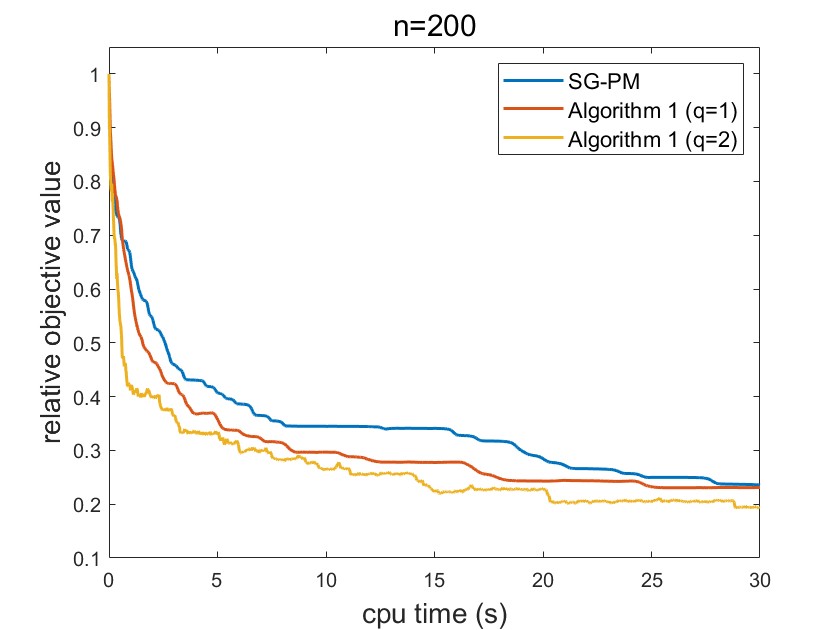}
\end{minipage}
\caption{Convergence behavior of the relative objective value for all SFOMs in solving problem~\cref{df}.}
\label{fig:alg1}
\end{figure}

For $n=100$, $150$, and $200$, we randomly generate $a_i$, $1\le i\le n$, with all entries independently chosen from the standard normal distribution. We also randomly generate the true solution $x^*$ with all entries independently chosen from the standard normal distribution. We then set $b_i$ to be $b_i=s(a_i^Tx^*)+0.1e_i$ for each $1\le i\le n$, where $e_i$, $1\le i\le n$, are independently chosen from the standard normal distribution. We set $\Tilde{\sigma}=10$ when simulating the noisy gradient evaluations.

We apply our \cref{alg:unf-sfom} with $q=1,2$, as well as SG-PM to solve \cref{df}. We compare these methods in terms of relative objective value, defined as $f(x^k)/f(x^0)$. For $n=100$, $150$, and $200$, we set the maximum CPU time to $2$, $10$, and $30$ seconds, respectively, and the initial iterate as the all-one vector. We tune the other input parameters of all methods to optimize their individual practical performance.

For each choice of $n$, we plot the relative objective value in \cref{fig:alg1} to illustrate the convergence behavior of all SFOMs for solving problem~\cref{df}. From this figure, we observe that \cref{alg:unf-sfom} with $q=1,2$ is faster than SG-PM, and moreover, \cref{alg:unf-sfom} with $q=2$ outperforms \cref{alg:unf-sfom} with $q=1$. These observations support our theoretical results.

\subsection{Robust regression problems}
In this subsection, we consider a robust regression problem:
\begin{align}\label{robust-reg}
\min_{x\in\R^n}  \Big\{f(x) = \sum_{i=1}^m \phi(a_i^Tx-b_i)\Big\},
\end{align}
where $\phi(t)=t^2/(1+t^2)$ is a robust loss function \cite{carmon2017convex,he2023newton}, and $\{(a_i,b_i)\}_{1\le i\le m}\subset\R^n\times\R$ is the training set. It can be verified that the objective function satisfies Assumptions \ref{asp:basic}(a), \ref{asp:basic}(b), and \cref{asp:pth-smth} with $p=2,3$. Similar to \cref{subsec:df-pb}, we stimulate the noisy gradient evaluations by setting $G(x;\xi)=\nabla f(x) +  \Tilde{\sigma}\xi\min\{\sqrt{\|x\|},1\}\mathbf{1}$ for all $x\in\mathbb{R}^n$, where $\mathbf{1}\in\R^n$ denotes the all-one vector, $\xi$ follows the standard normal distribution, and $\Tilde{\sigma}>0$ is a given constant.  Then, in view of \cref{pro:special}, we see that $G(\cdot;\cdot)$ satisfies \cref{asp:basic}(c) but does not satisfy the mean-squared smoothness condition.

We consider three real datasets, `red wine quality', `white wine quality', and `real estate valuation' from 
the UCI repository.\footnote{see \url{archive.ics.uci.edu/datasets}} We rescale the features and predictions so that their values lie in $[0,1]$. We set $\Tilde{\sigma}=100$ when simulating the noisy gradient evaluations. 

We apply our \cref{alg:unf-sfom} with $q=1,2$, as well as SG-PM to solve \cref{robust-reg}. We compare these methods in terms of relative loss, defined as $f(x^k)/f(x^0)$. For each method, we set the maximum CPU time to 5 seconds and the initial iterate as the all-one vector. We tune the other input parameters of all methods to optimize their individual practical performance.

\begin{figure}[ht]
\centering
\begin{minipage}[b]{0.32\linewidth}
\centering
\includegraphics[width=\linewidth]{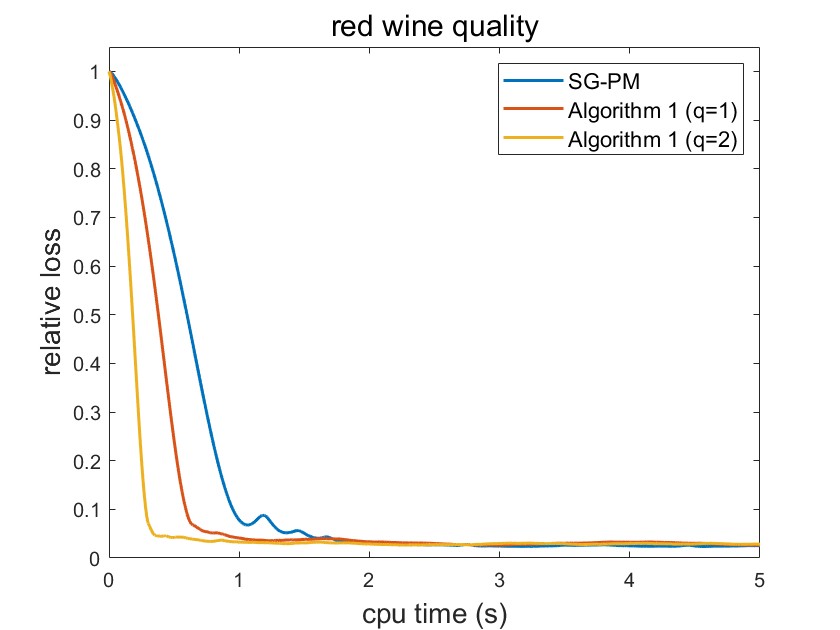}
\end{minipage}
\begin{minipage}[b]{0.32\linewidth}
\centering
\hfill\includegraphics[width=\linewidth]{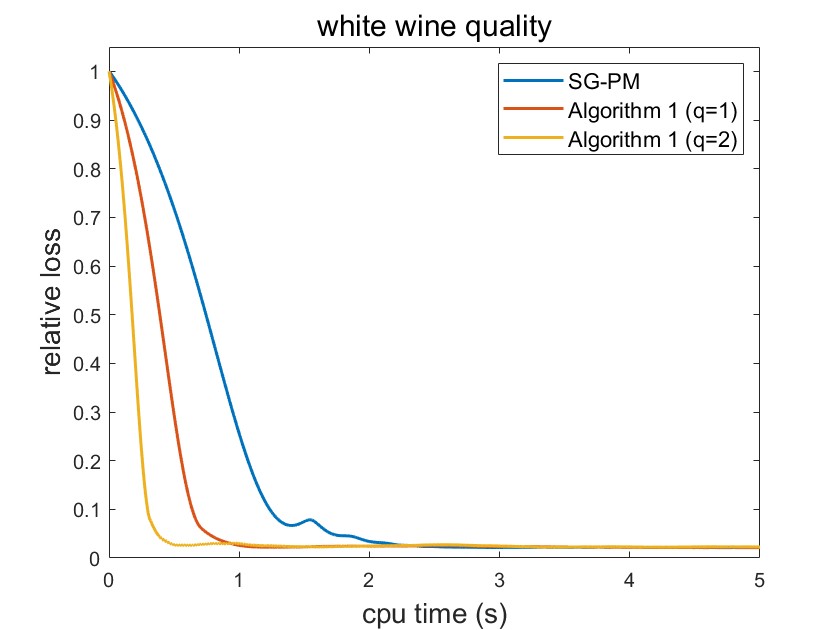}
\end{minipage}
\begin{minipage}[b]{0.32\linewidth}
\centering
\hfill\includegraphics[width=\linewidth]{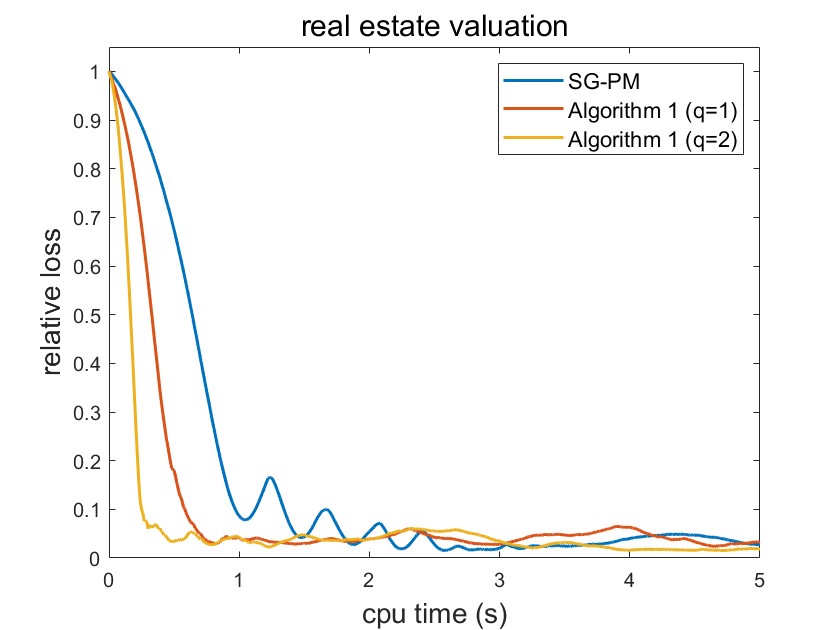}
\end{minipage}
\caption{Convergence behavior of the relative loss for all SFOMs in solving problem~\cref{robust-reg}.}
\label{fig:alg}
\end{figure}

For each dataset, we plot the relative loss in \cref{fig:alg} to illustrate the convergence behavior of all SFOMs when solving problem~\cref{robust-reg}. From \cref{fig:alg}, we observe that \cref{alg:unf-sfom} with $q=1,2$ outperforms SG-PM. In addition, \cref{alg:unf-sfom} with $q=2$ outperforms \cref{alg:unf-sfom} with $q=1$. These observations corroborate our theoretical results.



\section{Proof of the main results}\label{sec:proof}

In this section, we provide proofs of the main results in \cref{sec:not-asm,sec:method}, which are particularly \cref{lem:pth-desc,lem:1-desc,lem:3rd-rec,lem:p-sol,lem:pth-rec} and \cref{thm:3rd-conv,thm:pth-conv,cor:rate-3,cor:rate-p}.

To proceed, we first establish several technical lemmas. The following lemma concerns the estimation of the partial sums of series.

\begin{lemma}\label{lem:series}
Let $\zeta(\cdot)$ be a convex univariate function. Then, for any integers $a,b$ satisfying $[a-1/2,b+1/2]\subset\mathrm{dom}\zeta$, it holds that $\sum_{p=a}^b\zeta(p)\le \int^{b+1/2}_{a-1/2}\zeta(\tau)\mathrm{d}\tau$.
\end{lemma}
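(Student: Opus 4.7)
The plan is to apply Jensen's inequality on each unit subinterval $[p-1/2,p+1/2]$ for $a\le p\le b$, then telescope. Since $\zeta$ is convex on the interval $[a-1/2,b+1/2]\subset\mathrm{dom}\,\zeta$, it is in particular convex on each unit subinterval, so Jensen's inequality (integral form) gives
\[
\zeta\Big(\int_{p-1/2}^{p+1/2}\tau\,\mathrm{d}\tau\Big)\le \int_{p-1/2}^{p+1/2}\zeta(\tau)\,\mathrm{d}\tau,
\]
where the normalization factor equals $1$ because the interval has length $1$. The inner integral evaluates to $p$, so this reduces to the pointwise bound $\zeta(p)\le \int_{p-1/2}^{p+1/2}\zeta(\tau)\,\mathrm{d}\tau$.

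Summing this inequality over $p=a,\ldots,b$ and using additivity of the integral over the adjacent unit intervals $[p-1/2,p+1/2]$, whose union is exactly $[a-1/2,b+1/2]$, yields
\[
\sum_{p=a}^{b}\zeta(p)\le \sum_{p=a}^{b}\int_{p-1/2}^{p+1/2}\zeta(\tau)\,\mathrm{d}\tau=\int_{a-1/2}^{b+1/2}\zeta(\tau)\,\mathrm{d}\tau,
\]
which is the desired bound. There is no real obstacle here; the only point requiring attention is the domain hypothesis $[a-1/2,b+1/2]\subset\mathrm{dom}\,\zeta$, which is precisely what guarantees that each application of Jensen's inequality on $[p-1/2,p+1/2]$ is valid and that the final integral is well-defined.
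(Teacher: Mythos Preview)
Your proof is correct and follows essentially the same approach as the paper: both establish the pointwise bound $\zeta(p)\le\int_{p-1/2}^{p+1/2}\zeta(\tau)\,\mathrm{d}\tau$ via convexity (you make the appeal to Jensen's inequality explicit, whereas the paper simply asserts it) and then sum over $p=a,\ldots,b$.
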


\begin{proof}
Since $f$ is convex, one has $\zeta(p)\le \int_{p-1/2}^{p+1/2}\zeta(\tau)\mathrm{d}\tau$ for all $p\in[a,b]$. It then follows that $\sum_{p=a}^b\zeta(p)\le\int_{a-1/2}^{b+1/2}\zeta(\tau)\mathrm{d}\tau$ holds as desired.
\end{proof}

As a consequence of \cref{lem:series}, we consider $\zeta(\tau)=1/\tau^\alpha$ for $\alpha\in(0,\infty)$, where $\tau\in(0,\infty)$. Then, for any positive integers $a,b$, one has
\begin{align}
\sum_{p=a}^b \frac{1}{p^\alpha} \le \left\{\begin{array}{ll}
\ln\big(b+\frac{1}{2}\big) - \ln\big(a-\frac{1}{2}\big)&\text{if }\alpha=1,  \\[4pt]
\frac{1}{1-\alpha}\big(\big(b+\frac{1}{2}\big)^{1-\alpha} - \big(a-\frac{1}{2}\big)^{1-\alpha}\big)&\text{if }\alpha\in(0,1)\cup(1,+\infty).
\end{array}\right.
\label{upbd:series-ka}
\end{align}

We next provide an auxiliary lemma that will be used to analyze the complexity of our algorithm.

\begin{lemma}\label{lem:rate-complexity}
Let $\alpha\in(0,1)$ and $u\in(0,1/e)$ be given. Then, $1/v^\alpha\ln v\le 2u/\alpha$ holds for all $v$ satisfying $v\ge(1/u\ln(1/u))^{1/\alpha}$.
\end{lemma}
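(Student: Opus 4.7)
The plan is to view $g(v) := v^{-\alpha}\ln v$ as a single-variable function on $(1,\infty)$ and reduce the desired inequality to evaluating $g$ at the left endpoint $v_0 := (\ln(1/u)/u)^{1/\alpha}$ of the range $[v_0,\infty)$.

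First, I would compute $g'(v) = (1 - \alpha\ln v)/v^{\alpha+1}$, which shows that $g$ is strictly decreasing on $[e^{1/\alpha},\infty)$. Next, I would verify that the threshold $v_0$ actually lies in this decreasing regime: since $u \in (0,1/e)$ one has $\ln(1/u) > 1$ and $1/u > e$, so $\ln(1/u)/u > e$ and hence $v_0 > e^{1/\alpha}$. Consequently $g(v) \le g(v_0)$ for every $v \ge v_0$, reducing the claim to a single numerical estimate at $v_0$.

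It then remains to estimate $g(v_0)$. Using $v_0^\alpha = \ln(1/u)/u$ together with $\ln v_0 = \alpha^{-1}(\ln(1/u) + \ln\ln(1/u))$, a direct computation gives
\begin{equation*}
g(v_0) \;=\; \frac{u}{\alpha \ln(1/u)}\bigl(\ln(1/u) + \ln\ln(1/u)\bigr) \;=\; \frac{u}{\alpha}\Bigl(1 + \frac{\ln\ln(1/u)}{\ln(1/u)}\Bigr).
\end{equation*}
Setting $w := \ln(1/u)$, which satisfies $w > 1$ precisely because $u < 1/e$, and invoking the elementary inequality $\ln w \le w$, the parenthesized factor is at most $2$. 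This yields $g(v_0) \le 2u/\alpha$ and hence, via the first step, $g(v) \le 2u/\alpha$ for all $v \ge v_0$, as required.

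The argument is essentially two lines of calculus combined with the standard bound $\ln w \le w$, so there is no substantive obstacle. The only point that demands a bit of care is confirming that $v_0$ lies in the monotone-decreasing regime of $g$, which is exactly what the hypothesis $u < 1/e$ (rather than merely $u < 1$) is there to secure.
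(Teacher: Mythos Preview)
Your proof is correct and follows essentially the same approach as the paper's: both arguments show that $g(v)=v^{-\alpha}\ln v$ is decreasing on $[e^{1/\alpha},\infty)$, verify that the threshold $v_0$ lies in this regime using $u<1/e$, and then bound $g(v_0)$ via $\ln\ln(1/u)\le\ln(1/u)$. Your exposition is a bit more explicit (you actually compute $g'$ and spell out why $v_0>e^{1/\alpha}$), but the structure and key inequalities are identical.
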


\begin{proof}
Let $v \in \mathbb{R}$ be arbitrarily chosen such that $v\ge(1/u\log(1/u))^{1/\alpha}$. Then, we notice from $u\in(0,1/e)$ that
\begin{align}\label{aux-uv-ln}
v\ge(1/u\log(1/u))^{1/\alpha}> e^{1/\alpha}.    
\end{align}
Denote $\phi(v)\doteq 1/v^{\alpha}\log v$. Since $\phi$ is decreasing over $(e^{1/\alpha},\infty)$, it follows from \cref{aux-uv-ln} that 
\begin{align*}
1/v^{\alpha}\log v = \phi(v) \le  \phi((1/u\log(1/u))^{1/\alpha}) = \frac{u}{\alpha} \Big(1+\frac{\log\log(1/u)}{\log(1/u)}\Big)\le \frac{2u}{\alpha},
\end{align*}
where the last inequality is due to $\log\log(1/u)\le\log(1/u)$ for all $u\in(0,1/e)$. Hence, the conclusion of this lemma holds as desired.
\end{proof}

The following lemma concerns the uniqueness of polynomial interpolation. It can also be found in \cite[Section 9.2]{humpherys2020foundations}.
\begin{lemma}\label{lem:lag-inter}
For any $m+1$ pairs $\{(a_i,b_i)\}_{0\le i\le m}\subset\R\times\R$, where the values of $\{a_i\}_{0\le i\le m}$ are distinct, there exists a unique polynomial $\psi(\alpha)$ of degree at most $m$ such that $\psi(a_i)=b_i$ for all $0\le i\le m$. Moreover, such polynomial can be written as follows:
\begin{align}
\psi(\alpha) = \sum_{i=0}^m\frac{\prod_{0\le j\le m,j\neq i}(\alpha-a_j)}{\prod_{0\le j\le m,j\neq i}(a_i-a_j)}b_i.
\end{align}
\end{lemma}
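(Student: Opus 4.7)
The plan is to split the argument into the standard two parts: first exhibit a polynomial of degree at most $m$ achieving the interpolation conditions via the explicit formula, and then prove uniqueness by a root-counting argument. Both halves rely only on the distinctness of the nodes $\{a_i\}_{0\le i\le m}$ and elementary properties of real polynomials.

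For existence, I would introduce the Lagrange basis polynomials
\[
L_i(\alpha) \doteq \prod_{0\le j\le m,\,j\neq i}\frac{\alpha-a_j}{a_i-a_j}\qquad (0\le i\le m),
\]
noting that distinctness of $\{a_j\}$ ensures the denominators are nonzero, and each $L_i$ is a polynomial in $\alpha$ of degree exactly $m$. The key computation is then $L_i(a_k)=\delta_{ik}$: when $k=i$ numerator and denominator coincide and give $1$, while for $k\neq i$ the numerator contains the factor $(a_k-a_k)=0$. Defining $\psi(\alpha)=\sum_{i=0}^m L_i(\alpha)\,b_i$ yields a polynomial of degree at most $m$ satisfying $\psi(a_k)=\sum_{i=0}^m L_i(a_k)b_i = b_k$ for every $0\le k\le m$, which matches the formula stated in the lemma.

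For uniqueness, suppose $\psi_1$ and $\psi_2$ are both polynomials of degree at most $m$ with $\psi_1(a_i)=\psi_2(a_i)=b_i$ for all $0\le i\le m$. Then $\psi_1-\psi_2$ is a polynomial of degree at most $m$ possessing the $m+1$ distinct roots $a_0,\ldots,a_m$. Since a nonzero real polynomial of degree at most $m$ admits at most $m$ roots, we must have $\psi_1-\psi_2\equiv 0$, hence $\psi_1=\psi_2$, which establishes the uniqueness claim.

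There is no genuine obstacle here; the statement is the classical Lagrange interpolation theorem. The only point requiring any care is verifying the Kronecker-delta property of the basis polynomials $L_i$, and invoking (implicitly) the standard degree-versus-roots bound for polynomials over $\mathbb{R}$ to conclude uniqueness.
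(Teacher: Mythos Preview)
Your proposal is correct and follows essentially the same approach as the paper: explicit verification that the Lagrange formula interpolates (the paper states this as ``one can verify,'' while you spell out the Kronecker-delta property of the basis polynomials), followed by the identical root-counting argument for uniqueness.
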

\begin{proof}
One can verify that $\psi(a_i) = b_i$ for all $0 \leq i \leq m$. Next, we prove the uniqueness of $\psi$. Suppose there exist two polynomials, $\psi_1$ and $\psi_2$, both of degree at most $m$, and both satisfy $\psi_1(a_i) = \psi_2(a_i) = b_i$ for all $0 \leq i \leq m$. It follows that the polynomial $\psi_0(\alpha) \doteq \psi_1(\alpha) - \psi_2(\alpha)$ has degree at most $m$ and satisfies $\psi_0(a_i) = 0$ for all $0 \leq i \leq m$, where the values of $a_i$'s, $0 \leq i \leq m$, are distinct. Therefore, we can conclude that the polynomial has degree at most $m$ and has $m+1$ distinct zeros, which implies that $\psi_0(\alpha) \equiv 0$ for all $\alpha \in \mathbb{R}$. Hence, $\psi_1(\alpha)\equiv \psi_2(\alpha)$ for all $\alpha \in \mathbb{R}$, which proves the uniqueness of $\psi$.
\end{proof}

\subsection{Proof of \cref{lem:pth-desc,lem:1-desc}}\label{sec:proof-lem12}

\begin{proof}[Proof of \cref{lem:pth-desc}]
Fix an arbitrary $u\in\R^n$. We denote $\phi(x) \doteq \langle \nabla f(x),u\rangle$. Using this and the definition of $\nabla^{r+1} f(x)(h)^{r}$, one has that
\begin{align}\label{pth-der-phi-f}
D^r\phi(x)[v]^r = \langle \nabla^{r+1} f(x)(v)^r, u\rangle\qquad\forall 1\le r\le p-1, v\in\R^n.    
\end{align}
Since $f$ is $p$th-order continuously differentiable, it follows that $\phi$ is $(p-1)$th-order continuously differentiable, and that
\begin{align}\label{p-p-1th-der-phi-f}
\|D^{p-1}\phi(y)-D^{p-1}\phi(x)\|_{(p-1)} = \|u\|\|D^p f(y)-D^p f(x)\|_{(p)}\qquad \forall x,y\in\R^n.
\end{align}
Fix any $x,y\in\R^n$. Using Taylor's theorem, we have 
\begin{align}
&\phi(y) = \phi(x) + \sum_{r=1}^{p-2}\frac{1}{r!} D^r\phi(x)[y-x]^r  + \frac{1}{(p-2)!} \int_0^1(1-t)^{p-2}D^{p-1} \phi(x+t(y-x))[y-x]^{p-1}\mathrm{d}t\nonumber\\
& = \phi(x) + \sum_{r=1}^{p-1}\frac{1}{r!} D^r\phi(x)[y-x]^r \nonumber\\
&\qquad + \frac{1}{(p-2)!} \int_0^1(1-t)^{p-2}(D^{p-1} \phi(x+t(y-x)) - D^{p-1} \phi(x))[y-x]^{p-1}\mathrm{d}t,\label{p-phi-taylor}
\end{align}
where the second equation is due to $\int^1_0(1-t)^{p-2}\mathrm{d}t=1/(p-1)$. Combining these, we obtain that 
\begin{align*}
&\Big|\Big\langle \nabla f(y) - \nabla f(x) - \sum_{r=1}^{p-1}\frac{1}{r!} \nabla^{r+1}f(x)(y-x)^r, u\Big\rangle \Big| \overset{\cref{pth-der-phi-f}}{=} \Big|\phi(y) - \phi(x) - \sum_{r=1}^{p-1}\frac{1}{r!} D^r\phi(x)[y-x]^r \Big|\\
&\overset{\cref{p-phi-taylor}}{=}\Big|\frac{1}{(p-2)!} \int_0^1(1-t)^{p-2}(D^{p-1} \phi(x+t(y-x)) - D^{p-1} \phi(x))[y-x]^{p-1}\mathrm{d}t\Big|\\
&\overset{\cref{def:pnorm}}{\le} \frac{1}{(p-2)!}\|y-x\|^{p-1}\int^1_0(1-t)^{p-2}\|D^{p-1} \phi(x+t(y-x)) - D^{p-1} \phi(x)\|_{(p-1)}\mathrm{d}t\\
&\overset{\cref{p-p-1th-der-phi-f}}{=} \frac{1}{(p-2)!}\|y-x\|^{p-1}\|u\|\int^1_0(1-t)^{p-2}\|D^p f(x+t(y-x)) - D^pf(x)\|_{(p)}\mathrm{d}t\\
&{\le} \frac{1}{(p-2)!}L_p\|y-x\|^p\|u\|\int^1_0(1-t)^{p-2}t\mathrm{d}t = \frac{1}{p!}L_p\|y-x\|^p\|u\|,
\end{align*}
where the last inequality is due to \cref{asp:pth-smth}, and the last equality is due to $\int^1_0(1-t)^{p-2}t\mathrm{d}t=1/(p(p-1))$. Taking the maximum of this inequality over all $u$ with $\|u\|\le 1$, we obtain that \cref{ineq:pth-desc} holds.
\end{proof}

\begin{proof}[Proof of \cref{lem:1-desc}]
Fix any $k\ge0$. Using \cref{ineq:1st-desc} with $(x,y)=(x^k,x^{k+1})$, we obtain that
\begin{align*}
&f(x^{k+1}) \overset{\cref{ineq:1st-desc}}{\le} f(x^k) + \langle\nabla f(x^k), x^{k+1} - x^k \rangle + \frac{L_1}{2}\|x^{k+1}-x^k\|^2\\
&= f(x^k) + \langle m^k, x^{k+1} - x^k\rangle + \langle \nabla f(x^k) - m^k, x^{k+1} - x^k\rangle + \frac{L_1}{2}\|x^{k+1}-x^k\|^2\\
&\overset{\cref{update-xk}}{=} f(x^k) -\eta_k \|m^k\| -\frac{\eta_k}{\|m^k\|} \langle \nabla f(x^k) - m^k, m^k\rangle + \frac{L_1}{2}\eta_k^2\\
&\le f(x^k) - \eta_k\|m^k\| + \eta_k\|\nabla f(x^k) - m^k\| + \frac{L_1}{2}\eta_k^2\\
&\le f(x^k) - \eta_k\|\nabla f(x^k)\| + 2\eta_k\|\nabla f(x^k) - m^k\| + \frac{L_1}{2}\eta_k^2,
\end{align*}
where the second inequality follows from the Cauchy-Schwarz inequality, and the last inequality is due to the triangular inequality. Hence, \cref{lem:1-desc} holds as desired.
\end{proof}






\subsection{Proof of the main results in \cref{subsec:sfom-dem}}\label{subsec:proof-dem}

In this subsection, we prove \cref{lem:3rd-rec,thm:3rd-conv,cor:rate-3}. 

When \cref{asp:pth-smth} holds with $p=3$, it directly follows from \cref{lem:pth-desc} that
\begin{align}\label{ineq:3rd-desc}
\Big\|\nabla f(y) - \nabla f(x) - \nabla^2 f(x)(y-x) - \frac{1}{2}\nabla^3 f(x)(y-x)^2\Big\|\le \frac{L_3}{6}\|y-x\|^3\qquad\forall x,y\in\R^n.
\end{align}   

\begin{proof}[{Proof of \cref{lem:3rd-rec}}]
Fix any $k\ge0$. Notice from \cref{update-zk} with $q=2$ that
\begin{align}\label{3-z-x}
z^{k+1,1} - x^k = \frac{1}{\gamma_{k,1}}(x^{k+1} - x^k),\quad     z^{k+1,2} - x^k = \frac{1}{\gamma_{k,2}}(x^{k+1} - x^k).
\end{align}
By this, \cref{3rd-lss}, and \cref{update-mk}, one has that 
\begin{align*}
&m^{k+1} - \nabla f (x^{k+1}) \overset{ \cref{update-mk}}{=} (1 - \theta_{k,1} - \theta_{k,2})m^k + \theta_{k,1}G(z^{k+1,1};\xi^{k+1}) + \theta_{k,2}G(z^{k+1,2};\xi^{k+1}) - \nabla f (x^{k+1}) \\
&\overset{\cref{3rd-lss}}{=} (1 - \theta_{k,1} - \theta_{k,2})m^k + \theta_{k,1}G(z^{k+1,1};\xi^{k+1}) + \theta_{k,2}G(z^{k+1,2};\xi^{k+1}) - \nabla f (x^{k+1})\\
&\qquad + \Big(1 - \frac{\theta_{k,1}}{\gamma_{k,1}} - \frac{\theta_{k,2}}{\gamma_{k,2}}\Big)\nabla^2 f(x^k)(x^{k+1}-x^k) + \frac{1}{2}\Big(1 - \frac{\theta_{k,1}}{\gamma_{k,1}^2} - \frac{\theta_{k,2}}{\gamma_{k,2}^2}\Big)\nabla^3 f(x^k)(x^{k+1}-x^k)^2 \\
&= (1-\theta_{k,1}-\theta_{k,2})(m^k - \nabla f (x^k))\\
&\qquad + \theta_{k,1}( G(z^{k+1,1};\xi^{k+1}) - \nabla f(z^{k+1,1})) + \theta_{k,2}( G(z^{k+1,2};\xi^{k+1}) - \nabla f(z^{k+1,2})) \nonumber\\
&\qquad - (\nabla f(x^{k+1})-\nabla f(x^k) - \nabla^2 f(x^k)(x^{k+1}-x^k)  - \frac{1}{2}\nabla^3 f(x^k)(x^{k+1}-x^k)^2)\nonumber\\
&\qquad + \theta_{k,1}(\nabla f(z^{k+1,1})-\nabla f(x^k) - \frac{1}{\gamma_{k,1}}\nabla^2 f(x^k)(x^{k+1} -x^k) - \frac{1}{2\gamma_{k,1}^2}\nabla^3 f(x^k)(x^{k+1}-x^k)^2)\\
&\qquad + \theta_{k,2}(\nabla f(z^{k+1,2})-\nabla f(x^k) - \frac{1}{\gamma_{k,2}}\nabla^2 f(x^k)(x^{k+1} -x^k) - \frac{1}{2\gamma_{k,2}^2}\nabla^3 f(x^k)(x^{k+1}-x^k)^2)\\
& \overset{\cref{3-z-x}}{=} (1-\theta_{k,1}-\theta_{k,2})(m^k - \nabla f (x^k))\\
&\qquad + \theta_{k,1}( G(z^{k+1,1};\xi^{k+1}) - \nabla f(z^{k+1,1})) + \theta_{k,2}( G(z^{k+1,2};\xi^{k+1}) - \nabla f(z^{k+1,2})) \nonumber\\
&\qquad - (\nabla f(x^{k+1})-\nabla f(x^k) - \nabla^2 f(x^k)(x^{k+1}-x^k)  - \frac{1}{2}\nabla^3 f(x^k)(x^{k+1}-x^k)^2)\nonumber\\
&\qquad + \theta_{k,1}(\nabla f(z^{k+1,1})-\nabla f(x^k) - \nabla^2 f(x^k)(z^{k+1,1} -x^k) - \frac{1}{2}\nabla^3 f(x^k)(z^{k+1,1}-x^k)^2)\\
&\qquad + \theta_{k,2}(\nabla f(z^{k+1,2})-\nabla f(x^k) - \nabla^2 f(x^k)(z^{k+1,2} -x^k) - \frac{1}{2}\nabla^3 f(x^k)(z^{k+1,2}-x^k)^2).
\end{align*}
Taking the squared norm and the expectation with respect to $\xi^{k+1}$ for both sides of this equality, we obtain that for all $a>0$,
\begin{align}
&\mathbb{E}_{\xi^{k+1}}[\|m^{k+1} - \nabla f (x^{k+1})\|^2]\nonumber\\
&= \mathbb{E}_{\xi^{k+1}}[\|\theta_{k,1}( G(z^{k+1,1};\xi^{k+1}) - \nabla f(z^{k+1,1})) + \theta_{k,2}( G(z^{k+1,2};\xi^{k+1}) - \nabla f(z^{k+1,2}))\|^2]\nonumber\\
&\qquad + \|(1-\theta_{k,1}-\theta_{k,2})(m^k - \nabla f (x^k))\nonumber\\
&\qquad - (\nabla f(x^{k+1})-\nabla f(x^k) - \nabla^2 f(x^k)(x^{k+1}-x^k)  - \frac{1}{2}\nabla^3 f(x^k)(x^{k+1}-x^k)^2)\nonumber\\
&\qquad + \theta_{k,1}(\nabla f(z^{k+1,1})-\nabla f(x^k) - \nabla^2 f(x^k)(z^{k+1,1} -x^k) - \frac{1}{2}\nabla^3 f(x^k)(z^{k+1,1}-x^k)^2)\nonumber\\
&\qquad + \theta_{k,2}(\nabla f(z^{k+1,2})-\nabla f(x^k) - \nabla^2 f(x^k)(z^{k+1,2} -x^k) - \frac{1}{2}\nabla^3 f(x^k)(z^{k+1,2}-x^k)^2)\|^2\nonumber\\
&\le 2\theta_{k,1}^2\mathbb{E}_{\xi^{k+1}}[\| G(z^{k+1,1};\xi^{k+1})-\nabla f(z^{k+1,1})\|^2] + 2\theta_{k,2}^2\mathbb{E}_{\xi^{k+1}}[\| G(z^{k+1,2};\xi^{k+1})-\nabla f(z^{k+1,2})\|^2] \nonumber\\
&\qquad  + (1-\theta_{k,1}-\theta_{k,2})^2(1+a)\|m^k - \nabla f (x^k)\|^2\nonumber \\
&\qquad + (1+1/a)\|\nabla f(x^{k+1})-\nabla f(x^k) - \nabla^2 f(x^k)(x^{k+1}-x^k)  - \frac{1}{2}\nabla^3 f(x^k)(x^{k+1}-x^k)^2\nonumber\\
&\qquad - \theta_{k,1}(\nabla f(z^{k+1,1})-\nabla f(x^k) - \nabla^2 f(x^k)(z^{k+1,1} -x^k) - \frac{1}{2}\nabla^3 f(x^k)(z^{k+1,1}-x^k)^2)\nonumber\\
&\qquad - \theta_{k,2}(\nabla f(z^{k+1,2})-\nabla f(x^k) - \nabla^2 f(x^k)(z^{k+1,2} -x^k) - \frac{1}{2}\nabla^3 f(x^k)(z^{k+1,2}-x^k)^2)\|^2\nonumber\\
&\le 2(\theta_{k,1}^2+ \theta_{k,2}^2)\sigma^2  + (1-\theta_{k,1}-\theta_{k,2})^2(1+a)\|m^k - \nabla f (x^k)\|^2 \nonumber\\
&\qquad + 3(1+1/a)\|\nabla f(x^{k+1})-\nabla f(x^k) - \nabla^2 f(x^k)(x^{k+1}-x^k)  - \frac{1}{2}\nabla^3 f(x^k)(x^{k+1}-x^k)^2\|^2\nonumber\\
&\qquad + 3(1+1/a)\theta_{k,1}^2\|\nabla f(z^{k+1,1})-\nabla f(x^k) - \nabla^2 f(x^k)(z^{k+1,1} -x^k) - \frac{1}{2}\nabla^3 f(x^k)(z^{k+1,1}-x^k)^2\|^2\nonumber\\
&\qquad + 3(1+1/a)\theta_{k,2}^2\|\nabla f(z^{k+1,2})-\nabla f(x^k) - \nabla^2 f(x^k)(z^{k+1,2} -x^k) - \frac{1}{2}\nabla^3 f(x^k)(z^{k+1,2}-x^k)^2\|^2,\label{upbd-expect-mk-gradf}
\end{align}
where the equality is due to the first relation in \cref{est-unbias-varbd}, the first inequality is due to $\|u+v\|^2\le(1+a)\|u\|^2+(1+1/a)\|v\|^2$ for all $u,v\in\mathbb{R}^n$ and $a>0$, and the last inequality is due to $\|u+v+w\|^2\le3\|u\|^2+3\|v\|^2+3\|w\|^2$ for all $u,v,w\in\mathbb{R}^n$, and the second relation in \cref{est-unbias-varbd}. In addition, recall from \cref{update-xk} and \cref{3-z-x} that
\begin{align*}
\|x^{k+1} - x^k\|=\eta_k,\quad\|z^{k+1,1}-x^k\|=\eta_k/\gamma_{k,1},\quad\|z^{k+1,2}-x^k\|= \eta_k/\gamma_{k,2}.   
\end{align*}
It then follows from \cref{ineq:3rd-desc} with $(x,y)=(x^k,x^{k+1}),(x^k,z^{k+1,1}),(x^k,z^{k+1,2})$ that
\begin{align*}
&\|\nabla f(x^{k+1})-\nabla f(x^k) - \nabla^2 f(x^k)(x^{k+1}-x^k)  - \frac{1}{2}\nabla^3 f(x^k)(x^{k+1}-x^k)^2\|\le\frac{L_3}{6}\|x^{k+1}-x^k\|^3=\frac{L_3\eta_k^3}{6},\\
&\|\nabla f(z^{k+1,1})-\nabla f(x^k) - \nabla^2 f(x^k)(z^{k+1,1} -x^k) - \frac{1}{2}\nabla^3 f(x^k)(z^{k+1,1}-x^k)^2\|\le\frac{L_3}{6}\|z^{k+1,1}-x^k\|^3=\frac{L_3\eta_k^3}{6\gamma_{k,1}^3},\\
&\|\nabla f(z^{k+1,2})-\nabla f(x^k) - \nabla^2 f(x^k)(z^{k+1,2} -x^k) - \frac{1}{2}\nabla^3 f(x^k)(z^{k+1,2}-x^k)^2\|\le\frac{L_3}{6}\|z^{k+1,2}-x^k\|^3=\frac{L_3\eta_k^3}{6\gamma_{k,2}^3}.
\end{align*}
Substituting these inequalities into \cref{upbd-expect-mk-gradf} and letting $a=(\theta_{k,1}+\theta_{k,2})/(1 - \theta_{k,1} - \theta_{k,2})$, we obtain that
\begin{align*}
&\mathbb{E}_{\xi^{k+1}}[\|m^{k+1} - \nabla f (x^{k+1})\|^2]\le (1-\theta_{k,1}-\theta_{k,2})\|m^k - \nabla f (x^k)\|^2\\
&\qquad\qquad\qquad\qquad + \frac{L_3^2\eta_k^6\theta_{k,1}^2}{12\gamma_{k,1}^6(\theta_{k,1}+\theta_{k,2})} + \frac{L_3^2\eta_k^6\theta_{k,2}^2}{12\gamma_{k,2}^6(\theta_{k,1}+\theta_{k,2})} + \frac{L_3^2\eta_k^6}{12(\theta_{k,1}+\theta_{k,2})} + 2(\theta_{k,1}^2+\theta_{k,2}^2) \sigma^2.
\end{align*}    
Hence, the conclusion of this lemma holds as desired.
\end{proof}

We are now ready to prove \cref{thm:3rd-conv}.

\begin{proof}[{Proof of \cref{thm:3rd-conv}}]

It follows from \cref{ineq:1st-desc-1}, \cref{rec:3rd-m}, \cref{def:3rd-Pk}, and \cref{thetak-3rd} that for all $k\ge0$,
\begin{align}
&\E_{\xi^{k+1}}[P_{k+1}]\overset{\cref{def:3rd-Pk}}{=} \E_{\xi^{k+1}}[f(x^{k+1}) + p_{k+1}\|m^{k+1} - \nabla f (x^{k+1})\|^2]\nonumber  \\
&\overset{\cref{ineq:1st-desc-1}\cref{rec:3rd-m}}{\le} f(x^k) - \eta_k \|\nabla f(x^k)\| + 2\eta_k\|\nabla f(x^k) - m^k\| + \frac{L_1}{2}\eta_k^2 + (1-\theta_{k,1}-\theta_{k,2})p_{k+1}\|m^k - \nabla f (x^k)\|^2\nonumber \\
&\qquad + \frac{L_3^2\eta_k^6\theta_{k,1}^2p_{k+1}}{12\gamma_{k,1}^6(\theta_{k,1} + \theta_{k,2})} + \frac{L_3^2\eta_k^6\theta_{k,2}^2p_{k+1}}{12\gamma_{k,2}^6(\theta_{k,1} + \theta_{k,2})} + \frac{L_3^2\eta_k^6p_{k+1}}{12(\theta_{k,1} + \theta_{k,2})} + 2(\theta_{k,1}^2+\theta_{k,2}^2)p_{k+1}\sigma^2\nonumber\\
&\overset{\cref{thetak-3rd}}{\le} f(x^k) - \eta_k \|\nabla f(x^k)\| + 2\eta_k\|\nabla f(x^k) - m^k\| + \frac{L_1}{2}\eta_k^2 + (1-(\theta_{k,1}+\theta_{k,2})/2)p_{k}\|m^k - \nabla f (x^k)\|^2\nonumber \\
&\qquad + \frac{L_3^2\eta_k^6\theta_{k,1}^2p_{k+1}}{12\gamma_{k,1}^6(\theta_{k,1} + \theta_{k,2})} + \frac{L_3^2\eta_k^6\theta_{k,2}^2p_{k+1}}{12\gamma_{k,2}^6(\theta_{k,1} + \theta_{k,2})} + \frac{L_3^2\eta_k^6p_{k+1}}{12(\theta_{k,1} + \theta_{k,2})} + 2(\theta_{k,1}^2+\theta_{k,2}^2)p_{k+1}\sigma^2.\label{3rd-pt-upbd}
\end{align}
By the Young's inequality, one has that
\begin{align*}
2\eta_k\|\nabla f(x^k) - m^k\| \le \frac{(\theta_{k,1} + \theta_{k,2})p_k}{2}\|\nabla f(x^k) - m^k\|^2 + \frac{2\eta_k^2}{(\theta_{k,1} + \theta_{k,2})p_k}\qquad\forall k\ge0,
\end{align*}
which together with \cref{3rd-pt-upbd} implies that for all $k\ge0$,
\begin{align*}
&\E_{\xi^{k+1}}[P_{k+1}]\le f(x^k) + p_k \|m^k - \nabla f (x^k)\|^2 - \eta_k\|\nabla f(x^k)\| + \frac{L_1}{2}\eta_k^2 + \frac{2\eta_k^2}{(\theta_{k,1} + \theta_{k,2})p_k}\\
&\qquad + \frac{L_3^2\eta_k^6\theta_{k,1}^2p_{k+1}}{12\gamma_{k,1}^6(\theta_{k,1} + \theta_{k,2})} + \frac{L_3^2\eta_k^6\theta_{k,2}^2p_{k+1}}{12\gamma_{k,2}^6(\theta_{k,1} + \theta_{k,2})} + \frac{L_3^2\eta_k^6p_{k+1}}{12(\theta_{k,1} + \theta_{k,2})} + 2(\theta_{k,1}^2+\theta_{k,2}^2)p_{k+1}\sigma^2\\
&\overset{\cref{def:3rd-Pk}}{=} P_k - \eta_k\|\nabla f(x^k)\| + \frac{L_1}{2}\eta_k^2 + \frac{2\eta_k^2}{(\theta_{k,1} + \theta_{k,2})p_k}\\
&\qquad + \frac{L_3^2\eta_k^6\theta_{k,1}^2p_{k+1}}{12\gamma_{k,1}^6(\theta_{k,1} + \theta_{k,2})} + \frac{L_3^2\eta_k^6\theta_{k,2}^2p_{k+1}}{12\gamma_{k,2}^6(\theta_{k,1} + \theta_{k,2})} + \frac{L_3^2\eta_k^6p_{k+1}}{12(\theta_{k,1} + \theta_{k,2})} + 2(\theta_{k,1}^2+\theta_{k,2}^2)p_{k+1}\sigma^2.
\end{align*}
Hence, the conclusion of this theorem holds as desired.
\end{proof}

The following lemma provides some useful properties of $\{(\gamma_{k,t},\theta_{k,t})\}_{1\le t\le 2,k\ge0}$ defined in \cref{eta-gamma-3} and \cref{theta-3}, and provides a sequence $\{p_k\}_{k\ge0}$ such that \cref{thetak-3rd} holds.

\begin{lemma}\label{lem:ppt-3}
Let $\{(\gamma_{k,t},\theta_{k,t})\}_{1\le t\le 2, k\ge0}$ be defined in \cref{eta-gamma-3} and \cref{theta-3}, and let $\{p_k\}_{k\ge0}$ be defined as
\begin{align}\label{p-3}
p_k=(k+3)^{1/5} \qquad\forall k\ge0.
\end{align}
Then, \cref{3rd-lss} holds, and it holds that
\begin{align}
&\theta_{k,1} +\theta_{k,2} \in\Big(\frac{1}{(k+3)^{3/5}},\frac{3}{2(k+3)^{3/5}}\Big)\subset(0,1)\qquad\forall k\ge0,\label{ppt-3-sum-theta-square-bd}\\
&\theta_{k,1}^2 \le \frac{4}{(k+3)^{6/5}},\quad \theta_{k,2}^2\le\frac{1}{4(k+3)^{6/5}}\qquad\forall k\ge0.\label{ppt-3-sum-theta-square-bd-2}
\end{align}
In addition, \cref{thetak-3rd} holds.
\end{lemma}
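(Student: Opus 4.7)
The plan is to set $a\doteq(k+3)^{3/5}$ so that the parameters take the clean forms $\gamma_{k,1}=1/a$, $\gamma_{k,2}=1/(2a)$, $\theta_{k,1}=(2a-1)/a^2$, and $\theta_{k,2}=(1-a)/(2a^2)$. Since $a\ge 3^{3/5}>1$ for every $k\ge 0$, all subsequent estimates sit in the regime $a>1$. Under this substitution each of the four claims reduces to a short algebraic verification, and I would handle them in the order listed.

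First, I would check the two linear equations in \cref{3rd-lss} by direct substitution: $\theta_{k,1}/\gamma_{k,1}+\theta_{k,2}/\gamma_{k,2}=(2a-1)/a+(1-a)/a=1$, and $\theta_{k,1}/\gamma_{k,1}^2+\theta_{k,2}/\gamma_{k,2}^2=(2a-1)+2(1-a)=1$. Next I would compute $s_k\doteq\theta_{k,1}+\theta_{k,2}=(3a-1)/(2a^2)$ and compare it against $1/a$ and $3/(2a)$; the upper bound $s_k<3/(2a)$ is immediate, and the lower bound $s_k>1/a$ reduces to $3a-1>2a$, i.e., $a>1$, already noted. This establishes \cref{ppt-3-sum-theta-square-bd}. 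The two bounds in \cref{ppt-3-sum-theta-square-bd-2} then follow from the elementary inequalities $(2a-1)^2\le 4a^2$ and $(a-1)^2\le a^2$, both valid for $a\ge 1$.

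The only step that needs any thought is the potential-coefficient recursion \cref{thetak-3rd}, namely $(1-s_k)p_{k+1}\le(1-s_k/2)p_k$. Rearranging, it is equivalent to $s_k(p_{k+1}-p_k/2)\ge p_{k+1}-p_k$. Since $p_{k+1}\ge p_k$, the left side is at least $s_k p_k/2$, so it suffices to show $s_k p_k/2\ge p_{k+1}-p_k$. Concavity of $\tau\mapsto\tau^{1/5}$ (equivalently, the mean value theorem) yields $p_{k+1}-p_k=(k+4)^{1/5}-(k+3)^{1/5}\le\tfrac{1}{5}(k+3)^{-4/5}$, which combined with $p_k=(k+3)^{1/5}$ turns the task into verifying $s_k\ge\tfrac{2}{5}(k+3)^{-1}$. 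This follows from the already-proved lower bound $s_k>(k+3)^{-3/5}$ together with $(k+3)^{3/5}\le k+3$, which holds since $k+3\ge 1$.

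The main (and only modest) obstacle will simply be keeping the chain of inequalities in the recursion step clean; all of the work is elementary once the substitution $a=(k+3)^{3/5}$ is in place, and no sharper estimate than the crude MVT bound on $p_{k+1}-p_k$ is required.
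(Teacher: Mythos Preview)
Your proposal is correct and follows essentially the same approach as the paper: both verify \cref{3rd-lss}, \cref{ppt-3-sum-theta-square-bd}, and \cref{ppt-3-sum-theta-square-bd-2} by direct algebra, and both establish \cref{thetak-3rd} by combining the lower bound $s_k>(k+3)^{-3/5}$ with a growth estimate on $p_k$ (the paper bounds the ratio $p_{k+1}/p_k\le 1+\tfrac{1}{5(k+3)}$ while you bound the difference $p_{k+1}-p_k\le\tfrac{1}{5}(k+3)^{-4/5}$, which are equivalent manipulations). Your substitution $a=(k+3)^{3/5}$ makes the algebra cleaner than the paper's terse presentation, but the underlying argument is the same.
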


\begin{proof}
Fix any $k\ge0$. By the definition of $\{(\gamma_{k,t},\theta_{k,t})\}_{1\le t\le 2, k\ge0}$ in \cref{eta-gamma-3} and \cref{theta-3}, one can see that \cref{3rd-lss} holds. In addition, it follows from \cref{theta-3} that \cref{ppt-3-sum-theta-square-bd} and \cref{ppt-3-sum-theta-square-bd-2} hold. We next prove \cref{thetak-3rd}. Note from \cref{ppt-3-sum-theta-square-bd} that $\theta_{k,1} +\theta_{k,2}>1/(k+3)^{3/5}$, which implies that
\begin{align}\label{pf-frac-two-theta}
\frac{1-(\theta_{k,1} +\theta_{k,2})/2}{1-(\theta_{k,1} +\theta_{k,2})} = 1 + \frac{1}{2(1/(\theta_{k,1} +\theta_{k,2})-1)} >  1 + \frac{\theta_{k,1} +\theta_{k,2}}{2} >1 + \frac{1}{2(k+3)^{3/5}}.
\end{align}
In addition, using \cref{p-3}, we have that
\begin{align*}
\frac{p_{k+1}}{p_k} \le \Big(1+\frac{1}{k+3}\Big)^{1/5} \le 1 + \frac{1}{5(k+3)},
\end{align*}
where the second inequality is due to $(1+a)^{1/5}\le 1+a/5$ for all $a\ge0$. This together with \cref{pf-frac-two-theta} and the fact that $5(k+3)>2(k+3)^{3/5}$ implies that \cref{thetak-3rd} holds as desired.    
\end{proof}

We are now ready to prove \cref{cor:rate-3}.

\begin{proof}[{Proof of \cref{cor:rate-3}}]
Notice from \cref{lem:ppt-3} that $\{(\eta_k,\gamma_{k,t},\theta_{k,t})\}_{1\le t\le2,k\ge0}$ defined in \cref{eta-gamma-3} and \cref{theta-3} along with $\{p_k\}_{k\ge0}$ defined in \cref{p-3} satisfies the assumptions in \cref{thm:3rd-conv}. Thus, \cref{3rd-stat-upbd} holds. By summing \cref{3rd-stat-upbd} over $k=0,\ldots, K-1$, and taking expectation with respect to  $\{\xi^k\}_{k=0}^{K-1}$, we obtain the following for all $K\ge1$: 
\begin{align}
&\E[P_K] \le \E[P_0] - \sum_{k=0}^{K-1}\eta_k\E[\|\nabla f(x^k)\|] + \sum_{k=0}^{K-1}\Big(\frac{L_1}{2}\eta_k^2 + \frac{2\eta_k^2}{(\theta_{k,1} + \theta_{k,2})p_k}\nonumber\\
&\qquad\qquad\qquad + \frac{L_3^2\eta_k^6\theta_{k,1}^2p_{k+1}}{12\gamma_{k,1}^6(\theta_{k,1} + \theta_{k,2})} + \frac{L_3^2\eta_k^6\theta_{k,2}^2p_{k+1}}{12\gamma_{k,2}^6(\theta_{k,1} + \theta_{k,2})}   + \frac{L_3^2\eta_k^6p_{k+1}}{12(\theta_{k,1} + \theta_{k,2})}+ 2(\theta_{k,1}^2+\theta_{k,2}^2)p_{k+1}\sigma^2\Big).\label{3rd-stat-upbd-0}
\end{align}
Notice from \cref{def:3rd-Pk} that $\E[P_0]=f(x^0)+p_0\E_{\xi^0}[\|G(x^0;\xi^0)-\nabla f(x^0)\|^2]\le f(x^0)+p_0\sigma^2$ and $\E[P_K]\ge f_{\mathrm{low}}$. Also, observe from \cref{p-3} that $p_{k+1}\le 2p_k$ for all $k\ge0$. Using these, \cref{3rd-stat-upbd-0}, and the fact that $\{\eta_k\}_{k\ge0}$ is nonincreasing, we obtain the following for all $K\ge1$:
\begin{align}
&f_{\mathrm{low}} \le f(x^0) + p_0\sigma^2 - \eta_{K-1}\sum_{k=0}^{K-1}\E[\|\nabla f(x^k)\|] + \sum_{k=0}^{K-1}\Big(\frac{L_1}{2}\eta_k^2 + \frac{2\eta_k^2}{(\theta_{k,1} + \theta_{k,2})p_k}\nonumber\\
&\qquad\qquad\qquad + \frac{L_3^2\eta_k^6\theta_{k,1}^2p_k}{6\gamma_{k,1}^6(\theta_{k,1} + \theta_{k,2})} + \frac{L_3^2\eta_k^6\theta_{k,2}^2p_k}{6\gamma_{k,2}^6(\theta_{k,1} + \theta_{k,2})}   + \frac{L_3^2\eta_k^6p_k}{6(\theta_{k,1} + \theta_{k,2})}+ 4(\theta_{k,1}^2+\theta_{k,2}^2)p_k\sigma^2\Big).\label{3rd-stat-upbd-1}
\end{align}
Rearranging the terms of \cref{3rd-stat-upbd-1} and substituting \cref{eta-gamma-3}, \cref{p-3}, \cref{ppt-3-sum-theta-square-bd}, and \cref{ppt-3-sum-theta-square-bd-2}, we obtain the following for all $K\ge5$:
\begin{align}
&\frac{1}{K}\sum_{k=0}^{K-1}\E[\|\nabla f(x^k)\|]\overset{\cref{3rd-stat-upbd-1}}{\le} \frac{f(x^0) - f_{\mathrm{low}} + p_0\sigma^2}{K\eta_{K-1}} + \frac{1}{K\eta_{K-1}}\sum_{k=0}^{K-1}\Big(\frac{L_1}{2}\eta_k^2 + \frac{2\eta_k^2}{(\theta_{k,1} + \theta_{k,2})p_k}\nonumber\\
&\qquad\qquad\qquad + \frac{L_3^2\eta_k^6\theta_{k,1}^2p_k}{6\gamma_{k,1}^6(\theta_{k,1} + \theta_{k,2})} + \frac{L_3^2\eta_k^6\theta_{k,2}^2p_k}{6\gamma_{k,2}^6(\theta_{k,1} + \theta_{k,2})}   + \frac{L_3^2\eta_k^6p_{k+1}}{6(\theta_{k,1} + \theta_{k,2})}+ 4(\theta_{k,1}^2+\theta_{k,2}^2)p_k\sigma^2\Big)\nonumber\\
&\overset{\cref{eta-gamma-3}\cref{p-3}\cref{ppt-3-sum-theta-square-bd}}{=} \frac{(K+2)^{7/10}(f(x^0) - f_{\mathrm{low}} + 3^{1/5}\sigma^2)}{K}\nonumber\\
&\qquad\qquad\qquad + \frac{(K+2)^{7/10}}{K}\sum_{k=0}^{K-1}\Big(\frac{2+10L_3^2/3 +17\sigma^2 }{k+3} + \frac{L_1}{2(k+3)^{7/5}} + \frac{L_3^2}{6(k+3)^{17/5}}\Big)\nonumber\\
&< \frac{2(f(x^0) - f_{\mathrm{low}} + 2\sigma^2)}{K^{3/10}} + \frac{2}{K^{3/10}} \sum_{k=0}^{K-1}\Big(\frac{2+10L_3^2/3 +17\sigma^2 }{k+3} + \frac{L_1}{2(k+3)^{7/5}} + \frac{L_3^2}{6(k+3)^{17/5}}\Big)\nonumber\\
&\le \frac{2(f(x^0) - f_{\mathrm{low}} + 2\sigma^2 + L_1 + L_3^2/120)}{K^{3/10}} + \frac{2(2+ 10L_3^2/3 +17\sigma^2)\ln(2K/5+1)}{K^{3/10}}\nonumber\\
&< 4( f(x^0) - f_{\mathrm{low}} + 19\sigma^2 + L_1  + 4L_3^2 + 2) K^{-3/10}\ln K \overset{\cref{M3}}{=} M_3 K^{-3/10}\ln K,\label{upbd-expec-gradfs}
\end{align}
where the second inequality is due to $(K+2)^{7/10}< 2K^{7/10}$ for all $K\ge5$, the third inequality follows from $\sum_{k=0}^{K-1}1/(k+3)\le \ln(2K/5+1)$, $\sum_{k=0}^{K-1}1/(k+3)^{7/5}\le (5/2)^{3/5}<2$, and $\sum_{k=0}^{K-1}1/(k+3)^{17/5}\le (5/12)(2/5)^{12/5}<1/20$ due to \cref{upbd:series-ka} with $(a,b)=(3,K+2)$ and $\alpha=1,7/5,17/5$, and the last inequality is due to $1<\ln(2K/5+1)<2\ln K$ for all $K\ge5$. Since $\iota_K$ is uniformly drawn from $\{0,\ldots,K-1\}$, it follows that
\begin{align}\label{upbd-expec-kappa-gradfs}
\E[\|\nabla f(x^{\iota_K})\|]=\frac{1}{K}\sum_{k=0}^{K-1}\E[\|\nabla f(x^k)\|]\overset{\cref{upbd-expec-gradfs}}{\le} M_3 K^{-3/10}\ln K\qquad\forall K\ge5.
\end{align}
In view of \cref{lem:rate-complexity} with $(\alpha,u,v)=(3/10,3\epsilon/(20M_3),K)$, one can see that
\begin{align*}
K^{-3/10}\ln K \le \frac{\epsilon}{M_3}\qquad\forall K\ge \Big(\frac{20M_3}{3\epsilon}\ln\Big(\frac{20M_3}{3\epsilon}\Big)\Big)^{10/3},
\end{align*}
which together with \cref{upbd-expec-kappa-gradfs} proves \cref{expect-kappa-3} as desired. Hence, this theorem holds as desired.
\end{proof}

\subsection{Proof of the main results in \cref{subsec:sfom-mem}}\label{subsec:proof-mem}

In this subsection, we prove \cref{lem:p-sol,lem:pth-rec,thm:pth-conv,cor:rate-p}.

\begin{proof}[{Proof of \cref{lem:p-sol}}]
We first prove that the solution to \cref{pth-lss} is unique and can be written as \cref{theta-kt-explicit}. For convenience, we denote the coefficient matrix in \cref{pth-lss} as
\begin{align*}
\Gamma \doteq \begin{bmatrix}
1/\gamma_{k,1} & 1/\gamma_{k,2} & \cdots & 1/\gamma_{k,q}\\ 
1/\gamma_{k,1}^2 & 1/\gamma_{k,2}^2 & \cdots & 1/\gamma_{k,q}^2\\ 
\vdots&\vdots&\ddots & \vdots\\
1/\gamma_{k,1}^q & 1/\gamma_{k,2}^q & \cdots & 1/\gamma_{k,q}^q
\end{bmatrix}\in\R^{q\times q}.
\end{align*} 
In addition, we define a matrix $V\in\R^{q\times q}$ such that its $t$th row $[v_{t1}\ \cdots\ v_{tq}]$ is defined according to the following polynomial
\begin{align}\label{def:ht-alpha}
h_t(\alpha) \doteq \frac{\alpha\prod_{1\le s\le q, s\neq t}(\alpha-1/\gamma_{k,s})}{1/\gamma_{k,t}\prod_{1\le s\le q, s\neq t}(1/\gamma_{k,t}-1/\gamma_{k,s})}=v_{t1}\alpha + v_{t2}\alpha^2 + \cdots + v_{tq}\alpha^q\qquad \forall 1\le t\le q,    
\end{align}
which satisfies $h_t(1/\gamma_{k,t})=1$ and $h_t(1/\gamma_{k,s})=0$ for all $s\neq t$. By the definitions of $h_t$, $\Gamma$, and $V$, one has that 
\begin{align*}
V\Gamma & = \begin{bmatrix}
\sum_{s=1}^q v_{1s}/\gamma_{k,1}^s& \sum_{s=1}^q v_{1s}/\gamma_{k,2}^s&\cdots& \sum_{s=1}^q v_{1s}/\gamma_{k,q}^s\\
\sum_{s=1}^q v_{2s}/\gamma_{k,1}^s& \sum_{s=1}^q v_{2s}/\gamma_{k,2}^s&\cdots& \sum_{s=1}^q v_{2s}/\gamma_{k,q}^s\\
\vdots&\vdots&\ddots&\vdots\\
\sum_{s=1}^q v_{qs}/\gamma_{k,1}^s& \sum_{s=1}^q v_{qs}/\gamma_{k,2}^s&\cdots& \sum_{s=1}^q v_{qs}/\gamma_{k,q}^s
\end{bmatrix}\\
& =     \begin{bmatrix}
h_1(1/\gamma_{k,1})& h_1(1/\gamma_{k,2}) &\cdots& h_1(1/\gamma_{k,q})\\
h_2(1/\gamma_{k,1})& h_2(1/\gamma_{k,2}) &\cdots& h_2(1/\gamma_{k,q})\\
\vdots&\vdots&\ddots&\vdots\\
h_q(1/\gamma_{k,1})& h_q(1/\gamma_{k,2}) &\cdots& h_q(1/\gamma_{k,q})
\end{bmatrix} = I,
\end{align*}
where $I$ is a $q\times q$ identity matrix. Therefore, we have that $V=\Gamma^{-1}$. In view of this and the definition of $V$, one can see that the solution to \cref{pth-lss} is unique and can be written as
\begin{align}\label{pf-exp-thetakt-ht}
\begin{bmatrix}
\theta_{k,1}\\
\theta_{k,2}\\
\vdots\\
\theta_{k,q}
\end{bmatrix} = V\begin{bmatrix}
1\\
1\\
\vdots\\
1
\end{bmatrix}  = \begin{bmatrix}
h_1(1)\\
h_2(1)\\
\vdots\\
h_q(1)
\end{bmatrix},
\end{align}
which together with the definition of $h_t$ implies that \cref{theta-kt-explicit} holds as desired.

We then prove that $\theta_{k,t}>0$ for all odd $t$ and $\theta_{k,t}<0$ for all even $t$. Since $0<\gamma_{k,q}<\cdots<\gamma_{k,1}<1$, it follows that
\begin{align*}
\mathrm{sgn}\Big(\prod_{1\le s\le q,s\neq t}(1-1/\gamma_{k,s})\Big) = (-1)^{q-1},\quad \mathrm{sgn}\Big(\prod_{1\le s\le q,s\neq t}(1/\gamma_{k,t}-1/\gamma_{k,s})\Big) = (-1)^{q-t},
\end{align*}
which along with \cref{theta-kt-explicit} implies that $\mathrm{sgn}(\theta_{k,t})=(-1)^{t-1}$, and thus, $\theta_{k,t}>0$ for all odd $t$ and $\theta_{k,t}<0$ for all even $t$.

We next prove \cref{sum-q-theta}. Recall from \cref{pf-exp-thetakt-ht} that $\theta_{k,t}=h_t(1)$ for each $1\le t\le q$.
Let $h(\alpha)\doteq\sum_{t=1}^qh_t(\alpha)$. It then follows that 
\begin{align}\label{sum-theta-kt}
\sum_{t=1}^q\theta_{k,t} = \sum_{t=1}^q h_t(1) = h(1).    
\end{align}
Also, in view of the fact that the polynomial $h_t(\alpha)$ satisfies $h_t(1/\gamma_{k,t})=1$ and $h_t(1/\gamma_{k,s})=0$ for all $1\le s\le q$ and $s\neq t$, one can see that $h(\alpha)$ satisfies $h(0)=0$ and $h(1/\gamma_{k,t})=1$ for all $1\le t\le q$. Using \cref{lem:lag-inter} and the fact that $1/\gamma_{k,t}$, $1\le t\le q$, take distinct values, we obtain that $h(\alpha)$ is unique and can be expressed as 
\begin{align*}
h(\alpha) = 1- \frac{\prod_{t=1}^q(1/\gamma_{k,t}-\alpha)}{\prod_{t=1}^q1/\gamma_{k,t}},
\end{align*}
which along with \cref{sum-theta-kt} proves \cref{sum-q-theta} as desired. Hence, this lemma holds as desired.
\end{proof}

\begin{proof}[{Proof of \cref{lem:pth-rec}}]
Fix any $k\ge0$. Notice from \cref{update-zk} with $q=p-1$ that
\begin{align}\label{p-z-x}
z^{k+1,t} - x^k = \frac{1}{\gamma_{k,t}}(x^{k+1} - x^k)\qquad\forall 1\le t\le p-1.
\end{align}
By this, $q=p-1$, \cref{update-mk}, and \cref{pth-lss}, one has that 
\begin{align*}
&m^{k+1} - \nabla f (x^{k+1}) \overset{ \cref{update-mk}}{=} \Big(1 - \sum_{t=1}^{p-1}\theta_{k,t}\Big)m^k + \sum_{t=1}^{p-1} \theta_{k,t}G(z^{k+1,t};\xi^{k+1}) - \nabla f (x^{k+1}) \\
&\overset{\cref{pth-lss}}{=} \Big(1 - \sum_{t=1}^{p-1}\theta_{k,t}\Big)m^k + \sum_{t=1}^{p-1} \theta_{k,t}G(z^{k+1,t};\xi^{k+1}) - \nabla f(x^{k+1})\\
&\qquad + \sum_{r=2}^{p}\frac{1}{(r-1)!}\Big(1-\sum_{t=1}^{p-1}\frac{\theta_{k,t}}{\gamma_{k,t}^{r-1}}\Big)\nabla^r f(x^k)(x^{k+1}-x^k)^{r-1}\\
&= \Big(1 - \sum_{t=1}^{p-1}\theta_{k,t}\Big)(m^k - \nabla f (x^k)) + \sum_{t=1}^{p-1}\theta_{k,t}( G(z^{k+1,t};\xi^{k+1}) - \nabla f(z^{k+1,t}))\\
&\qquad - \Big(\nabla f(x^{k+1}) - \sum_{r=1}^p\frac{1}{(r-1)!}\nabla^r f(x^k)(x^{k+1}-x^k)^{r-1}\Big)\nonumber\\
&\qquad + \sum_{t=1}^{p-1}\theta_{k,t}\Big(\nabla f(z^{k+1,t})- \sum_{r=1}^p\frac{1}{(r-1)!\gamma_{k,t}^{r-1}}\nabla^r f(x^k)(x^{k+1}-x^k)^{r-1}\Big)\\
& \overset{\cref{p-z-x}}{=} \Big(1 - \sum_{t=1}^{p-1}\theta_{k,t}\Big)(m^k - \nabla f (x^k)) + \sum_{t=1}^{p-1}\theta_{k,t}(G(z^{k+1,t};\xi^{k+1}) - \nabla f(z^{k+1,t}))\\
&\qquad - \Big(\nabla f(x^{k+1}) - \sum_{r=1}^p\frac{1}{(r-1)!}\nabla^r f(x^k)(x^{k+1}-x^k)^{r-1}\Big)\nonumber\\
&\qquad + \sum_{t=1}^{p-1}\theta_{k,t}\Big(\nabla f(z^{k+1,t})- \sum_{r=1}^p\frac{1}{(r-1)!}\nabla^r f(x^k)(z^{k+1,t}-x^k)^{r-1}\Big).
\end{align*}
Taking the squared norm and the expectation with respect to $\xi^{k+1}$ for both sides of this equality, we obtain the following for all $a>0$,
\begin{align}
&\mathbb{E}_{\xi^{k+1}}[\|m^{k+1} - \nabla f (x^{k+1})\|^2]\nonumber\\
&=\mathbb{E}_{\xi^{k+1}}\Big[\Big\|\sum_{t=1}^{p-1}\theta_{k,t}( G(z^{k+1,t};\xi^{k+1}) - \nabla f(z^{k+1,t}))\Big\|^2\Big]\nonumber\\
&\qquad + \Big\|\Big(1 - \sum_{t=1}^{p-1}\theta_{k,t}\Big)(m^k - \nabla f (x^k))\nonumber\\
&\qquad - \Big(\nabla f(x^{k+1}) - \sum_{r=1}^p\frac{1}{(r-1)!}\nabla^r f(x^k)(x^{k+1}-x^k)^{r-1}\Big)\nonumber\\
&\qquad + \sum_{t=1}^{p-1}\theta_{k,t}\Big(\nabla f(z^{k+1,t})- \sum_{r=1}^p\frac{1}{(r-1)!}\nabla^r f(x^k)(z^{k+1,t}-x^k)^{r-1}\Big)\Big\|^2\nonumber\\
&\le (p-1)\sum_{t=1}^{p-1}\theta_{k,t}^2\E_{\xi^{k+1}}[\|G(z^{k+1,t};\xi^{k+1}) - \nabla f(z^{k+1,t})\|^2] + \Big(1-\sum_{t=1}^{p-1}\theta_{k,t}\Big)^2(1+a)\|m^k - \nabla f (x^k)\|^2\nonumber\\
&\qquad + (1+1/a)\Big\|\nabla f(x^{k+1}) - \sum_{r=1}^p\frac{1}{(r-1)!}\nabla^r f(x^k)(x^{k+1}-x^k)^{r-1} \nonumber\\
&\qquad -\sum_{t=1}^{p-1}\theta_{k,t}\Big(\nabla f(z^{k+1,t})- \sum_{r=1}^p\frac{1}{(r-1)!}\nabla^r f(x^k)(z^{k+1,t}-x^k)^{r-1}\Big)\Big\|^2\nonumber\\
&\le (p-1)\sigma^2\sum_{t=1}^{p-1}\theta_{k,t}^2 + \Big(1-\sum_{t=1}^{p-1}\theta_{k,t}\Big)^2(1+a)\|m^k - \nabla f (x^k)\|^2 \nonumber\\
&\qquad + p(1+1/a)\Big\|\nabla f(x^{k+1}) - \sum_{r=1}^p\frac{1}{(r-1)!}\nabla^r f(x^k)(x^{k+1}-x^k)^{r-1}\Big\|^2\nonumber\\
&\qquad + p(1+1/a)\sum_{t=1}^{p-1}\theta_{k,t}^2\Big\|\nabla f(z^{k+1,t})- \sum_{r=1}^p\frac{1}{(r-1)!}\nabla^r f(x^k)(z^{k+1,t}-x^k)^{r-1}\Big\|^2,\label{p-upbd-mk1-gfk1}
\end{align}
where the equality is due to the first relation in \cref{est-unbias-varbd}, the first inequality is due to $\|\sum_{t=1}^{p-1}u_t\|^2\le(p-1)\sum_{t=1}^{p-1}\|u_t\|^2$ for all $u_t\in\mathbb{R}^n$, $1\le t\le p-1$ and $\|u+v\|^2\le(1+a)\|u\|^2+(1+1/a)\|v\|^2$ for all $u,v\in\mathbb{R}^n$ and $a>0$, and the last inequality follows from $\|\sum_{t=1}^pu_t\|^2\le p\sum_{t=1}^p\|u_t\|^2$ for all $u_t\in\mathbb{R}^n$, $1\le t\le p$, and the second relation in \cref{est-unbias-varbd}. In addition, recall from \cref{update-xk} and \cref{p-z-x} that
\begin{align*}
\|x^{k+1} - x^k\|=\eta_k,\quad\|z^{k+1,t}-x^k\|=\eta_k/\gamma_{k,t}\qquad\forall 1\le t\le p-1.   
\end{align*}
It then follows from \cref{ineq:pth-desc} with $(x,y)=(x^k,x^{k+1})$  and $(x,y)=(x^k,z^{k+1,t})$ for all $1\le t\le p-1$ that
\begin{align*}
&\Big\|\nabla f(x^{k+1}) - \sum_{r=1}^p\frac{1}{(r-1)!}\nabla^r f(x^k)(x^{k+1}-x^k)^{r-1}\Big\|\le \frac{L_p}{p!} \|x^{k+1}-x^k\|^{p}  = \frac{L_p}{p!}\eta_k^p,\\
&\Big\|\nabla f(z^{k+1,t}) - \sum_{r=1}^p\frac{1}{(r-1)!}\nabla^r f(x^k)(z^{k+1,t}-x^k)^{r-1}\Big\|\le \frac{L_p}{p!} \|z^{k+1,t}-x^k\|^{p} =\frac{L_p\eta_k^p}{p!\gamma_{k,t}^p}\quad \forall 1\le t\le p-1.
\end{align*}
Substituting these inequalities into \cref{p-upbd-mk1-gfk1} and letting $a= \sum_{t=1}^{p-1}\theta_{k,t}/(1 - \sum_{t=1}^{p-1}\theta_{k,t})$, we obtain that
\begin{align*}
&\mathbb{E}_{\xi^{k+1}}[\|m^{k+1} - \nabla f (x^{k+1})\|^2]\\
&\le \Big(1-\sum_{t=1}^{p-1}\theta_{k,t}\Big)\|m^k - \nabla f (x^k)\|^2 + \frac{pL_p^2\eta_k^{2p}}{(p!)^2\sum_{t=1}^{p-1}\theta_{k,t}} +  \frac{pL_p^2\eta_k^{2p}}{(p!)^2\sum_{t=1}^{p-1}\theta_{k,t}} \sum_{t=1}^{p-1} \frac{\theta_{k,t}^2}{\gamma_{k,t}^{2p}} + (p-1)\sigma^2\sum_{t=1}^{p-1}\theta_{k,t}^2.
\end{align*}    
Hence, the conclusion of this lemma holds as desired.
\end{proof}

We are now ready to prove \cref{thm:pth-conv}.

\begin{proof}[{Proof of \cref{thm:pth-conv}}]
It follows from \cref{ineq:1st-desc-1}, \cref{rec:pth-m}, \cref{def:pth-Pk}, and \cref{thetak-pth} that 
\begin{align}
&\E_{\xi^{k+1}}[P_{k+1}]\overset{\cref{def:pth-Pk}}{=} \E_{\xi^{k+1}}[f(x^{k+1}) + p_{k+1}\|m^{k+1} - \nabla f (x^{k+1})\|^2]\nonumber  \\
&\overset{\cref{rec:pth-m}\cref{ineq:1st-desc-1}}{\le} f(x^k) - \eta_k \|\nabla f(x^k)\| + 2\eta_k\|\nabla f(x^k) - m^k\| + \frac{L_1}{2}\eta_k^2 + \Big(1-\sum_{t=1}^{p-1}\theta_{k,t}\Big)p_{k+1}\|m^k - \nabla f (x^k)\|^2\nonumber \\
&\qquad +  \frac{pL_p^2\eta_k^{2p}p_{k+1}}{(p!)^2\sum_{t=1}^{p-1}\theta_{k,t}} \Big(1 +\sum_{t=1}^{p-1} \frac{\theta_{k,t}^2}{\gamma_{k,t}^{2p}}\Big) + (p-1)\sigma^2p_{k+1}\sum_{t=1}^{p-1}\theta_{k,t}^2\nonumber\\
&\overset{\cref{thetak-pth}}{\le} f(x^k) - \eta_k \|\nabla f(x^k)\| + 2\eta_k\|\nabla f(x^k) - m^k\| + \frac{L_1}{2}\eta_k^2 + \Big(1-\sum_{t=1}^{p-1}\theta_{k,t}/(p+1)\Big)p_k\|m^k - \nabla f (x^k)\|^2\nonumber \\
&\qquad +  \frac{pL_p^2\eta_k^{2p}p_{k+1}}{(p!)^2\sum_{t=1}^{p-1}\theta_{k,t}} \Big(1 +\sum_{t=1}^{p-1} \frac{\theta_{k,t}^2}{\gamma_{k,t}^{2p}}\Big) + (p-1)\sigma^2p_{k+1}\sum_{t=1}^{p-1}\theta_{k,t}^2.\label{pth-pt-upbd}
\end{align}
By the Young's inequality, one has that
\begin{align*}
2\eta_k\|\nabla f(x^k) - m^k\| \le \frac{p_k\sum_{t=1}^{p-1}\theta_{k,t}}{p+1}\|\nabla f(x^k) - m^k\|^2 + \frac{(p+1)\eta_k^2}{p_k\sum_{t=1}^{p-1}\theta_{k,t}},
\end{align*}
which together with \cref{pth-pt-upbd} implies that 
\begin{align*}
&\E_{\xi^{k+1}}[P_{k+1}] \le f(x^k) + p_k \|m^k - \nabla f (x^k)\|^2 - \eta_k\|\nabla f(x^k)\| + \frac{L_1}{2}\eta_k^2 + \frac{(p+1)\eta_k^2}{p_k\sum_{t=1}^{p-1}\theta_{k,t}}\\
&\qquad\qquad\qquad\qquad  +  \frac{pL_p^2\eta_k^{2p}p_{k+1}}{(p!)^2\sum_{t=1}^{p-1}\theta_{k,t}} \Big(1 +\sum_{t=1}^{p-1} \frac{\theta_{k,t}^2}{\gamma_{k,t}^{2p}}\Big) + (p-1)\sigma^2p_{k+1}\sum_{t=1}^{p-1}\theta_{k,t}^2\\
&\overset{\cref{def:pth-Pk}}{=} P_k - \eta_k\|\nabla f(x^k)\| + \frac{L_1}{2}\eta_k^2 + \frac{(p+1)\eta_k^2}{p_k\sum_{t=1}^{p-1}\theta_{k,t}} +  \frac{pL_p^2\eta_k^{2p}p_{k+1}}{(p!)^2\sum_{t=1}^{p-1}\theta_{k,t}} \Big(1 +\sum_{t=1}^{p-1} \frac{\theta_{k,t}^2}{\gamma_{k,t}^{2p}}\Big) + (p-1)\sigma^2p_{k+1}\sum_{t=1}^{p-1}\theta_{k,t}^2.
\end{align*}
Hence, the conclusion of this theorem holds as desired.
\end{proof}

The following lemma provides some useful properties of $\{(\gamma_{k,t},\theta_{k,t})\}_{1\le t\le p-1,k\ge0}$ defined in \cref{eta-gamma-p} and \cref{theta-p}, and also provides a sequence $\{p_k\}_{k\ge0}$ such that \cref{thetak-pth} holds.
\begin{lemma}\label{lem:pth-theta-p}
Suppose that \cref{asp:pth-smth} hold. Let $\{(\gamma_{k,t},\theta_{k,t})\}_{1\le t\le p-1,k\ge0}$ be defined in \cref{eta-gamma-p} and \cref{theta-p}, and let $\{p_k\}_{k\ge0}$ be defined as 
\begin{align}\label{p-pk}
p_k = (k+p)^{(p-1)/(3p+1)}\qquad\forall k\ge0,
\end{align}
where $p$ is given in \cref{asp:pth-smth}. Then, \cref{pth-lss} holds, and it holds that 
\begin{align}
&\sum_{t=1}^{p-1}\theta_{k,t}\in\bigg[\frac{1}{2(k+p)^{2p/(3p+1)}},\frac{\ln(2p-1)}{(k+p)^{2p/(3p+1)}}\bigg]\subset(0,1)\qquad \forall k\ge0,\label{sum-theta-kt-p}\\
&\theta_{k,t}^2\le \frac{16((p-1)!)^2}{t^2(k+p)^{4p/(3p+1)}}\qquad\forall 1\le t\le p-1,k\ge0.\label{upbd-sq-theta-kt-p}
\end{align}
In addition, \cref{thetak-pth} holds.
\end{lemma}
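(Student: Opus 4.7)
The plan is to reduce everything to \cref{lem:p-sol}, which provides the closed-form solution to the Vandermonde system \cref{pth-lss} and the explicit formula \cref{sum-q-theta} for the sum of the weights. Substituting the choices $\gamma_{k,t}=1/(t(k+p)^{2p/(3p+1)})$ from \cref{eta-gamma-p} into \cref{theta-kt-explicit} and reading the resulting expression confirms that \cref{theta-p} gives precisely the unique solution of \cref{pth-lss}. Throughout, I shall write $u\doteq(k+p)^{2p/(3p+1)}$, so $1/\gamma_{k,t}=tu$.

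The two bounds on $S\doteq\sum_{t=1}^{p-1}\theta_{k,t}$ will both be read off \cref{sum-q-theta}, which here reduces to $S=1-\prod_{t=1}^{p-1}(1-1/(tu))$. For the lower bound I isolate the $t=1$ factor and use that the remaining factors lie in $(0,1)$, giving $\prod\le 1-1/u$ and hence $S\ge 1/u\ge 1/(2u)$. For the upper bound I use the standard inequality $1-\prod_{t}(1-a_t)\le\sum_t a_t$ for $a_t\in[0,1]$, yielding $S\le H_{p-1}/u$. The small wrinkle is to promote this to $S\le\ln(2p-1)/u$, i.e.\ to show $H_{p-1}\le\ln(2p-1)$ for every $p\ge 2$. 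I will prove this by induction: the base case $p=2$ reads $1\le\ln 3$, and the induction step reduces to $\ln((2p+1)/(2p-1))\ge 1/p$, which follows from the power-series identity
\[
\ln\!\Big(\frac{2p+1}{2p-1}\Big)=\ln\!\Big(\frac{1+1/(2p)}{1-1/(2p)}\Big)=2\sum_{j\ge 0}\frac{1}{(2j+1)(2p)^{2j+1}}\ge\frac{1}{p}.
\]
Containment in $(0,1)$ then follows since all factors $1-1/(tu)$ are strictly positive (as $u>1$) and strictly less than $1$.

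For \cref{upbd-sq-theta-kt-p} I work directly from \cref{theta-p}. After factoring a common $u^{p-2}$ out of the numerator and $u^{p-1}$ out of the denominator, I get
\[
|\theta_{k,t}|=\frac{\prod_{s\ne t}|1-su|}{tu^{p-1}\prod_{s\ne t}|t-s|}
\le\frac{\prod_{s\ne t}su}{tu^{p-1}(t-1)!\,(p-1-t)!}
=\frac{(p-1)!}{t^2 u\,(t-1)!(p-1-t)!},
\]
where I used $|1-su|\le su$ (valid since $su\ge u>1$) and $\prod_{s\ne t}s=(p-1)!/t$. Bounding $(t-1)!(p-1-t)!\ge 1$ gives $|\theta_{k,t}|\le(p-1)!/(tu)$, so $\theta_{k,t}^2\le((p-1)!)^2/(t^2u^2)$, which is stronger than the claimed bound with factor $16$.

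Finally, to verify \cref{thetak-pth} I rewrite it as $p_{k+1}/p_k\le (1-S/(p+1))/(1-S)=1+pS/((p+1)(1-S))$. Bernoulli's inequality (with exponent $(p-1)/(3p+1)\in(0,1)$) gives
\[
\frac{p_{k+1}}{p_k}=\Big(1+\frac{1}{k+p}\Big)^{(p-1)/(3p+1)}\le 1+\frac{p-1}{(3p+1)(k+p)},
\]
while the previously established lower bound $S\ge 1/(2u)$ together with $1-S<1$ yields $pS/((p+1)(1-S))\ge p/(2(p+1)u)$. So it suffices to check the polynomial inequality $2(p-1)(p+1)u\le p(3p+1)(k+p)$, which holds because $u=(k+p)^{2p/(3p+1)}\le k+p$ and $2(p^2-1)\le p(3p+1)$ for $p\ge 2$. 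The main obstacle in the whole argument is the harmonic-number estimate $H_{p-1}\le\ln(2p-1)$; the rest is direct substitution and Bernoulli-type inequalities.
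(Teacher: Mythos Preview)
Your proof is correct and follows essentially the same route as the paper's: reduce to \cref{lem:p-sol} for the closed form of $\theta_{k,t}$ and the sum formula \cref{sum-q-theta}, apply the Weierstrass product inequality for the upper bound on $S$, and use Bernoulli's inequality to control $p_{k+1}/p_k$. Two of your steps are actually cleaner than the paper's: isolating the $t=1$ factor immediately gives $S\ge 1/u$ (the paper takes a longer path through $(1-1/((p-1)u))^{p-1}$ to reach only $S\ge 1/(2u)$), and your estimate $|\theta_{k,t}|\le (p-1)!/(t^2u(t-1)!(p-1-t)!)$ via $|1-su|\le su$ yields a tighter constant than the paper's $4(p-1)!/(tu)$. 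One small slip: after your display you write $|\theta_{k,t}|\le (p-1)!/(tu)$, but the display itself gives $(p-1)!/(t^2u)$; the slip is harmless since it only weakens the bound. Your induction for $H_{p-1}\le\ln(2p-1)$ is equivalent to the paper's appeal to the integral estimate \cref{upbd:series-ka}.
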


\begin{proof}
Fix an arbitrary $k\ge0$. In view of \cref{lem:p-sol} and the definition of $\{(\gamma_{k,t},\theta_{k,t})\}_{1\le t\le p-1,k\ge0}$ in \cref{eta-gamma-p} and \cref{theta-p}, we observe that \cref{pth-lss} holds. We next prove \cref{sum-theta-kt-p}. Since \cref{theta-p} is the solution to \cref{pth-lss} with $q=p-1$ and $\{\gamma_{k,t}\}_{1\le t\le p-1}$ specified in \cref{eta-gamma-p}, it follows from \cref{lem:p-sol} that \cref{sum-q-theta} holds. By substituting \cref{eta-gamma-p} into \cref{sum-q-theta}, one has that
\begin{align}\label{sum-theta-gama-spec}
\sum_{t=1}^{p-1} \theta_{k,t} = 1 - \frac{\prod_{t=1}^{p-1}(t(k+p)^{2p/(3p+1)}-1)}{\prod_{t=1}^{p-1}(t(k+p)^{2p/(3p+1)})} = 1 - \prod_{t=1}^{p-1}\Big(1-\frac{1}{t(k+p)^{2p/(3p+1)}}\Big).
\end{align}
Notice that
\begin{align}
\prod_{t=1}^{p-1}\Big(1-\frac{1}{t(k+p)^{2p/(3p+1)}}\Big) \ge 1 - \frac{1}{(k+p)^{2p/(3p+1)}}\sum_{t=1}^{p-1}\frac{1}{t}\ge 1-\frac{\ln(2p-1)}{(k+p)^{2p/(3p+1)}} \ge 1- \frac{\ln(2p-1)}{p^{2p/(3p+1)}} >0,\label{lwbd-t-k-kp}
\end{align}
where the first inequality is due to the Weierstrass product inequality $\prod_{i=1}^m(1-\alpha_i)\ge1-\sum_{i=1}^m\alpha_i$ for all $\alpha_i\in(0,1)$, $1\le i\le m$, the second inequality follows from \cref{upbd:series-ka} with $(a,b,\alpha)=(1,p-1,1)$, the third inequality is due to $k\ge0$, and the last inequality is because $\ln(2p-1)<p^{2p/(3p+1)}$ for all $p\ge2$. We also notice that
\begin{align}
\prod_{t=1}^{p-1}\Big(1-\frac{1}{t(k+p)^{2p/(3p+1)}}\Big)  
& \le \Big(1-\frac{1}{(p-1)(k+p)^{2p/(3p+1)}}\Big)^{p-1}\le 1 - \frac{1}{(k+p)^{2p/(3p+1)}+1} \nonumber\\
& \le 1 - \frac{1}{2(k+p)^{2p/(3p+1)}} <1,\label{upbd-t-k-kp}
\end{align}
where the first inequality is due to $1\le t\le p-1$, the second inequality is because $(1+a)^r\le 1/(1-ra)$ for all $a\in[-1,0]$ and $r\ge0$, and the third inequality follows from $(k+p)^{2p/(3p+1)}\ge p^{2p/(3p+1)}>1$ for all $k\ge0$ and $p\ge2$. In view of \cref{sum-theta-gama-spec}, \cref{lwbd-t-k-kp}, and \cref{upbd-t-k-kp}, we see that \cref{sum-theta-kt-p} holds as desired.

We now prove \cref{upbd-sq-theta-kt-p}. It follows from \cref{theta-p} that
\begin{align}
&|\theta_{k,t}| = \frac{\prod_{s=1}^{p-1}(s(k+p)^{2p/(3p+1)}-1)}{t(k+p)^{2p/(3p+1)}(t(k+p)^{2p/(3p+1)}-1)|\prod_{1\le s\le p-1,s\neq t}((t-s)(k+p)^{2p/(3p+1)})|}\nonumber\\
&=\prod_{s=1}^{p-1}\Big(1 - \frac{1}{s(k+p)^{2p/(3p+1)}}\Big)\cdot\frac{\prod_{1\le s\le p-1,s\neq t}s}{(t(k+p)^{2p/(3p+1)}-1)|\prod_{1\le s\le p-1,s\neq t}(t - s)|}\qquad \forall 1\le t\le p-1.\label{abs-ts-}
\end{align}
In addition, since $p\ge2$, $t\ge1$, and $k\ge0$, it follows that
\begin{align*}
&\prod_{s=1}^{p-1}\Big(1 - \frac{1}{s(k+p)^{2p/(3p+1)}}\Big)\le 1,\quad \prod_{1\le s\le p-1,s\neq t}s\le (p-1)!,\quad \Big|\prod_{1\le s\le p-1,s\neq t}(t - s)\Big|\ge1,\\
&t(k+p)^{2p/(3p+1)}-1\ge t(k+p)^{2p/(3p+1)}/4\qquad\forall 1\le t\le p-1.
\end{align*}
where the last inequality is due to $t(k+p)^{2p/(3p+1)}\ge p^{2p/(3p+1)}\ge 4/3$. These along with \cref{abs-ts-} imply that
\begin{align*}
|\theta_{k,t}| \le \frac{4(p-1)!}{t(k+p)^{2p/(3p+1)}} \qquad\forall 1\le t\le p-1.    
\end{align*}
Hence, \cref{upbd-sq-theta-kt-p} holds as desired.

We finally prove \cref{thetak-pth}. Using \cref{sum-theta-kt-p}, we obtain that
\begin{align}
\frac{1-\sum_{t=1}^{p-1}\theta_{k,t}/(p+1)}{1-\sum_{t=1}^{p-1}\theta_{k,t}}& =1+\frac{p}{(p+1)(1/\sum_{t=1}^{p-1}\theta_{k,t}-1)}>1+\frac{p\sum_{t=1}^{p-1}\theta_{k,t}}{p+1}\nonumber\\
& \overset{\cref{sum-theta-kt-p}}{>} 1 + \frac{p}{2(p+1)(k+p)^{2p/(3p+1)}}.  \label{lwbd-frac-sum-theta}
\end{align}
In addition, observe from \cref{p-pk} that 
\begin{align}\label{lwbd-frac-pk}
\frac{p_{k+1}}{p_k} \le \Big( 1 + \frac{1}{k + p} \Big)^{(p-1)/(3p+1)} \le 1 + \frac{p-1}{(3p+1)(k+ p)},   
\end{align}
where the second inequality is due to $(1+a)^r\le 1+ra$ for all $a\ge0$ and $r\in[0,1]$. Combining \cref{lwbd-frac-sum-theta} and \cref{lwbd-frac-pk} with the fact that $(3p+1)/(p-1)\ge 2(p+1)/p$ and $k+p\ge (k+p)^{2p/(3p+1)}$, we obtain that \cref{thetak-pth} holds as desired.

\end{proof}

We are now ready to prove \cref{cor:rate-p}.

\begin{proof}[{Proof of \cref{cor:rate-p}}]
Recall from \cref{lem:pth-theta-p} that $\{(\eta_k,\gamma_{k,t},\theta_{k,t})\}_{1\le t\le p-1,k\ge0}$ defined in \cref{eta-gamma-p} and \cref{theta-p} along with $\{p_k\}_{k\ge0}$ defined in \cref{p-pk} satisfies the assumptions in \cref{thm:pth-conv}. Thus, \cref{pth-stat-upbd} holds. By summing \cref{pth-stat-upbd} over $k=0,\ldots, K-1$, and taking expectation with respect to $\{\xi^k\}_{k=0}^{K-1}$, we obtain that for all $K\ge1$,
\begin{align}
&\E[P_K]\le \E[P_0] - \sum_{k=0}^{K-1}\eta_k\E[\|\nabla f(x^k)\|]\nonumber\\
&\qquad\qquad + \sum_{k=0}^{K-1}\Big(\frac{L_1}{2}\eta_k^2 + \frac{(p+1)\eta_k^2}{p_k\sum_{t=1}^{p-1}\theta_{k,t}} +  \frac{pL_p^2\eta_k^{2p}p_{k+1}}{(p!)^2\sum_{t=1}^{p-1}\theta_{k,t}} \Big(1 +\sum_{t=1}^{p-1} \frac{\theta_{k,t}^2}{\gamma_{k,t}^{2p}}\Big) + (p-1)\sigma^2p_{k+1}\sum_{t=1}^{p-1}\theta_{k,t}^2\Big).\label{pth-stat-upbd-0}
\end{align}
Observe from \cref{def:pth-Pk} that $\E[P_0]=f(x^0)+p_0\E_{\xi^0}[\|G(x^0;\xi^0)-\nabla f(x^0)\|^2]\le f(x^0)+p_0\sigma^2$ and $\E[P_K]\ge f_{\mathrm{low}}$. In addition, note from \cref{p-pk} and $p\ge2$ that $p_{k+1}\le 2p_k$ for all $k\ge0$. Using these, \cref{pth-stat-upbd-0}, and the fact that $\{\eta_k\}_{k\ge0}$ is nonincreasing, we obtain that for all $K\ge1$,
\begin{align}
&f_{\mathrm{low}} \le f(x^0) + p_0\sigma^2 - \eta_{K-1}\sum_{k=0}^{K-1}\E[\|\nabla f(x^k)\|]\nonumber\\
&\qquad\qquad + \sum_{k=0}^{K-1}\Big(\frac{L_1}{2}\eta_k^2 + \frac{(p+1)\eta_k^2}{p_k\sum_{t=1}^{p-1}\theta_{k,t}} +  \frac{2pL_p^2\eta_k^{2p}p_k}{(p!)^2\sum_{t=1}^{p-1}\theta_{k,t}} \Big(1 +\sum_{t=1}^{p-1} \frac{\theta_{k,t}^2}{\gamma_{k,t}^{2p}}\Big) + 2(p-1)\sigma^2p_k\sum_{t=1}^{p-1}\theta_{k,t}^2\Big).\label{pth-stat-upbd-1}
\end{align}
Rearranging the terms of this inequality, and substituting \cref{eta-gamma-p}, \cref{p-pk}, \cref{sum-theta-kt-p}, and \cref{upbd-sq-theta-kt-p}, we obtain the following for all $K\ge1$:
\begin{align}
&\frac{1}{K}\sum_{k=1}^{K-1}\E[\|\nabla f(x^k)\|]\overset{\cref{pth-stat-upbd-1}}{\le} \frac{f(x^0) - f_{\mathrm{low}} + p_0\sigma^2}{K\eta_{K-1}} \nonumber\\
&\qquad + \frac{1}{K\eta_{K-1}}\sum_{k=0}^{K-1}\Big(\frac{L_1}{2}\eta_k^2 + \frac{(p+1)\eta_k^2}{p_k\sum_{t=1}^{p-1}\theta_{k,t}} +  \frac{2pL_p^2\eta_k^{2p}p_k}{(p!)^2\sum_{t=1}^{p-1}\theta_{k,t}} \Big(1 +\sum_{t=1}^{p-1} \frac{\theta_{k,t}^2}{\gamma_{k,t}^{2p}}\Big) + 2(p-1)\sigma^2p_k\sum_{t=1}^{p-1}\theta_{k,t}^2\Big)\nonumber\\
&\overset{\cref{eta-gamma-p}\cref{p-pk}\cref{sum-theta-kt-p}\cref{upbd-sq-theta-kt-p}}{=}\frac{(K+p-1)^{(2p+1)/(3p+1)}(f(x^0) - f_{\mathrm{low}} + p^{(p-1)/(3p+1)}\sigma^2)}{K}\nonumber\\
&\qquad + \frac{(K+p-1)^{(2p+1)/(3p+1)}}{K}\sum_{k=0}^{K-1}\Big( \frac{L_1}{2(k+p)^{(4p+2)/(3p+1)}}\nonumber\\
&\qquad + \frac{4pL_p^2}{(p!)^2(k+ p)^{(4p^2-p+1)/(3p+1)}} + \frac{2(p+1 + 32p^{2p}L_p^2 + 16(p-1)^2((p-1)!)^2\sigma^2)}{k+ p}\Big)\nonumber\\
&\le \frac{(K+p-1)^{(2p+1)/(3p+1)}}{K}\Big(f(x^0) - f_{\mathrm{low}} + p\sigma^2 + \frac{(3p+1)L_1}{2(p+1)(p-1/2)^{(p+1)/(3p+1)}}\nonumber\\
&\qquad + \frac{(3p+1)L_p^2}{(p!)^2(p-1)(p-1/2)^{(4p^2-4p)/(3p+1)}}  + 2(p+1 + 32p^{2p}L_p^2 + 16(p-1)^2((p-1)!)^2\sigma^2)\ln\Big(\frac{2K}{2p-1}+1\Big)\Big)\nonumber\\
&\le \frac{(K+p-1)^{(2p+1)/(3p+1)}}{K}\Big(f(x^0) - f_{\mathrm{low}} + p\sigma^2 + \frac{3L_1}{2(p-1/2)^{(p+1)/(3p+1)}}+ \frac{7L_p^2}{(p!)^2(p-1/2)^{(4p^2-4p)/(3p+1)}}\nonumber\\
&\qquad + 2(p+1 + 32p^{2p}L_p^2 + 16(p!)^2\sigma^2)\ln\Big(\frac{2K}{2p-1}+1\Big)\Big),\label{expect-gradf-xr-p}
\end{align}
where the third inequality follows from $\sum_{k=0}^{K-1}1/(k+p)\le \ln(2K/(2p-1)+1)$, $\sum_{k=0}^{K-1}1/(k+p)^{(4p+2)/(3p+1)}\le(3p+1)/(p+1) (p-1/2)^{-(p+1)/(3p+1)}$, and $\sum_{k=0}^{K-1}1/(k+p)^{(4p^2-p+1)/(3p+1)}\le(3p+1)/(4p^2-4p)(p-1/2)^{(4p-4p^2)/(3p+1)}$ due to \cref{upbd:series-ka} with $(a,b)=(p,K+p-1)$ and $\alpha=1,(4p+2)/(3p+1),(4p^2-p+1)/(3p+1)$, and the last inequality is due to $p\ge2$. In addition, notice that for all $K\ge 2p$,
\begin{align*}
&(K+p-1)^{(2p+1)/(3p+1)}\le (2K)^{(2p+1)/(3p+1)} \le 2 K^{(2p+1)/(3p+1)},\\ 
&\Big(p-\frac{1}{2}\Big)^{(p+1)/(3p+1)} \ge 1,\quad  \Big(p-\frac{1}{2}\Big)^{(4p^2-4p)/(3p+1)} \ge1,\\
&1< \ln\Big(\frac{2}{2p-1}+3\Big)\le\ln\Big(\frac{2K}{2p-1}+1\Big)\le\ln(2K+1) \le 2\ln K.  
\end{align*}
Substituting these inequalities into \cref{expect-gradf-xr-p}, we obtain that for all $K\ge 2p$,
\begin{align}
&\frac{1}{K}\sum_{k=1}^{K-1}\E[\|\nabla f(x^k)\|]\nonumber\\
&\le 4\Big(f(x^0) - f_{\mathrm{low}} + p\sigma^2 +\frac{3L_1}{2} + \frac{7L_p^2}{(p!)^2} + 2(p+1 + 32p^{2p}L_p^2 + 16(p!)^2\sigma^2) \Big)K^{-p/(3p+1)}\ln K\nonumber\\
&\overset{\cref{M-p}}{=}M_pK^{-p/(3p+1)}\ln K.\label{upbd-expec-gradfs-p}
\end{align}
Since $\iota_K$ is uniformly drawn from $\{0,\ldots,K-1\}$, it follows that
\begin{align}\label{upbd-expec-kappa-gradfs-p}
\E[\|\nabla f(x^{\iota_K})\|]=\frac{1}{K}\sum_{k=0}^{K-1}\E[\|\nabla f(x^k)\|]\overset{\cref{upbd-expec-gradfs-p}}{\le} M_p K^{-p/(3p+1)}\ln K\qquad\forall K\ge2p.
\end{align}
In view of \cref{lem:rate-complexity} with $(\alpha,u,v)=(p/(3p+1),p\epsilon/((6p+2)M_p),K)$, one can see that
\begin{align*}
K^{-p/(3p+1)}\ln K \le \frac{\epsilon}{M_p}\qquad\forall K\ge \Big(\frac{(6p+2)M_p}{p\epsilon}\ln\Big(\frac{(6p+2)M_p}{p\epsilon}\Big)\Big)^{(3p+1)/p},
\end{align*}
which together with \cref{upbd-expec-kappa-gradfs-p} proves \cref{expect-kappa-p}. Hence, the conclusion of this theorem holds as desired.
\end{proof}

\section{Concluding remarks}\label{sec:cr}

In this paper, we establish upper bounds for the sample and first-order operation complexity of SFOMs for finding an $\epsilon$-stochastic stationary point of problem \cref{pb:uc} assuming the Lipschitz continuity of the $p$th-order derivative for any $p\ge2$. It should be mentioned that the development of upper complexity bounds in this paper is independent of those in \cite{cutkosky2019momentum, fang2018spider} under the mean-squared smoothness condition stated in \cref{asp:ave-smt}. Indeed, the mean-squared smoothness is an assumption imposed on the stochastic gradient estimator $G(\cdot;\xi)$ rather than the function $f$ itself. In \cref{apx:ctexp}, we provide an example with discussion showing that an unbiased stochastic gradient estimator $G(\cdot;\xi)$ with bounded variance can violate \cref{asp:ave-smt} when the gradient and arbitrarily higher-order derivatives of $f$ are Lipschitz continuous.

In addition, our proposed acceleration technique, which exploits higher-order smoothness, are orthogonal to the recent developments in the complexity results of SFOMs for composite optimization and constrained optimization discussed in \cref{sec:intro}. Thus, it would be interesting to extend our techniques to develop accelerated SFOMs with complexity guarantees for more general stochastic optimization problems under higher-order smoothness, which we leave as a direction for future research.

\appendix

\section{An example concerning mean-squared smoothness}\label{apx:ctexp}

The following proposition provides a necessary and sufficient condition for the mean-squared smoothness condition in \cref{asp:ave-smt}, when the stochastic gradient estimator $G(\cdot;\xi)$ takes a special form.

\begin{proposition}\label{pro:special}
Suppose that Assumptions \ref{asp:basic}(a), (b), and \ref{asp:pth-smth} hold. Let
\begin{align}\label{exp-Gxxi}
G(x;\xi) \doteq \nabla f(x) + \xi g(x)\qquad\forall x\in\mathbb{R}^n,   
\end{align}
where $\xi$ follows the standard normal distribution with mean $0$ and variance $1$, and $g:\mathbb{R}^n\to\mathbb{R}^n$ is a mapping satisfying $\|g(x)\|\le \sigma$ for all $x\in\mathbb{R}^n$ with some $\sigma>0$. Then, $G(\cdot;\xi)$ satisfies \cref{asp:basic}(c), and moreover, \cref{asp:ave-smt} holds if and only if $g$ is Lipschitz continuous in $\R^n$.
\end{proposition}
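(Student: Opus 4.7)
The plan is to reduce everything to an elementary computation, leveraging that $\xi$ is zero-mean with unit variance, so every term linear in $\xi$ drops out in expectation while every term quadratic in $\xi$ contributes exactly its deterministic factor.

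First, I would verify Assumption \ref{asp:basic}(c) directly from \cref{exp-Gxxi}. Unbiasedness follows from $\E_\xi[G(x;\xi)] = \nabla f(x) + \E[\xi]\,g(x) = \nabla f(x)$, and the variance bound from $\E_\xi[\|G(x;\xi)-\nabla f(x)\|^2] = \E[\xi^2]\,\|g(x)\|^2 = \|g(x)\|^2 \le \sigma^2$. Neither step uses any smoothness of $f$ or $g$.

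Next, for the equivalence, I would compute the key quantity appearing in \cref{asp:ave-smt}. Writing $G(y;\xi)-G(x;\xi) = (\nabla f(y) - \nabla f(x)) + \xi(g(y) - g(x))$ and expanding the square, the cross term is proportional to $\E[\xi] = 0$, so
\[
\E_\xi[\|G(y;\xi)-G(x;\xi)\|^2] = \|\nabla f(y)-\nabla f(x)\|^2 + \|g(y)-g(x)\|^2 \qquad \forall x,y \in \R^n.
\]
From this identity both directions of the equivalence drop out. If $g$ is Lipschitz with some constant $L_g$, combine with \cref{asp:basic}(b) (which gives Lipschitzness of $\nabla f$ with constant $L_1$) to bound the right-hand side by $(L_1^2 + L_g^2)\|y-x\|^2$, yielding \cref{asp:ave-smt} with $L = \sqrt{L_1^2 + L_g^2}$. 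Conversely, if \cref{asp:ave-smt} holds with some $L$, discard the nonnegative term $\|\nabla f(y)-\nabla f(x)\|^2$ to obtain $\|g(y)-g(x)\|^2 \le L^2\|y-x\|^2$, i.e., $g$ is $L$-Lipschitz.

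There is no substantive obstacle — the whole proof is the one-line cross-term cancellation above, which is possible only because $\xi$ is a scalar multiple with $\E[\xi]=0$ and $\E[\xi^2]=1$. The Lipschitz assumption on $\nabla f$ and the bound $\|g\|\le\sigma$ are used only in the very mild places noted; Assumption \ref{asp:pth-smth} is not needed for the proposition itself but underlies the point of the example, namely that one can freely choose a non-Lipschitz $g$ (e.g.\ the $g$ used in \cref{subsec:df-pb}) to construct an unbiased, bounded-variance estimator that violates \cref{asp:ave-smt} while the underlying $f$ remains arbitrarily smooth.
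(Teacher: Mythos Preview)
Your proposal is correct and follows essentially the same approach as the paper: verify \cref{asp:basic}(c) directly from $\E[\xi]=0$ and $\E[\xi^2]=1$, then derive the identity $\E_\xi[\|G(y;\xi)-G(x;\xi)\|^2] = \|\nabla f(y)-\nabla f(x)\|^2 + \|g(y)-g(x)\|^2$ via the vanishing cross term, from which both directions of the equivalence are immediate. Your treatment is in fact slightly more explicit than the paper's (you spell out the constants for each direction and correctly note that \cref{asp:pth-smth} is not actually used in the argument).
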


\begin{proof}
Since $\xi$ follows the standard normal distribution, we have $\E[\xi]=0$ and $\E[\xi^2]=1$. It then follows from \cref{exp-Gxxi} and $\|g(x)\|\le \sigma$ for all $x\in\mathbb{R}^n$ that \cref{asp:basic}(c) holds. In addition, observe that
\begin{align*}
\mathbb{E}_\xi[\|G(y;\xi)-G(x;\xi)\|^2] & \overset{\cref{exp-Gxxi}}{=} \mathbb{E}_\xi[\|\nabla f(y)-\nabla f(x) + \xi(g(y) - 
g(x))\|^2]\nonumber\\
& = \|\nabla f(y) - \nabla f(x)\|^2 + \|g(y) - g(x)\|^2,
\end{align*}
where the second equality follows from $\E[\xi]=0$. This immediately implies that \cref{asp:ave-smt} holds if and only if $g$ is Lipschitz continuous in $\R^n$. Hence, the conclusion of this proposition holds as desired.
\end{proof}

As a consequence of \cref{pro:special}, we conclude that when $G(\cdot;\xi)$ satisfies \cref{exp-Gxxi} with $g$ being bounded but not Lipschitz continuous in $\R^n$ (e.g., $g(x)=\min\{\sqrt{\|x\|},1\}$) or even discontinuous at some points in $\R^n$, \cref{asp:basic}(c) holds, but the mean-squared smoothness condition in \cref{asp:ave-smt} is violated.


\bibliographystyle{abbrv}
\bibliography{ref}

\end{document}